\documentclass{article}

\usepackage{amsmath,amsthm,titling,enumitem,amssymb,etoolbox,mathrsfs,graphicx,xcolor,cancel,tikz-cd,hyperref,mathtools}

\hypersetup{
    colorlinks=true,
    citecolor=black,
    linkcolor=blue}

\usepackage[margin=2.5cm]{geometry}

\def\N{\mathbb{N}}
\def\Z{\mathbb{Z}}

\def\R{\mathbb{R}}

\def\tr{\operatorname{Tr}}

\def\Div{\operatorname{div}}

\def\spt{\operatorname{spt}}

\def\perm{\operatorname{Perm}}

\def\T{\mathbb{T}}

\def\@dashint#1#2{%
    {\setbox0=\hbox{$#1{#2}{\int}$ }
    \vcenter{\hbox{$#2-$ }}\kern-.9\wd0}%
}
\def\dashint{\mathchoice
    {\@dashint\displaystyle\textstyle}%
    {\@dashint\textstyle\scriptstyle}%
    {\@dashint\scriptstyle\scriptscriptstyle}%
    {\@dashint\scriptscriptstyle\scriptscriptstyle}%
    \!\int}

\newtheoremstyle{theoremstyle}
  {6pt}{15pt}
  {\itshape}
  {}
  {\bf}
  {.}
  {1em}
  {}

\newtheoremstyle{examplestyle}
  {6pt}{15pt}
  {}
  {}
  {\bf}
  {.}
  {1em}
  {}
  
\theoremstyle{theoremstyle}
\newtheorem{thm}{Theorem}[section]
\newtheorem{lemma}[thm]{Lemma}

\theoremstyle{examplestyle}

\newtheorem{remark}[thm]{Remark}

\numberwithin{equation}{section}

\date{}
\title{A Hölder estimate for the trajectories of the Navier-Stokes equations}
\author{Ming-Yuan Chang}

\begin{document}
\maketitle

\begin{abstract}
We study solutions to the Navier-Stokes equations in the class $L^\infty_t C^\alpha_x$. Landau and Lifshitz \cite{LL87} predicted that the Eulerian and Lagrangian temporal structure functions for turbulence exhibit $1/3$ and $1/2$ scaling laws, respectively. These laws were justified for the Euler equations in \cite{Ise23,Ise25}, assuming the spatial structure functions satisfy a $1/3$ scaling law. We demonstrate them in a viscous setting by proving that the $C^\alpha_{t,x}$-norm of the solution and the $C^{1/(1-\alpha)}$-norm of any fluid trajectory can be estimated by the $L^\infty_tC^\alpha_x$-norm independently of the viscosity parameter $\nu>0$, for times bounded away from zero by a positive power of $\nu$.
\end{abstract}

\section{Introduction}

\subsection{Motivation and Statement}
Consider a weak solution $u(t,x):[0,T)\times \T^d\to\R^d$ to the incompressible Navier-Stokes equations:
\begin{equation}\label{Navier-Stokes}\tag{NS}
\begin{dcases}
    \partial_tu+u\cdot\nabla u+\nabla p-\nu \Delta u=0,\\
    \Div u=0,\quad u(0,x)=u_0(x),
\end{dcases}
\end{equation}
where $\nu>0$ and $\Div u_0=0$. 

In the phenomenological theory of turbulence, specifically Kolmogorov's 1941 (K41) theory \cite{Kol41}, the famous $1/3$-law predicts that the absolute spatial structure functions satisfy:
\begin{align}
\langle |u(t,x+\ell)-u(t,x)|^p\rangle^{\frac{1}{p}}\sim \epsilon^{\frac{1}{3}}|\ell|^{\frac{1}{3}}, 
\end{align}
where $\epsilon$ is the energy dissipation rate.
Motivated by this physical prediction, it is natural to consider $L^\infty_t C^\alpha_x$ solutions to \eqref{Navier-Stokes}. In the infinite-Reynolds-number regime, the $C^{1/3}$ regularity turns out to be an important threshold for the validity of energy conservation laws for weak solutions to the Euler equations ($\nu=0$) \cite{CET94,DS12,Ise18}.

On the other hand, Landau and Lifshitz \cite[(33.7) and (33.8)]{LL87} predicted the absolute temporal structure functions from both the Eulerian and Lagrangian perspectives:
\begin{align}
\langle |u(t+\tau,x)-u(t,x)|^p\rangle^{\frac{1}{p}}\sim \epsilon^{\frac{1}{3}}|\tau|^{\frac{1}{3}},\\
\langle |u(t+\tau,X(t+\tau,a))-u(t,X(t,a))|^p\rangle^{\frac{1}{p}}\sim \epsilon^{\frac{1}{2}}|\tau|^{\frac{1}{2}},
\end{align}
where $X(t,a)$ is the Lagrangian flow generated by the vector field $u$. Note that while the temporal scaling in the Eulerian framework is also predicted to follow a $1/3$-law, the regularity is expected to improve to a $1/2$-law in Lagrangian coordinates.

For Euler equations, Isett \cite{Ise23,Ise25} justified mathematically that, if the spatial structure functions are the expected ones, then the temporal structure functions are those predicted. In particular, he demonstrated that for a weak solution $u$ to the Euler equations in the class $L^\infty_tC^\alpha_x$, $0<\alpha<1$, we have the corresponding Hölder regularity $C^\alpha_{t,x}$ in joint space-time. Also, any trajectory $x(t)$ of the solution is of class $C^{\frac{1}{1-\alpha}}$. This implies $u(t,x(t))=x^\prime(t)$ is of class $C^{\frac{\alpha}{1-\alpha}}$, which recovers the predicted exponent when $\alpha=\frac{1}{3}$.

In this paper, we consider the Navier-Stokes equations and prove analogous results in the high-Reynolds-number regime, which reflects physical flows in the presence of small viscosity. Note that the $L^\infty_t C^\alpha_x$ regularity is sufficient to bootstrap the solution to $u\in C^\infty((0,T)\times \T^d;\R^d)\cap C^0([0,T)\times\T^d;\R^d)$. However, the heat regularization generally costs the viscosity $\nu$ to some negative power, causing the estimates to blow up as $\nu\to0$. Thus, it makes sense to ask whether we can obtain the same estimates as in the Euler case \textit{independent of viscosity}.

First, on the Eulerian side, we obtain a joint Hölder bound independent of $\nu$, after the solution evolves for a while:

\begin{thm}\label{Eulerian estimate}
Let $u$ be an $L^\infty_t C^\alpha_x$ solution to \eqref{Navier-Stokes}. Let $\tilde{u}=u-e^{\nu t\Delta}u_0$ be the difference between $u$ and the free heat solution with the same initial data. Then, we have the following estimate:
\begin{align}\label{eq:Eulerian 1}
\|\tilde{u}\|_{L^\infty_x\dot{C}^\alpha_t}\le C\|u\|_{L^\infty_{t,x}}^\alpha\|u\|_{L^\infty_t\dot{C}^\alpha_x}.
\end{align}
In particular, for any $a>0$, we have
\begin{align}\label{eq:Eulerian 2}
\|u\|_{C^\alpha_{t,x}((a\nu,T)\times\T^d)}\le C(\|u\|_{L^\infty_{t,x}}^\alpha\|u\|_{L^\infty_t\dot{C}^\alpha_x}+\|u\|_{L^\infty_tC^\alpha_x}+a^{-\alpha/2}\|u_0\|_{\dot{C}^\alpha_x}).
\end{align}
The constant $C=C_{\alpha,d}$ is independent of $\nu>0$ and $a>0$.
\end{thm}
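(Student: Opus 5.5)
We will write $\tilde u$ via the Duhamel formula. Since $u$ solves \eqref{Navier-Stokes}, we have
\begin{align*}
\tilde u(t)=-\int_0^t e^{\nu(t-s)\Delta}\mathbb{P}\Div(u\otimes u)(s)\,ds,
\end{align*}
where $\mathbb{P}$ is the Leray projector. Equivalently, $\tilde u$ solves $\partial_t\tilde u-\nu\Delta\tilde u=-\mathbb{P}\Div(u\otimes u)$ with $\tilde u(0)=0$.

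To estimate $|\tilde u(t+\tau,x)-\tilde u(t,x)|$, we follow Isett's mollification strategy. Fix a length scale $\ell>0$ and a standard mollifier $\eta_\ell$ in space, and split
\begin{align*}
\tilde u=\tilde u_\ell+(\tilde u-\tilde u_\ell),\qquad \tilde u_\ell=\eta_\ell*\tilde u.
\end{align*}
The high-frequency part is controlled by spatial regularity: $\|\tilde u-\tilde u_\ell\|_{L^\infty}\lesssim \ell^\alpha\|\tilde u\|_{L^\infty_t\dot C^\alpha_x}$. Here one needs $\|\tilde u\|_{\dot C^\alpha_x}\lesssim\|u\|_{\dot C^\alpha_x}$, which holds because $\tilde u=u-e^{\nu t\Delta}u_0$ and the heat semigroup contracts $\dot C^\alpha$.

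For the low-frequency part we estimate $\|\partial_t\tilde u_\ell\|_{L^\infty}$ using the equation:
\begin{align*}
\partial_t\tilde u_\ell=-\mathbb{P}\Div(u\otimes u)_\ell+\nu\Delta\tilde u_\ell .
\end{align*}
\begin{itemize}
\item The nonlinear term is handled by the commutator trick of Constantin--E--Titi. Since $\mathbb{P}\Div(u_\ell\otimes u_\ell)$ is a derivative of a $C^\alpha$-type quantity, we write $(u\otimes u)_\ell=u_\ell\otimes u_\ell+R_\ell$ with $\|R_\ell\|\lesssim\ell^{2\alpha}\|u\|_{\dot C^\alpha}^2$. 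Alternatively, and more simply, we use $\|\Div(u\otimes u)_\ell\|\lesssim \ell^{\alpha-1}\|u\|_{L^\infty}\|u\|_{\dot C^\alpha}$, coming from the difference $u\otimes u-(u\otimes u)(x)$. The Leray projector is not bounded on $L^\infty$, so we work instead in Hölder spaces: we bound $\|\mathbb{P}\Div(u\otimes u)_\ell\|_{C^{\epsilon}}$ via Schauder-type estimates for $\mathbb{P}$ on $C^\gamma$. This gives $\|\partial_t\tilde u_\ell+\nu(\cdots)\|\lesssim \ell^{\alpha-1}\|u\|_{L^\infty}\|u\|_{\dot C^\alpha}$.
\item The viscous term is the obstacle, because $\nu\Delta\tilde u_\ell\sim\nu\ell^{\alpha-2}$ is not uniform in $\nu$. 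To avoid it, we do not differentiate naively. Instead we compare $\tilde u(t+\tau)$ with $e^{\nu\tau\Delta}\tilde u(t)$ via Duhamel on $[t,t+\tau]$:
\begin{align*}
\tilde u(t+\tau)-\tilde u(t)=\bigl(e^{\nu\tau\Delta}-1\bigr)\tilde u(t)-\int_t^{t+\tau}e^{\nu(t+\tau-s)\Delta}\mathbb{P}\Div(u\otimes u)\,ds .
\end{align*}
The integral is handled exactly as above with mollification at scale $\ell$: the heat semigroup commutes with mollification and is an $L^\infty$/$C^\gamma$ contraction. This yields $\tau\ell^{\alpha-1}\|u\|_\infty\|u\|_{\dot C^\alpha}+\ell^\alpha\|u\|_{\dot C^\alpha}$. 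The unmollified remainder contributes $\ell^\alpha$ in oscillation after writing $\Div(u\otimes u)=\Div((u-u(x))\otimes(u-u(x)))+\dots$. This is the delicate step, and we expect to use the mixed bound with one factor $L^\infty$ and one factor $\dot C^\alpha$.
\end{itemize}

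The first term of the Duhamel identity, $(e^{\nu\tau\Delta}-1)\tilde u(t)$, is the real problem: it is only bounded by $(\nu\tau)^{\alpha/2}\|\tilde u\|_{\dot C^\alpha}$. That bound is harmless when $\nu\tau\lesssim\tau^2\|u\|_\infty^2$, but in general it is not of the form we want. The fix is to bound this term using the equation for $\tilde u$ itself rather than its Hölder norm: $\tilde u(t)$ is a Duhamel integral, and $(e^{\nu\tau\Delta}-1)e^{\nu(t-s)\Delta}$ gains from the time integration. Concretely,
\begin{align*}
\int_0^t (e^{\nu\tau\Delta}-1)e^{\nu(t-s)\Delta}F\,ds=\int_0^t\bigl(e^{\nu(t+\tau-s)\Delta}-e^{\nu(t-s)\Delta}\bigr)F\,ds,
\end{align*}
which telescopes in time. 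After the change of variables $s\mapsto s-\tau$, it reduces to an integral over a window of length $\tau$ (plus a boundary piece near $s=0$), again estimable by $\tau\ell^{\alpha-1}+\ell^\alpha$. This is precisely the Duhamel rewriting that makes the final estimate $\nu$-independent. Optimizing with $\ell=\tau\|u\|_{L^\infty}$ gives \eqref{eq:Eulerian 1}.

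For \eqref{eq:Eulerian 2}, write $u=\tilde u+e^{\nu t\Delta}u_0$. Spatial Hölder regularity of $u$ is given by hypothesis. The time regularity of $e^{\nu t\Delta}u_0$ for $t\ge a\nu$ follows from
\begin{align*}
\|\partial_te^{\nu t\Delta}u_0\|_{L^\infty}\lesssim \nu(\nu t)^{\alpha/2-1}\|u_0\|_{\dot C^\alpha}.
\end{align*}
Interpolating this with the trivial $\|u_0\|_{\dot C^\alpha}(\nu\tau)^{\alpha/2}$-type bound gives a $\tau^\alpha$ modulus with constant $\lesssim a^{-\alpha/2}\|u_0\|_{\dot C^\alpha}$. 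The $L^\infty$ part is bounded by $\|u\|_{L^\infty_tC^\alpha_x}$.
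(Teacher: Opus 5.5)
Your overall architecture agrees with the paper's. You split $\tilde u$ at scale $\ell$, control the high part by $\ell^\alpha\|\tilde u\|_{\dot{C}^\alpha_x}\lesssim\ell^\alpha\|u\|_{L^\infty_t\dot{C}^\alpha_x}$, show the low part moves at rate $\ell^{\alpha-1}\|u\|_{L^\infty}\|u\|_{\dot{C}^\alpha}$, and take $\ell=\tau\|u\|_{L^\infty}$. Your argument for \eqref{eq:Eulerian 2}, interpolating $\|\nu\Delta e^{\nu t\Delta}u_0\|_{L^\infty}\lesssim\nu(\nu t)^{\alpha/2-1}\|u_0\|_{\dot{C}^\alpha}$ against $(\nu\tau)^{\alpha/2}\|u_0\|_{\dot{C}^\alpha}$, is a correct substitute for Lemma \ref{Parabolic estimate 2}.

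The gap is exactly the step you flag as the real problem: the viscous contribution $(e^{\nu\tau\Delta}-1)\tilde u(t)$, equivalently a $\nu$-uniform bound on $\tau\|\nu\Delta\tilde u_\ell(t)\|_{L^\infty}$. Your telescoping does not happen, because $F=-\mathbb{P}\Div(u\otimes u)$ depends on time. The substitution $s\mapsto s+\tau$ shifts the time argument of $F$, and what you actually get (for $\tau\le t$) is
\begin{align*}
\int_0^t\bigl(e^{\nu(t+\tau-s)\Delta}-e^{\nu(t-s)\Delta}\bigr)F(s)\,ds
&=\int_0^{t-\tau}e^{\nu(t-s)\Delta}\bigl(F(s+\tau)-F(s)\bigr)\,ds\\
&\quad+\int_{-\tau}^{0}e^{\nu(t-s)\Delta}F(s+\tau)\,ds-\int_{t-\tau}^{t}e^{\nu(t-s)\Delta}F(s)\,ds.
\end{align*}
The two windows of length $\tau$ are harmless after mollification. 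The first term, however, involves time increments of $u\otimes u$ integrated over an interval of length $\sim t$. Bounding it is circular, since time regularity is what you are proving, and it carries no factor $\tau$. The reduction to windows of length $\tau$ is valid only for time-independent forcing.

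What is needed is a parabolic (maximal-regularity) estimate that is uniform in $\nu$ and exploits the structure of the forcing inside the Duhamel integral. The paper does this on dyadic Littlewood--Paley pieces. Since $\int_0^\infty\|\nu\Delta e^{\nu\sigma\Delta}P_k\|_{L^\infty\to L^\infty}\,d\sigma\lesssim1$, Lemma \ref{Parabolic estimate} gives
\[
\|\partial_tP_k\tilde u\|_{L^\infty}\lesssim\|P_kF\|_{L^\infty}\lesssim 2^{(1-\alpha)k}\|u\|_{L^\infty}\|u\|_{\dot{C}^\alpha}.
\]
Summing over $2^k\lesssim\ell^{-1}$ then gives the $\ell^{\alpha-1}$ rate for the whole low part, viscous term included. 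In your mollifier language, you could instead integrate in $\sigma$ the kernel of $\nu\Delta e^{\nu\sigma\Delta}\mathbb{P}\Div\eta_\ell$, which annihilates constants, against the Hölder modulus of $u\otimes u$. Merely combining $\|F_\ell\|_{L^\infty}\lesssim\ell^{\alpha-1}\|u\|_{L^\infty}\|u\|_{\dot{C}^\alpha}$ with $\int\|\nu\Delta e^{\nu\sigma\Delta}\eta_\ell*\|_{L^\infty\to L^\infty}\,d\sigma$ costs up to a factor $\log(1/\ell)$.

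A secondary point: apply the Duhamel identity on $[t,t+\tau]$ only to $\tilde u_\ell$, rather than to $\tilde u$ with the forcing split inside the integral. The high-frequency forcing integrated over a window of length $\tau$ has dyadic pieces of size $\tau 2^{(1-\alpha)k}\|u\|_{L^\infty}\|u\|_{\dot{C}^\alpha}$, which do not sum uniformly in $\nu$. So your claim that the unmollified remainder contributes $\ell^\alpha$ cannot be proved that way. The $\ell^\alpha$ must come from $\|\tilde u-\tilde u_\ell\|_{L^\infty}$ at the two endpoint times, as in your initial split.
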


Note that the pure heat term $||e^{\nu t\Delta}u_0||_{C^\alpha_{t,x}}$ cannot be estimated independently of $\nu$ up to time zero if $u_0\in C^\alpha$ only, while the difference $\tilde{u}$ satisfies such an estimate. This implies that the joint $C^\alpha_{t,x}$ estimate for $u$ up to initial time is exactly obstructed by the heat term.
Since we are mainly concerned with the case where $\nu$ is small, we impose the assumption $t\gtrsim\nu$. Furthermore, we place a free parameter $a>0$ and give a blow-up rate estimate as $t\downarrow 0$.

The main theorem of this paper is the following Hölder estimate for the trajectories. After waiting some time, we can bound the $C^{\frac{1}{1-\alpha}}$-norm of any trajectory by the $L^{\infty}_t\dot{C}^\alpha_x$-norm of $u$ (when $\frac{1}{1-\alpha}\in\Z$ we need to introduce a logarithmic correction):

\begin{thm}\label{Lagrangian estimate}
Let $u$ be an $L^\infty_tC^\alpha_x$ solution to \eqref{Navier-Stokes}, and let $m\ge0$ be the unique integer such that $m<\frac{1}{1-\alpha}\le m+1$. Then, for any $a>0$ and any $t_1,t_2\ge a\|u\|_{L^\infty_t\dot{C}^\alpha_x}^{\frac{-2}{1+\alpha}}\nu^{\frac{1-\alpha}{1+\alpha}}$, the following estimates hold:
\begin{enumerate}[label=(\roman*), font=\upshape]
    \item If $\frac{1}{1-\alpha}\not\in\Z$ and $\frac{1}{1-\alpha}=m+\beta$,
    \begin{align}
    |x^{(m)}(t_1)-x^{(m)}(t_2)|\le C(a^{-m}+1)\|u\|^{\frac{1}{1-\alpha}}_{L^\infty_t\dot{C}^\alpha_x}|t_1-t_2|^{\beta}.
    \end{align}
    \item If $\frac{1}{1-\alpha}\in\Z$ and $\frac{1}{1-\alpha}=m+1$,
    \begin{align}
    |x^{(m)}(t_1)-x^{(m)}(t_2)|\le C(a^{-m}+1)\|u\|^{\frac{1}{1-\alpha}}_{L^\infty_t\dot{C}^\alpha_x}|t_1-t_2|(1-\log^-(\|u\|_{L^\infty_t\dot{C}^\alpha_x}|t_1-t_2|)),
    \end{align}
    where $\log^-(t)=\min\{\log(t),0\}$.
\end{enumerate}
The constants $C=C_{\alpha,d}$ are independent of $\nu>0$ and $a>0$.

\end{thm}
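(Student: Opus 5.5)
The approach is to follow Isett's Euler argument and treat viscosity as an error that becomes harmless once $t\gtrsim \tau_\nu:=\|u\|_{L^\infty_t\dot C^\alpha_x}^{-2/(1+\alpha)}\nu^{(1-\alpha)/(1+\alpha)}$. By scaling, $u\mapsto \lambda u(\lambda t,x)$ with $\nu$ unchanged in form after rescaling space, we normalize $\|u\|_{L^\infty_t\dot C^\alpha_x}=1$. Then $\tau_\nu=\nu^{(1-\alpha)/(1+\alpha)}$ is exactly the time at which the dissipation length $\ell_\nu=\nu^{1/(1+\alpha)}$ (where $\nu\ell^{-2}\cdot \ell = \ell^{\alpha}$, i.e.\ viscous and advective rates balance) has eddy turnover time $\ell_\nu^{1-\alpha}$.

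The core construction is a Lagrangian mollification at scale $\ell$ adapted to the time gap $\tau=|t_1-t_2|$. Choose $\ell=\tau^{1/(1-\alpha)}$, the natural coupling in which a trajectory moves $\tau\|u\|_{L^\infty}$, but relative displacements at scale $\ell$ grow only over times of order $\ell^{1-\alpha}$. Set $u_\ell=u*\phi_\ell$. Along the trajectory, $x'=u(t,x)=u_\ell(t,x)+O(\ell^\alpha)$, and we compute higher derivatives of $u_\ell(t,x(t))$ via the material derivative $D_t=\partial_t+u\cdot\nabla$. The mollified equation gives
\[
D_t u_\ell=-\nabla p_\ell+\nu\Delta u_\ell-\nabla\cdot R_\ell+(u_\ell-u)\cdot\nabla u_\ell ,
\]
where $R_\ell=(u\otimes u)_\ell-u_\ell\otimes u_\ell$ is the commutator, with $\|R_\ell\|_{C^k}\lesssim \ell^{2\alpha-k}$ and $\|\nabla p_\ell\|_{C^k}\lesssim \ell^{2\alpha-1-k}$ by the standard commutator and Schauder estimates for $-\Delta p=\nabla\nabla:(u\otimes u)$. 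Iterating, the $k$-th material derivative of $u_\ell$ is bounded by $\ell^{\alpha-k(1-\alpha)}$ (each $D_t$ costs $\ell^{\alpha-1}$), provided the viscous terms obey the same bound. This is where $t\ge a\tau_\nu$ enters. Using Theorem~\ref{Eulerian estimate}, or directly parabolic smoothing, $\nu\Delta u_\ell$ and its material derivatives cost $\nu\ell^{-2}$ per derivative instead of $\ell^{\alpha-1}$. This is acceptable when $\ell\ge\ell_\nu$. When $\ell<\ell_\nu$, we exploit smoothing on time $t\gtrsim a\tau_\nu$: $u(t)$ is effectively smooth below scale $\ell_\nu$, with $\|\nabla^k u(t)\|\lesssim a^{-?}\ell_\nu^{\alpha-k}$. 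We derive this by Duhamel from the heat semigroup over the window $[t-a\tau_\nu,t]$, with nonlinear terms handled by the $C^\alpha$ bounds. The loss is a factor $a^{-1}$ per derivative, which gives the constant $a^{-m}+1$.

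To assemble the estimate, write $x^{(m)}(t)=D_t^{m-1}u(t,x(t))$ and Taylor expand. The difference $x^{(m)}(t_1)-x^{(m)}(t_2)$ splits into $(D_t^{m-1}u-D_t^{m-1}u_\ell)$, which is $O(\ell^{\alpha-(m-1)(1-\alpha)})$ by the same commutator bounds, plus $\tau\sup|D_t^m u_\ell|\lesssim \tau\ell^{\alpha-m(1-\alpha)}$. With $\ell^{1-\alpha}=\tau$ both terms equal $\tau^{(\alpha-(m-1)(1-\alpha))/(1-\alpha)}=\tau^{1/(1-\alpha)-m}=\tau^\beta$. In the integer case the exponent of the mollification error vanishes at the last step, so we instead sum dyadically over scales $\ell_j=2^{-j}$ from $\ell\sim\tau^{1/(1-\alpha)}$ up to order one. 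This produces the factor $1+|\log\tau|$, and for $\tau\gtrsim1$ we simply use the $L^\infty$ bound.

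The main obstacle is the rigorous control of $D_t^k$ of the viscous term and of $u-u_\ell$ in the regime $\ell<\ell_\nu$ with constants uniform in $\nu$. I would first establish a lemma giving $\|\nabla^k u(t)\|_{L^\infty}\lesssim_k (1+a^{-k/(1+\alpha)\cdot\ldots})\,\ell_\nu^{\alpha-k}$ for $t\ge a\tau_\nu$, via a local-in-time iteration of heat smoothing. The crude derivative bookkeeping yields the claimed $a^{-m}$ power, and with this lemma in hand all mixed material derivatives are estimated uniformly.
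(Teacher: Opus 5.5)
\textbf{The gap is in the step you yourself flag as the main obstacle, and the method you propose for it does not work.} Eulerian Duhamel for the heat semigroup, with the nonlinearity bounded through $C^\alpha$, cannot produce smoothing at the dissipation scale $\ell_\nu=\nu^{1/(1+\alpha)}$.

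The frequency-$2^k$ part of $\nabla\cdot(u\otimes u)$ contains the sweeping term $P_{<k}u\cdot\nabla P_{\approx k}u$. Its size is $2^{(1-\alpha)k}\|u\|_{L^\infty}$, not $2^{(1-2\alpha)k}$. After your normalization and a Galilean shift one has $\|u\|_{L^\infty}\lesssim 1$, but the power of $2^k$ is what matters.

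\begin{itemize}
\item Balancing this against the damping rate $\nu 2^{2k}$ gives at best $\|P_ku(t)\|_{L^\infty}\lesssim \nu^{-1}2^{-(1+\alpha)k}$.
\item That improves on the trivial bound $2^{-\alpha k}$ only when $2^{-k}\lesssim \nu$, not when $2^{-k}\lesssim\ell_\nu$.
\item So in the whole range $\nu\ll\ell\ll\ell_\nu$ your method only bounds $\nu\Delta u_\ell$ by $\nu\ell^{\alpha-2}=(\ell_\nu/\ell)^{1+\alpha}\ell^{2\alpha-1}$.
\item Hence $\tau\|D_tu_\ell\|_{L^\infty}$ exceeds $\tau^\beta$ by a factor up to $\nu^{-\alpha}$.
\item Iterating does not help, because the sweeping term really is this large in the Eulerian frame.
\end{itemize}

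This Duhamel bound is exactly the one behind Theorem \ref{Eulerian estimate}. That is why that theorem carries $\|u\|_{L^\infty}$ and reaches only the Eulerian exponent. Your lemma $\|\nabla^k u(t)\|_{L^\infty}\lesssim \ell_\nu^{\alpha-k}$ is therefore not a routine iteration of heat smoothing. It is the heart of the problem, and the paper does not prove or need it in that form.

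\textbf{What is needed instead.} The transport must stay in the operator. $P_ku$ solves
$\partial_tP_ku+P_{\le k}u\cdot\nabla P_ku-\nu\Delta P_ku=F_k$ with $\|F_k\|_{L^\infty}\lesssim(1+\delta)^{(1-2\alpha)k}$. The right balance, $\nu(1+\delta)^{2k}\|P_ku\|\sim\|F_k\|$, gives
$\|P_ku(t)\|_{L^\infty}\lesssim(a^{-1}+1)\nu^{-1}(1+\delta)^{-(1+2\alpha)k}$ for $t\ge a\tau_\nu$. This makes $\nu\Delta P_ku$ exactly comparable to the Euler forcing, scale by scale.

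However, this equation has no semigroup, and the maximum principle alone only says the sup does not grow. Exponential damping needs a lower bound $-\Delta f\cdot f\gtrsim R^2|f|^2$ at maximum points. That bound fails for dyadic annuli (e.g.\ $\frac18(9\cos x-\cos 3x)$) and fails even for thin annuli when $d\ge 3$. The missing idea is the paper's:
\begin{itemize}
\item in $d\le 2$, the $(1+\delta)$-adic decomposition together with Lemma \ref{Thin annulus lemma};
\item in $d\ge3$, the auxiliary functions $f_j=\lambda^{-2j}(-\Delta-\mu^2)^jf_0$, with $\mu$ just outside the frequency support, combined with Lemma \ref{lem:generalized thin annulus lemma} and the product $Q=\prod_j\|f_j\|_{L^\infty}$;
\item alternatively, the $L^p$ energy route using $\Div u=0$.
\end{itemize}
For higher $m$, the same argument must be run on $P_{[k-2,k+2]}D^{m+1}_{\le k,t}P_ku$, which requires the commutator bounds \eqref{M.4} and \eqref{M.5}. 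Your statement that all mixed material derivatives then follow uniformly skips this step.

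\textbf{Two smaller issues.}
\begin{itemize}
\item With the full-velocity derivative $D_t=\partial_t+u\cdot\nabla$, already $D_t^2u_\ell$ contains $D_t(u-u_\ell)$, hence $D_tu=-\nabla p+\nu\Delta u$ of the rough field. That is circular. Isett's coarse flow $D_{\le k,t}$ and the increment bounds \eqref{M.2} exist precisely to avoid it.
\item The $a$-dependence is left undetermined, so the factor $a^{-m}$ is asserted rather than derived.
\end{itemize}
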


The theorem is easier to prove in the regime $0<\alpha\le \frac{1}{2}$, which contains the case $\alpha=\frac{1}{3}$. See Theorem \ref{Lagrangian estimate less than one half} for the simplified statement.

To provide some heuristics, let $\ell$ be a spatial scale and $\tau$ be a time scale. $\tau_d=\nu^{-1}\ell^{2}$ is the time scale at which the diffusion starts to affect the scale $\ell$ significantly. The eddy turnover time, $\tau_e=\|u\|_{L^\infty_t\dot{C}^\alpha_x}^{-1}\ell^{1-\alpha}$, is the time scale at which the energy starts to cascade significantly to small scales. When $\tau_{e}\ll \tau_d$, diffusion is negligible and the estimates are the same as those for the Euler equations. When $\tau_e\gtrsim \tau_d$, we reach the \textit{dissipation scales} $\ell\lesssim \|u\|_{L^\infty_t\dot{C}^\alpha_x}^{\frac{-1}{1+\alpha}}\nu^{\frac{1}{1+\alpha}}$, and the two effects balance at times $\tau\gtrsim \|u\|_{L^\infty_t\dot{C}^\alpha_x}^{\frac{-2}{1+\alpha}}\nu^{\frac{1-\alpha}{1+\alpha}}$, which is exactly the scale stated in Theorem \ref{Lagrangian estimate}.

Finally, we briefly mention some related results for other hydrodynamic equations. Colombo and De Rosa \cite{CDR20} established the joint $C^\alpha_{t,x}$ estimate for the hypodissipative Navier-Stokes involving $(-\Delta)^{\gamma}$, $0<\gamma<\frac{1}{2}$. Similarly, Wang, Mei, and Liu \cite{WML23} proved an analogous result for the Surface Quasi-Geostrophic equation. We also refer to \cite{CVW15} for other improved regularity results for the Euler equations in a Lagrangian setting.

\subsection{Strategy for the proof}
Let us briefly explain the strategy of the proof. The proof is based on the estimates for the Euler equations established in \cite{Ise23,Ise25}. Compared to the Euler equations, we have an additional diffusion term $\nu\Delta u$, which needs to be taken into consideration at dissipation scales. 

However, applying directly the Hölder assumption, we can only estimate the Littlewood-Paley piece by $\|\nu\Delta P_ku\|_{L^\infty}\lesssim \nu 2^{(2-\alpha)k}\|u\|_{\dot{C}^\alpha}$. On the other hand, $\|P_{\le k}u\cdot\nabla P_ku\|_{L^\infty}\lesssim 2^{(1-\alpha)k}\|u\|_{\dot{C}^\alpha}^2$. Thus, for a fixed $\nu>0$ and sufficiently large $k$, it seems that the term $\nu\Delta P_ku$ has larger magnitude over all the terms appearing in the estimates for the Euler equations. To obtain estimates that align with the non-viscous case, one must utilize the parabolic structure of the equation. 

The Eulerian estimate is easier. From Duhamel's formula (Lemma \ref{Parabolic estimate}), one can bound it by the same bound for the drift term $P_{\le k}u\cdot\nabla P_ku$ above, which suffices for the Eulerian $C^\alpha_{t,x}$ estimate.

The improved regularity of trajectories relies on absorbing the large transport term into the material derivative to obtain a better estimate, so Duhamel's estimate is insufficient in this case. Instead, we observe that the transport term itself tends to reorganize the solution without increasing the norm, suggesting that the dissipative effect on the norms remains largely unhindered. Therefore, we allow the time evolution to damp the high frequency modes, so that the term $\nu\Delta u$ becomes comparable to other terms. 

However, it is not immediately obvious how to exploit the diffusion effect in the $L^\infty$ norm. For the standard heat equation, we can estimate $\|e^{t\Delta}P_kf\|_{L^\infty}\le Ce^{-c2^{2k}t}\|P_kf\|_{L^\infty}$, where $P_k$ denotes the usual dyadic Littlewood-Paley projection. On the other hand, the maximum principle for a transport-diffusion equation only guarantees that the $L^\infty$ norm is non-increasing in general. It turns out that, in dimension $2$, this problem can be solved by simply replacing the dyadic decomposition by a $(1+\delta)$-adic decomposition, for $\delta>0$ small enough. Heuristically, bounded functions with Fourier support in a thin annulus behave like those with support on a sphere, which, in dimension 2, are eigenfunctions of the Laplacian. The following statement captures this intuition, acting as an illustrative proxy for the precise technical lemma (Lemma \ref{Thin annulus lemma}) used later.

\begin{lemma}\label{Thin annulus decay}
Let $d=1,2$

For any $\epsilon>0$, there exists a $\delta=\delta(\epsilon,d)>0$ depending only on $\epsilon$ and $d$ such that the following holds:

For any $R>0$ and any $f\in L^p(\T^d;\R^m)$ or $L^p(\R^d;\R^m)$, $1\le p\le\infty$ with $\spt\hat{f}\subset\{\xi:R(1+\delta)^{-1}<|\xi|< R(1+\delta)\}$, we have the following bound for the solution to the free heat equation $e^{t\Delta}f$:
\begin{align}\label{eq:thin annulus decay}
\|e^{t\Delta}f\|_{L^p}\le e^{-(1-\epsilon)R^2t}\|f\|_{L^p}.
\end{align}
\end{lemma}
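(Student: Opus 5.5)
The plan is to factor out the exact decay rate $e^{-R^2t}$ and to show that what remains is the exponential of a \emph{small} bounded operator on $L^p$. Scale so that $R=1$: replace $f$ by $f(\cdot/R)$ and $t$ by $R^2t$. On $\T^d$ this moves us to a rescaled torus, which the transference step at the end handles. On the Fourier side, for $\xi$ in the annulus $A_\delta=\{(1+\delta)^{-1}<|\xi|<1+\delta\}$ we write
\begin{align*}
e^{-t|\xi|^2}=e^{-t}\,e^{-t\psi(\xi)},\qquad \psi(\xi):=\eta\Bigl(\tfrac{|\xi|-1}{\delta}\Bigr)\bigl(|\xi|^2-1\bigr),
\end{align*}
where $\eta\in C^\infty_c(-3,3)$ with $\eta\equiv1$ on $[-2,2]$. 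Then $\eta\equiv1$ on $A_\delta$ for $\delta\le1$, so the identity is exact there. Let $T$ be the Fourier multiplier with symbol $\psi$. Since $\spt\hat f\subset A_\delta$, we get exactly $e^{t\Delta}f=e^{-t}e^{-tT}f$. The exponential series $e^{-tT}=\sum_n(-tT)^n/n!$ converges in operator norm, so
\begin{align*}
\|e^{t\Delta}f\|_{L^p}\le e^{-t}e^{t\|T\|_{L^p\to L^p}}\|f\|_{L^p}.
\end{align*}
Hence it suffices to show $\|T\|_{L^p\to L^p}\le \|\check\psi\|_{L^1(\R^d)}\le\epsilon$ for $\delta$ small. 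The first inequality holds for all $1\le p\le\infty$ by Young's inequality. This also handles small times, where the trivial contraction bound $\|e^{t\Delta}f\|_{L^p}\le\|f\|_{L^p}$ is not enough.

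The main step is the kernel estimate $\|\check\psi\|_{L^1}\lesssim \delta^{1-\frac{d-1}{2}}$. This is where $d\le2$ enters. On the support of $\psi$ we have $|\xi|^2-1=(|\xi|-1)(|\xi|+1)$, so
\begin{align*}
\psi(\xi)=\delta\,\tilde\eta\Bigl(\tfrac{|\xi|-1}{\delta}\Bigr)(|\xi|+1),\qquad \tilde\eta(r)=r\eta(r).
\end{align*}
Thus $\psi$ is $\delta$ times a smooth radial bump adapted to an annulus of width $\delta$, multiplied by a harmless smooth factor.

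In $d=1$ this is $\delta$ times the sum of two modulated, $\delta$-dilated bumps near $\pm1$. The $L^1$ norm of the kernel is invariant under modulation and dilation, so it is $O(\delta)$. Multiplying by the smooth factor $(|\xi|+1)$, restricted to a fixed compact set, costs only a constant.

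In $d=2$ I would use the standard bound $\|\mathcal F^{-1}[g((|\xi|-1)/\delta)]\|_{L^1}\lesssim\delta^{-\frac{d-1}{2}}$ for a smooth radial annular bump. The cleanest route is to decompose the annulus into $\sim\delta^{-(d-1)/2}$ pieces by a smooth angular partition of unity. The pieces are $\delta\times\delta^{1/2}$ sectors, each essentially a rectangle after a harmless curvature correction, and each has kernel $L^1$ norm $O(1)$ by integration by parts in adapted coordinates. Summing the pieces gives $\delta^{-1/2}$. Alternatively, one can interpolate $L^2$ bounds on $(1+|x|^2)^N\check\psi$ via Cauchy--Schwarz on the region $|x|\lesssim\delta^{-1}$, where the kernel is concentrated.

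Either way $\|\check\psi\|_{L^1}\lesssim\delta^{1/2}$, and choosing $\delta\sim\epsilon^2$ gives $\le\epsilon$. Verifying this decomposition carefully, including uniformity in the curvature terms, is the step I expect to be the main obstacle. For $d\ge3$ the same bound gives $\delta^{1-(d-1)/2}$, which does not become small, consistent with the restriction $d=1,2$.

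Finally, on $\T^d$ (after rescaling, a torus of period $2\pi R$) the multiplier $\psi$ acts by convolution with the periodization of $\check\psi$. That periodization has $L^1$ norm at most $\|\check\psi\|_{L^1(\R^d)}$, so the same operator bound holds, and the argument above transfers verbatim. Vector-valued $f$ is handled componentwise, since $T$ is scalar: convolution with an $L^1$ kernel is bounded on $L^p(\,\cdot\,;\R^m)$ with the same norm, for any norm on $\R^m$. This gives \eqref{eq:thin annulus decay}.
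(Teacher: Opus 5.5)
Your proof is correct, and it uses the same key estimate as the paper. The paper's multiplier $T_\delta$ has symbol $(|\xi|^2-1)\varphi(\frac{|\xi|-1}{\delta})$, which is your $\psi$, and it bounds $\|K_\delta\|_{L^1}\lesssim\delta^{(3-d)/2}$ by the weighted Cauchy--Schwarz argument, your second option for $d=2$.

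The difference is in how this bound is turned into decay. The paper first converts it into an infinitesimal statement. From $\|(-\Delta-1)f\|\le\epsilon\|f\|$ it gets $-\Delta f\cdot f\ge(1-\epsilon)R^2|f|^2$ at points where $|f|$ is maximal (Lemma~\ref{Thin annulus lemma}), and $\int-\Delta f\cdot f|f|^{p-2}\ge(1-\epsilon)R^2\int|f|^p$ (Lemma~\ref{Instantaneous dissipation rate}). It then integrates a differential inequality along the heat flow, using that the flow preserves the Fourier support. For $p=\infty$ this is Hamilton's maximum-principle trick, after mollifying so that the supremum is attained. For $p<\infty$ it is the $L^p$ energy identity.

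You instead factor the semigroup exactly on the annulus, $e^{t\Delta}=e^{-R^2t}e^{-tT}$, and use $\|e^{-tT}\|\le e^{t\|T\|}$. Your route is shorter. It treats all $1\le p\le\infty$ at once and avoids the technicalities of attained maxima, the a.e.\ differentiability of $t\mapsto\|f(t)\|_{L^\infty}$, and differentiating $\int|f|^p$ when $p$ is near $1$.

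What the paper's route buys is the maximum-point inequality itself, which is what is actually used later. The Littlewood--Paley pieces of Navier--Stokes evolve with a drift and a forcing, so their evolution is not a Fourier-multiplier semigroup and no exact factorization is available. The inequality at maximum points, by contrast, combines directly with the maximum principle there.

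A minor wording point: vector-valued $f$ is not literally handled componentwise, which would cost a constant. The correct justification is the one you then give, Minkowski's integral inequality for convolution with a scalar $L^1$ kernel.
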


Note that we do not have a constant $C$ in front of the estimate, and this is equivalent to an instantaneous dissipation on the $L^\infty$ norm of the $(1+\delta)$-adic pieces, which is what we utilize to obtain a damping on the $L^\infty$ norm via the maximum principle.

In dimension $\ge 3$, this statement is no longer true, and we provide a substitute lemma (Lemma \ref{lem:generalized thin annulus lemma}) that works well with our problem.

We also present our first argument for the theorem in Section \ref{sec:alternative approach}, which relies on $L^p$-energy estimates. Since $\Div u=0$, the transport term only redistributes the $L^p$-norm without increasing it, allowing us to obtain the $L^\infty$ estimate as a limit of $L^p$ estimates.

\subsection{Structure of the Paper}

In Section \ref{sec:Eulerian estimate}, we will introduce the $(1+\delta)$-adic Littlewood-Paley projections, and prove the Eulerian estimate (Theorem \ref{Eulerian estimate}). The proof depends on several parabolic estimates, whose proofs are given in the Appendix. The freedom for varying the base number $(1+\delta)$ is not important for the Eulerian estimate, but it will become important in the Lagrangian estimate.

In Section \ref{sec:thin annulus lemma}, we will prove the key technical lemma for the proof in dimension $2$: the \textit{thin annulus lemma}. It demonstrates the instataneous dissipation of bounded functions with Fourier support in a thin annulus, and we will apply it to the $(1+\delta)$-adic projections of the solution. Strictly speaking, the Lagrangian estimate only requires Lemma \ref{Thin annulus lemma}, but we will complete the whole story with the proof of Theorem \ref{Thin annulus decay}. Then, we will provide a generalization of this lemma (Lemma \ref{lem:generalized thin annulus lemma}) to tackle the case $d\ge 3$.

In Section \ref{sec:Lagrangian estimate}, we will prove the Lagrangian estimate in the range of $0<\alpha\le\frac{1}{2}$ (Theorem \ref{Lagrangian estimate less than one half}), which contains the critical $C^{\frac{1}{3}}$ case. The proof closely follows the scheme of \cite{Ise23,Ise25} using Littlewood-Paley projections, with an additional maximum principle. We will prove the result first in dimension 2, and then in dimension 3. It will then become clear why we want to consider Littlewood-Paley projections to thin annuli as in the previous section. The proof for the case $\alpha>\frac{1}{2}$ adds technical complications to the argument and is left to the next section, while the most relevant exponent $\frac{1}{3}$ stays in the range where the proof is easier.

In Section \ref{sec:higher derivatives}, we will set up an induction scheme of estimates (Theorem \ref{Main lemma}) needed for the part $\frac{1}{2}<\alpha<1$ left in Theorem \ref{Lagrangian estimate} on estimating higher material derivatives. We will finish the entire proof in the final Section \ref{sec:proof of main lemma}. The main argument is the same as in the previous case, but there are a few new technical estimates, including commuting material derivatives with convolution operators. The main lemma in \cite{Ise23} is replaced by Theorem \ref{Main lemma}, and we provide some explicit commutator formulas that accommodate general situations. In particular, it will be clear that the proof for the Navier-Stokes equations is parallel to the proof for the Euler case.

\section{Eulerian Estimate}\label{sec:Eulerian estimate}
In this section, we prove Theorem \ref{Eulerian estimate} on the joint $C^\alpha_{t,x}$ bound, which we call the Eulerian estimate.

\subsection{Littlewood-Paley projections}

First, we briefly introduce the $(1+\delta)$-adic Littlewood-Paley decomposition on torus, which is defined similarly as the usual dyadic ones, with some constant bounds depending on $\delta$. 

We define the $(1+\delta)$-adic Littlewood-Paley projections $P_k$ on $\T^d$: Take a smooth cut-off function $m_0(\xi)\in C^\infty(\R^d)$ with $\spt m_0\subset\{(1+\delta)^{-1}< |\xi|< (1+\delta)\}$ and $m_0(-\xi)=m_0(\xi)$, satisfying
\[\sum_{k\in\Z}m_k(\xi)=1,\]
where $m_k(\xi)=m_0(\frac{\xi}{(1+\delta)^k})$.
For a function $f:\T^d\to\R$, we view it as a $(2\pi\Z)^d$-periodic function on $\R^d$ and define
\begin{align*}
P_kf(x)=\int_{\R^d}\varphi_k(y)f(x-y)\,dy=\int_{\T^d}(\sum_{n\in\Z^d}\varphi_k(y+n))f(x-y)\,dy,
\end{align*}
where $\hat{\varphi}_k=m_k$.

Denote $P_{\le k}f=\sum_{l\le k}P_lf+|\T^d|^{-1}\int_{\T^d}f$. We also include the zero mode that is undetected by the operator $P_k$.
Also define $P_{>k}=Id-P_{\le k}$ and $P_{[k_1,k_2]}=\sum_{k_1\le k\le k_2}P_k$. 

Note that $\spt\widehat{P_kf}\subset\{\xi\in\Z^d:(1+\delta)^{k-1}<|\xi|< (1+\delta)^{k+1}\}$, so that derivatives cost roughly a factor of $(1+\delta)^{k}$. To be precise, placing derivatives on the convolution kernel $\sum_{k-2\le k^\prime\le k+2}\varphi_{k^\prime}$ yields the following estimates:
\[\|\nabla^cP_kf\|_{L^\infty}=\|\nabla^cP_{[k-2,k+2]}P_kf\|_{L^\infty}\lesssim_{c,\delta} (1+\delta)^{ck}\|P_kf\|_{L^\infty},\]
for all $c\ge0$. The symbol $A\lesssim B$ means $A\le CB$ with a constant $C$ independent of $k,f$.

The Littlewood-Paley projections are particularly useful for handling Hölder estimates. For example, we have the following norm equivalence:
\begin{align}\label{eq:Hölder characterization}
\|f\|_{\dot{C}^\alpha}\sim \sup_{k\in\Z}(1+\delta)^{\alpha k}\|P_kf\|_{L^\infty},
\end{align}
where $0<\alpha<1$. In particular, we obtain the estimate $\|P_ku\|_{L^\infty}\lesssim_{\delta} (1+\delta)^{-\alpha k}\|u\|_{\dot{C}^\alpha}$. Furthermore, we can bound
\[\|\nabla P_{\le k}u\|_{L^\infty}\le\sum_{l\le k}\|\nabla P_lu\|_{L^\infty}\lesssim\sum_{l\le k}(1+\delta)^{(1-\alpha)l}\|u\|_{\dot{C}^\alpha}\lesssim (1+\delta)^{(1-\alpha)k}\|u\|_{\dot{C}^\alpha}.\]

Another useful observation is the product formula:
\begin{align}
P_k(fg)=\sum_{k^\prime,k^{\prime\prime}}P_k(P_{k^\prime}fP_{k^{\prime\prime}}g)=\sum_{(k^\prime,k^{\prime\prime})\in HH\cup HL\cup LH}P_k(P_{k^\prime}fP_{k^{\prime\prime}}g),
\end{align}
where $HH=\{(k^\prime,k^{\prime\prime}):k^\prime,k^{\prime\prime}\ge k, |k^\prime-k^{\prime\prime}|\le 2+\frac{\log 2}{\log(1+\delta)}\}$, $HL=\{(k^\prime,k^{\prime\prime}):|k^\prime-k|\le 2+\frac{\log 2}{\log(1+\delta)},k^{\prime\prime}\le k\}$, and $LH=\{(k^\prime,k^{\prime\prime}):|k^{\prime\prime}-k|\le 2+\frac{\log 2}{\log(1+\delta)},k^{\prime}\le k\}$.

\subsection{The Proof}

Now, we can start to derive the Eulerian estimate. In fact, we consider a slightly general class of equations, as the Eulerian estimate only depends on this clear structure:
\begin{align}
\partial_t u-\nu\Delta u=T\nabla(u\otimes u),
\end{align}
where $T:\mathscr{S}(\R^n;(\R^{n})^3)\to\mathscr{S}(\R^n;\R^n)$ is a Fourier (matrix) multiplier satisfying the bound $\|P_kT\|_{L^\infty\to L^\infty}\le C$ independent of $k$. We take $P_k$ to be the usual dyadic Littlewood-Paley projection ($1+\delta=2$ in the definition), as we don't need $\delta$ small in this part. In particular, Hörmander-Mikhlin type condition for $T$ ensures this bound for $P_kT$. In the case of Navier-Stokes equations, the operator is defined by
\begin{align}
T^i(A_{c}^{ab})=-A^{ai}_a-(-\Delta)^{-1}\partial_a\partial_bA^{ab}_i=-\partial_a(u^au^i)-(-\Delta)^{-1}\partial_a\partial_b\partial_i(u^au^b),
\end{align}
where $(\nabla(u\otimes u))^{ab}_c=\partial_c(u^au^b)$.

We can estimate the following term:
\[\|P_kT\nabla(u\otimes u)\|_{L^\infty}=\|P_{[k-2,k+2]}TP_k\nabla(u\otimes u)\|_{L^\infty}\lesssim 2^k\|P_k(u\otimes u)\|_{L^\infty}.\]
From the frequency interactions in the product, we obtain
\[P_k(u\otimes u)=P_k(P_{\ge k-3}u\otimes u+P_{<k-3}u\otimes P_{[k-3,k+3]}u).\]
In each term, at least one factor is supported at high frequency, which yields the estimate
\begin{align}
\|P_kT\nabla(u\otimes u)\|_{L^\infty}\lesssim 2^{(1-\alpha)k}\|u\|_{L^\infty}\|u\|_{\dot{C}^\alpha}.
\end{align}

Let $\tilde{u}=u-e^{\nu t\Delta}u_0$, which solves the equation:
\begin{equation}
\begin{dcases}
    (\partial_t-\nu \Delta)\tilde{u}=T\nabla(u\otimes u),\\
    \tilde{u}(0,x)=0.
\end{dcases}
\end{equation}
By a parabolic estimate (Lemma \ref{Parabolic estimate}), we obtain
\begin{align}
\|\partial_tP_k\tilde{u}\|_{L^\infty_{t,x}}\lesssim\|P_kT\nabla(u\otimes u)\|_{L^\infty_{t,x}}\lesssim2^{(1-\alpha)k}\|u\|_{L^\infty_{t,x}}\|u\|_{L^\infty_t\dot{C}^\alpha_{x}}.
\end{align}
We can now finish the proof of Theorem \ref{Eulerian estimate}. For any $t\ge0$ and $h>0$, we have
\begin{equation}
\begin{split}
|\tilde{u}(t+h,x)-\tilde{u}(t,x)|
&\le\sum_{k=-\infty}^{\infty}|P_k\tilde{u}(t+h,x)-P_k\tilde{u}(t,x)|\\
&\le\sum_{k=-\infty}^{K}\|\partial_tP_k\tilde{u}\|_{L^\infty_{t,x}}|h|+2\sum_{k=K}^{\infty}\|P_k\tilde{u}\|_{L^\infty_{t,x}}\\
&\lesssim\sum_{k=-\infty}^{K}2^{(1-\alpha)k}\|u\|_{L^\infty_{t,x}}\|u\|_{L^\infty_t\dot{C}^\alpha_x}|h|+\sum_{k=K}^{\infty}2^{-\alpha k}\|u\|_{L^\infty_{t}\dot{C}^\alpha_x}\\
&\lesssim (2^{(1-\alpha)K}\|u\|_{L^\infty_{t,x}}|h|+2^{-\alpha K})\|u\|_{L^\infty_{t}\dot{C}^\alpha_x}.
\end{split}
\end{equation}
Take $2^K\sim (\|u\|_{L^\infty_{t,x}}|h|)^{-1}$ (one of the closest dyadic numbers) to balance the two terms, we finally get 
\begin{align}
|\tilde{u}(t+h,x)-\tilde{u}(t,x)|\le C|h|^\alpha\|u\|_{L^\infty_{t,x}}^\alpha\|u\|_{L^\infty_t\dot{C}^\alpha_x}.  
\end{align}
This completes the proof of the first estimate \eqref{eq:Eulerian 1}. For the second estimate \eqref{eq:Eulerian 2}, we use some parabolic estimates (Lemma \ref{Parabolic estimate 2}): for a fixed $t\ge0$ and $h>0$,
\begin{align}
\|e^{\nu (t+h)\Delta}u_0-e^{\nu t\Delta}u_0\|_{L^\infty_x}\lesssim\|e^{\nu t\Delta}u_0\|_{\dot{C}^{2\alpha}_{*}}\cdot(\nu|h|)^\alpha\lesssim\|u_0\|_{\dot{C}^\alpha_x}\cdot(\nu t)^{-\alpha/2}\cdot(\nu|h|)^\alpha,
\end{align}
where the Hölder-Zygumnd space $\dot{C}^{\beta}_*$ is defined by
\[\|f\|_{\dot{C}^\beta_{*}}:=\sup_{k\in\Z}2^{\beta k}\|P_kf\|_{L^\infty}\]
for all $\beta>0$. In particular, this coincides with the usual Hölder norm when $\beta\not\in\N$.

We conclude the estimate
\begin{align}
\|e^{\nu t\Delta}u_0\|_{L^\infty_x\dot{C}^\alpha_t((a\nu,T)\times \T^d)}\le C\nu^{\alpha/2}(a\nu)^{-\alpha/2}\|u_0\|_{\dot{C}^\alpha}=Ca^{-\alpha/2}\|u_0\|_{\dot{C}^\alpha}.
\end{align}

\section{Thin Annulus Lemma}\label{sec:thin annulus lemma}

In this section, we establish estimates for bounded functions whose Fourier support is localized to a thin annulus.
In particular, we are interested in obtaining a positive lower bound of $-\Delta f\cdot f$ at the points where $|f|$ achieves its supremum, which helps us make use of the dissipative structure of the equation. The key idea here is that a function with frequency support in a thin annulus should, in a certain sense, be close to one with support on a sphere.

In dimension $2$, the functions look like an eigenfunction of the Laplacian. The thin annulus lemma (Lemma \ref{Thin annulus lemma}), which demonstrates a $\Delta$-eigenfunction behavior of such functions at maximum points, will lead us to a ``quantitative'' strong maximum principle in the proof of Theorem \ref{Lagrangian estimate}. 

For dimension $d\ge3$, the lemma is no longer true, but there is a replacement (Lemma \ref{lem:generalized thin annulus lemma}), which works well with Navier-Stokes equations. 

The $L^p$ analogue (Theorem \ref{Thin annulus decay} and \ref{Instantaneous dissipation rate}) of instantaneous dissipation of the heat equation completes the heuristic and helps us set up another approach (Section \ref{sec:alternative approach}).

\subsection{Dimension 1 and 2}

When $d=1,2$, any $f\in L^\infty(\R^d)$ with $\spt\hat{f}\subset S^{d-1}$ satisfies $(-\Delta-1)f=0$. We present a proof based on kernel analysis, a technique we will employ several times throughout the paper. Consider the Fourier multiplier
\begin{align}
\widehat{T_\delta f}=(|\xi|^2-1)\varphi(\frac{|\xi|-1}{\delta})\hat{f},
\end{align}
where $\varphi\in C^\infty_c(\R)$ is a cut-off with $\varphi\equiv 1$ near $0$. We claim the following bound:
\begin{equation}
\|T_\delta\|_{L^\infty\to L^\infty}\lesssim
\begin{dcases}
    \delta &, d=1,\\
    \delta^{1/2} &, d=2.
\end{dcases}
\end{equation}
Since $(-\Delta-1)f=T_{\delta}f$ for $\spt\hat{f}\subset S^{d-1}$, the statement follows directly from the claim by taking $\delta\to0$. To see the bound, let the kernel of this operator be $K_\delta$, we can estimate
\begin{align}
\|x^\alpha K_\delta\|_{L^2}=\|\partial_\xi^\alpha \big((|\xi|^2-1)\varphi(\frac{|\xi|-1}{\delta})\big)\|_{L^2}\lesssim \delta^{3/2-|\alpha|}
\end{align}
for any multiindex $\alpha$ and $\delta\le 1$. We thus obtain
\begin{align}\label{eq:multiplier calculation}
\|K_\delta\|_{L^1}\le \|(1+|\delta x|^2)^lK_\delta\|_{L^2}\cdot\|(1+|\delta x|^2)^{-l}\|_{L^2}\lesssim \delta^{(3-d)/2}
\end{align}
by picking any fixed $l>\frac{d}{4}$.
From this proof, it turns out that a perturbative statement holds true. For every $\epsilon>0$, there exists a $\delta>0$ such that for every $\spt \hat{f}\subset B_\delta(S^{d-1})$, 
\[\|(-\Delta-1)f\|_{L^\infty}\le \epsilon\|f\|_{L^\infty}.\]
In particular, if $x$ is a point such that $|f|(x)=\|f\|_{L^\infty}$, then
\[-\Delta f\cdot f(x)\ge |f|^2(x)-\epsilon\|f\|_{L^\infty}^2=(1-\epsilon)|f|^2(x).\]
From the rescaling $f(\frac{x}{R})$, we conclude the following thin annulus lemma:

\begin{lemma}\label{Thin annulus lemma}
Let $d=1,2$.

For any $\epsilon>0$, there exists a $\delta>0$, depending only on $\epsilon,d$ such that the following statement holds:

Given any $R>0$, any bounded function $f:\R^d\to\R^m$ whose Fourier transform is supported in the thin annulus: $\spt\hat{f}\subset\{\xi:R(1+\delta)^{-1}<|\xi|< R(1+\delta)\}$, and any point $x\in \R^d$ attaining the sup: $|f(x)|=\|f\|_{L^\infty}$, we have the lower bound
\begin{align}
-\Delta f(x)\cdot f(x)\ge (1-\epsilon)R^2|f|^2(x).
\end{align}
\end{lemma}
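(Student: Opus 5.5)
The plan is to reduce to $R=1$ by scaling and then make the kernel argument sketched just before the statement quantitative. Given $f$ with $\spt\hat f\subset\{R(1+\delta)^{-1}<|\xi|<R(1+\delta)\}$, set $g(x)=f(x/R)$. Then $\spt\hat g\subset\{(1+\delta)^{-1}<|\xi|<1+\delta\}$ and $\|g\|_{L^\infty}=\|f\|_{L^\infty}$. The sup is attained at $Rx$, and $\Delta g(Rx)=R^{-2}\Delta f(x)$. So the claimed inequality for $f$ at $x$ is equivalent to
\[-\Delta g(Rx)\cdot g(Rx)\ge(1-\epsilon)|g(Rx)|^2,\]
and it suffices to treat $R=1$.

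For $R=1$, fix a cutoff $\varphi\in C_c^\infty(\R)$ with $\varphi\equiv1$ on $[-1,1]$ and support in $[-2,2]$, and consider the multiplier $T_\eta$ with symbol $(|\xi|^2-1)\varphi((|\xi|-1)/\eta)$. If $\delta\le\eta/2$, then $(1+\delta)^{-1}>1-\eta$ and $1+\delta<1+\eta$, so the annulus lies where the cutoff equals $1$. Hence $(-\Delta-1)g=T_\eta g$; this is applied componentwise, since $g$ is $\R^m$-valued. From the claimed bound $\|T_\eta\|_{L^\infty\to L^\infty}\lesssim\eta^{(3-d)/2}$, we get
\[\|(-\Delta-1)g\|_{L^\infty}\le C_d\,\eta^{(3-d)/2}\|g\|_{L^\infty},\]
with a constant that absorbs the at most $\sqrt m$ loss from passing to the vector norm. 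Choosing $\eta$ so that this constant is at most $\epsilon$, and then $\delta=\eta/2$, the conclusion follows at a point $x_0$ with $|g(x_0)|=\|g\|_{L^\infty}$ by Cauchy--Schwarz:
\[-\Delta g\cdot g(x_0)=|g(x_0)|^2+\bigl((-\Delta-1)g\bigr)\cdot g(x_0)\ge|g(x_0)|^2-\epsilon\|g\|_{L^\infty}^2=(1-\epsilon)|g(x_0)|^2.\]
Note that $g$ is smooth (indeed analytic) because its Fourier support is compact, so the pointwise Laplacian makes sense.

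The main work is the kernel bound $\|K_\eta\|_{L^1}\lesssim\eta^{(3-d)/2}$, since $\|T_\eta\|_{L^\infty\to L^\infty}\le\|K_\eta\|_{L^1}$. The symbol is supported in a shell of width about $\eta$. On that shell $|\xi|^2-1=O(\eta)$, and each $\xi$-derivative costs a factor $\eta^{-1}$: derivatives falling on $|\xi|^2-1$ only improve matters, and $|\xi|$ is smooth near the unit sphere. This gives $\|\partial_\xi^\beta m_\eta\|_{L^2}\lesssim\eta\cdot\eta^{-|\beta|}\cdot\eta^{1/2}$. By Plancherel, $\|(1+|\eta x|^2)^lK_\eta\|_{L^2}\lesssim\eta^{3/2}$. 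Then Cauchy--Schwarz gives
\[\|K_\eta\|_{L^1}\le\eta^{3/2}\,\|(1+|\eta x|^2)^{-l}\|_{L^2}\sim\eta^{3/2}\eta^{-d/2}\]
for $l>d/4$. This yields the power $\eta^{1/2}$ when $d=2$ and $\eta$ when $d=1$, both tending to zero.

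The main obstacle is only bookkeeping: checking that the derivative bounds are uniform for $\eta\le1$. Strictly, one should take $\eta\le1/4$ so the shell stays away from the origin, where $|\xi|$ fails to be smooth. On the torus, or for bounded $f$ on $\R^d$ whose $\hat f$ is only a tempered distribution, the identity $(-\Delta-1)g=K_\eta*g$ still holds because $K_\eta\in L^1$ is Schwartz and the multiplier equals $|\xi|^2-1$ on a neighborhood of $\spt\hat g$. Periodic functions are covered by the same convolution on $\R^d$.
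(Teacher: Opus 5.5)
Your proposal is correct and is essentially the paper's argument. It uses the same multiplier $(|\xi|^2-1)\varphi\bigl((|\xi|-1)/\eta\bigr)$, the same weighted-$L^2$ plus Cauchy--Schwarz bound $\|K_\eta\|_{L^1}\lesssim\eta^{(3-d)/2}$, the same evaluation of $-\Delta g\cdot g$ at a maximum point, and the same rescaling.

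One small correction: there is no $\sqrt m$ loss to absorb. $K_\eta$ is a real scalar $L^1$ kernel, so Minkowski's inequality gives $|K_\eta*g(x)|\le\|K_\eta\|_{L^1}\|g\|_{L^\infty}$ directly in the Euclidean norm on $\R^m$. This matters, because it is what makes $\delta$ depend only on $\epsilon$ and $d$, as the statement requires.
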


For the purpose of our proof in 2d, we will fix a $\delta$ for $\epsilon=\frac{1}{2}$.

We present the proof of equivalence between Lemma \ref{Thin annulus lemma} and the $L^\infty$ case of Theorem \ref{Thin annulus decay}. It will provide some important insights for the generalization in higher dimensions.

Note that Lemma \ref{Thin annulus lemma} is actually the instantaneous dissipation rate of $L^\infty$-norm for heat equation. To see this, we invoke the well-known maximum principle trick, also known as Hamilton's trick (\cite{Ham86}, Lemma 3.5).

First, we convolute $\hat{f}$ with a mollifier and assume $f$ vanishes at infinity. It suffices to prove Theorem \ref{Thin annulus decay} under this assumption. Let $f(t,x)=e^{t\Delta}f(x)$. The maximum principle says that $\|f(t,\cdot)\|_{C^0}$ is a locally Lipschitz function in time, and for a.e. $t$ where it is differentiable, we have
\begin{align}
\frac{d}{dt}\frac{1}{2}\|f(t,\cdot)\|_{C^0}^2= \frac{\partial f}{\partial t}(t,x)\cdot f(t,x)\bigg|_{x:|f|(t,x)=\|f(t,\cdot)\|_{C^0}}
\end{align}
Since $f$ vanishes at infinity, for each time $t$, there exists point $x\in\R^d$ with $|f|(t,x)=\|f(t,\cdot)\|_{C^0}$. Assume $f\neq 0$, and say $x\in\R^d$ is a point such that $|f|(t,x)=\|f(t,\cdot)\|_{C^0}>0$, then by Lemma \ref{Thin annulus lemma},
\begin{align*}
\frac{\partial f}{\partial t}(t,x)\cdot f(t,x)=\Delta f(t,x)\cdot f(t,x)\le -(1-\epsilon)R^2|f|^2(t,x)=-(1-\epsilon)R^2\|f(t,\cdot)\|_{C^0}^2.
\end{align*}
This yields
\begin{align}
\frac{d}{dt}\|e^{t\Delta}f\|_{C^0}\le -(1-\epsilon)R^2\|e^{t\Delta}f\|_{C^0},
\end{align}
for a.e. $t$, and thus conclude theorem \ref{Thin annulus decay}. Conversely, for a function $f$ in $L^\infty(\R^d;\R^m)$ or $L^\infty(\T^d;\R^m)$ satisfying the assumptions and a point $x$ with $|f|(x)=\|f\|_{L^\infty}(t)$,
\begin{align}
\frac{|e^{t\Delta}f|^2(x)-|f|^2(x)}{t}\le\frac{e^{-2(1-\epsilon)R^2t}-1}{t}|f|^2(x).
\end{align}
Taking $t\to0^+$ yields Lemma \ref{Thin annulus lemma}.

A final observation is that, bounding $\|K_\delta\|_{L^1}$ also bounds the operator norm $\|T_{\delta}\|_{L^p\to L^p}$ uniformly. Thus, we can bound for $\spt\hat{f}\subset B_{\delta}(S^{d-1})$:
\[|\int(-\Delta-1)f\cdot f|f|^{p-2}|\le \epsilon \|f\|_{L^p}^p.\]
This yields the following theorem on instantaneous dissipation rate, which again is equivalent to Theorem \ref{Thin annulus decay} for the cases $p<\infty$:

\begin{lemma}\label{Instantaneous dissipation rate}
Let $d=1,2$.

For any $\epsilon>0$, there exists a $\delta>0$, depending only on $\epsilon,d$ such that the following statement holds:

Given any $R>0$, and any function $f$ in $L^p(\R^d;\R^m)$ or $L^p(\T^d;\R^m)$ with Fourier transform supported in the thin annulus: $\spt\hat{f}\subset\{\xi:R(1+\delta)^{-1}\le|\xi|\le R(1+\delta)\}$, we have the lower bound

\begin{align}
\int -\Delta f\cdot f|f|^{p-2}\ge (1-\epsilon)R^2\int|f|^p.
\end{align}
\end{lemma}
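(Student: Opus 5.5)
By rescaling $f(x/R)$ (or on the torus, viewing $f$ as a periodic function on $\R^d$ at frequency scale $R$) we may assume $R=1$. It then suffices to show that for $\spt\hat f\subset B_\delta(S^{d-1})$ (with $\delta$ small compared to $1$)
\[
\Big|\int(-\Delta-1)f\cdot f|f|^{p-2}\Big|\le \epsilon\|f\|_{L^p}^p .
\]
Indeed, writing $-\Delta f=f+(-\Delta-1)f$ gives
\[
\int -\Delta f\cdot f|f|^{p-2}=\int|f|^p+\int(-\Delta-1)f\cdot f|f|^{p-2}\ge(1-\epsilon)\int|f|^p .
\]

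The plan is to reuse the multiplier $T_\delta$ from the proof of Lemma \ref{Thin annulus lemma}. Choose the cut-off $\varphi\equiv1$ on a neighbourhood of $[-2,2]$, say. Then the symbol $(|\xi|^2-1)\varphi(\frac{|\xi|-1}{\delta})$ agrees with $|\xi|^2-1$ on the support of $\hat f$, and hence $(-\Delta-1)f=T_\delta f=K_\delta*f$.

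By \eqref{eq:multiplier calculation}, $\|K_\delta\|_{L^1}\lesssim\delta^{(3-d)/2}$, which tends to $0$ as $\delta\to0$ for $d=1,2$. Young's inequality then gives
\[
\|T_\delta f\|_{L^p}\le\|K_\delta\|_{L^1}\|f\|_{L^p}
\]
for every $1\le p\le\infty$. On $\T^d$ the same bound holds with the periodized kernel, whose $L^1(\T^d)$ norm is at most $\|K_\delta\|_{L^1(\R^d)}$. Hölder's inequality with exponents $p$ and $p'=p/(p-1)$, using $\||f|^{p-1}\|_{L^{p'}}=\|f\|_{L^p}^{p-1}$, yields
\[
\Big|\int T_\delta f\cdot f|f|^{p-2}\Big|\le C\delta^{(3-d)/2}\|f\|_{L^p}^p .
\]
We then choose $\delta$ so that $C\delta^{(3-d)/2}\le\epsilon$. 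For $p=1$ the weight $f|f|^{-1}$ is interpreted as a bounded unit vector field, and the same estimate holds.

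The main obstacle is justifying that the integrals make sense, since $\Delta f$ need not obviously lie in $L^p$. The Fourier support of $f$ is compact, so $f$ is smooth. Moreover $\Delta f=\Delta\psi* f$ for a Schwartz $\psi$ with $\hat\psi\equiv1$ near the annulus, so $\Delta f\in L^p$ and the pairing with $f|f|^{p-2}\in L^{p'}$ is finite. A second point is that the annulus $\{R(1+\delta)^{-1}\le|\xi|\le R(1+\delta)\}$ lies within $B_{2\delta R}(RS^{d-1})$. It therefore fits in the region where the cut-off equals $1$, after adjusting constants in the choice of $\delta$.
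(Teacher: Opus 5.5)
Your proposal is correct and matches the paper's argument. The paper likewise notes that the bound $\|K_\delta\|_{L^1}\lesssim\delta^{(3-d)/2}$ controls $T_\delta$ uniformly on every $L^p$. From this and H\"older it gets $\bigl|\int(-\Delta-1)f\cdot f|f|^{p-2}\bigr|\le\epsilon\|f\|_{L^p}^p$ for $\spt\hat f\subset B_\delta(S^{d-1})$, and then writes $-\Delta f=f+(-\Delta-1)f$ and rescales, exactly as you do.
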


We are going to use this $p$-th norm estimate in the second proof of Theorem \ref{Lagrangian estimate}.

\subsection{Replacement in \texorpdfstring{$d\ge 3$}{d3}}\label{sec:generalized thin annulus lemma}
If we try to prove the same result in $d\ge 3$, the proof breaks down. Our bound \eqref{eq:multiplier calculation} for the operator $T_{\delta}$ does not shrink as we shrink $\delta>0$. It turns out that this is a real obstruction, since the Fourier transform of a $L^\infty$ functions can contain derivatives of measures. For example, in 3d, we have
\begin{align}
\widehat{\partial_r\,d\sigma}(x)=\sqrt{\frac{2}{\pi}}(-\cos|x|+\frac{\sin|x|}{|x|})=:f\in L^\infty(\R^3),
\end{align}
where $\,d\sigma$ is the sphere measure on $S^2$. $f$ is not a eigenfunction of the laplacian. Instead, it satisfies $(-\Delta-1)^2f=0$. 

More generally, by analyzing the multiplier $(|\xi|^2-1)^l\varphi(\frac{|\xi|-1}{\delta})$ similarly as in \eqref{eq:multiplier calculation}, we obtain the following statement:
\begin{lemma}\label{lem:iterated laplacian}
Let $j_0(d)=\lfloor{\frac{d-1}{2}}\rfloor$.
For every $f\in L^\infty(\R^d)$ with $\spt\hat{f}\subset S^{d-1}$, we know that
\begin{align}
(-\Delta-1)^{j_0(d)+1}f=0.
\end{align}
\end{lemma}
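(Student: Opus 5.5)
Write $j_0=j_0(d)$. I will mimic the kernel argument that gave \eqref{eq:multiplier calculation}, now applied to the multiplier
\[
m_\delta(\xi)=(|\xi|^2-1)^{j_0+1}\varphi\Big(\frac{|\xi|-1}{\delta}\Big),
\]
with $\varphi\in C^\infty_c(\R)$ and $\varphi\equiv1$ near $0$ as before. Let $T^{(j_0+1)}_\delta$ be the associated operator and $K_\delta$ its kernel. The plan has two steps.

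\emph{Step 1 (reduction to an operator bound).} For $f\in L^\infty(\R^d)$ with $\spt\hat f\subset S^{d-1}$ we have $m_\delta\equiv(|\xi|^2-1)^{j_0+1}$ on a neighbourhood of $\spt\hat f$. Hence, in the sense of tempered distributions,
\[
(-\Delta-1)^{j_0+1}f=T^{(j_0+1)}_\delta f\qquad\text{for every }\delta\in(0,1].
\]
To justify this, pair with a Schwartz test function. Since $\hat f$ is a compactly supported distribution, the identity $(|\xi|^2-1)^{j_0+1}\hat f=m_\delta\hat f$ holds because the two smooth multipliers agree near the support. It therefore suffices to show $\|K_\delta\|_{L^1}\to0$ as $\delta\to0$. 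Then $\|(-\Delta-1)^{j_0+1}f\|_{L^\infty}\le\|K_\delta\|_{L^1}\|f\|_{L^\infty}\to0$, which forces $(-\Delta-1)^{j_0+1}f=0$.

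\emph{Step 2 (kernel estimate).} On the support of $m_\delta$ we have $|\xi|^2-1=O(\delta)$, and each $\xi$-derivative costs at most a factor $\delta^{-1}$: it either falls on the cutoff, or on a factor $(|\xi|^2-1)$, which lowers the power of $\delta$ by one. So $|\partial^\beta_\xi m_\delta|\lesssim\delta^{j_0+1-|\beta|}$ on a shell of volume $\sim\delta$. Plancherel then gives
\[
\|x^\beta K_\delta\|_{L^2}=\|\partial^\beta_\xi m_\delta\|_{L^2}\lesssim\delta^{j_0+1-|\beta|+1/2}.
\]
Weighting by $(1+|\delta x|^2)^{l}$ with a fixed $l>d/4$, exactly as in \eqref{eq:multiplier calculation}, yields $\|(1+|\delta x|^2)^lK_\delta\|_{L^2}\lesssim\delta^{j_0+3/2}$. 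Cauchy--Schwarz together with $\|(1+|\delta x|^2)^{-l}\|_{L^2}\sim\delta^{-d/2}$ then gives
\[
\|K_\delta\|_{L^1}\lesssim\delta^{j_0+3/2-d/2}=\delta^{(2j_0+3-d)/2}.
\]
Since $j_0=\lfloor\frac{d-1}{2}\rfloor\ge\frac{d-2}{2}$, we get $2j_0+3-d\ge1$, so the exponent is at least $\tfrac12>0$ and the bound tends to zero. For $j_0=0$ this reproduces the exponents $\delta$ and $\delta^{1/2}$ found earlier in $d=1,2$.

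\emph{Main obstacle.} The only delicate point is the derivative bookkeeping in Step 2. One must check that every term of $\partial^\beta_\xi m_\delta$ really carries $\delta^{j_0+1-|\beta|}$. This holds because the derivatives of $\varphi((|\xi|-1)/\delta)$ contribute $\delta^{-k}$ times smooth bounded factors (fine, since $|\xi|\approx1$ there). Derivatives landing on $(|\xi|^2-1)^{j_0+1}$ either reduce its power, giving one fewer factor of $\delta$, or produce bounded polynomial factors in $\xi$. After this check the argument is a direct repetition of the $d=1,2$ case.
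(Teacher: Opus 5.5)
Your proposal is correct and is the paper's argument: the paper proves this lemma in one line, by running the kernel estimate \eqref{eq:multiplier calculation} on the multiplier $(|\xi|^2-1)^{l}\varphi(\frac{|\xi|-1}{\delta})$, and your bound $\|K_\delta\|_{L^1}\lesssim\delta^{(2j_0+3-d)/2}$ (where $j_0+1$ is exactly the smallest power making the exponent positive) is the computation the paper leaves implicit. Two of your details are correct and make the argument rigorous: deducing $m_\delta\hat f=(|\xi|^2-1)^{j_0+1}\hat f$ from the two multipliers agreeing on a neighbourhood of $\spt\hat f$, and taking $l$ to be an integer so that the weight $(1+|\delta x|^2)^l$ expands as a polynomial.
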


Similarly, there exists counterexamples for Lemma \ref{Thin annulus lemma} in 3d. The following one is suggested by Google Gemini. If $f=3\frac{\sin|x|}{|x|}-\cos|x|$, then $\hat{f}=\sqrt{\frac{\pi}{2}}(\partial_r\,d\sigma+2\,d\sigma)$, which is supported in $S^2$. On the other hand, $f$ achieves maximum at $x=0$, with $\Delta f(0)=0$. Thus, we cannot hope for a positive lower bound estimate for $-\Delta f\cdot f$ even if we shrink the annulus.

However, there are still hopes to make use of dissipation in the flow of time. Recall that for heat equation $f(t)=e^{t\Delta}f$, we have
\[\|f(t)\|_{L^\infty}\le Ce^{-ct}\|f\|_{L^\infty}.\]

Writing $Ce^{-ct}=e^{-c(t-t_0)}$ for some $t_0>0$ allows us to interpret this in the context of the heat equation: outside an interval of length $t_0$, the inequality $-\Delta f(t,x)\cdot f(t,x)\ge c|f|^2(t,x)$ holds on time average at points $x$ of supremum.

This interpretation suggests that these fine-tuned examples do not occur frequently in the flow of a heat solution. Lets see locally in time what happens when we evolve the counterexample.
\[e^{-t|\xi|^2}(\partial_r\,d\sigma+2\,d\sigma)=e^{-t}(\partial_r\,d\sigma+(2+2t)\,d\sigma).\]
The corresponding function in physical space is $C(t)((3+2t)\frac{\sin|x|}{|x|}-\cos|x|)$. The cancellation is spoiled, and the ratio $\frac{-\Delta f\cdot f}{|f|^2}(0)=\frac{t}{1+t}$ grows from $0$ nondegenerately for small $t>0$ ($\frac{d}{dt}|_{t=0}\frac{t}{1+t}\neq 0$).

Of course, the situation we want to apply is not heat, but a diffusion transport equation with forcing term. It is not obvious how we could prevent the drift and the forcing from repositioning the solution into these delicate balance. Instead, the lesson we took from the heat equation is that a Fourier multiplier could possibly perturb the degeneracy. We end this section with a rigorous statement following this idea.

In 3d, if $\spt\hat{f}\subset B_\delta(S^2)$ and $\delta=\delta(\epsilon)$ is small enough, we can prove that either 
\[\|(-\Delta-1)f\|_{L^\infty}\le \epsilon\|f\|_{L^\infty}\]
holds, or the "perturbed" function $g:=(-\Delta-1)f$ satisfies
\[\|(-\Delta-1)g\|_{L^\infty}\le\epsilon\|g\|_{L^\infty}.\]
Heuristically, either $f$ already looks like an eigenfunction of the Laplacian, or it contains a part that looks like a function in $\ker (-\Delta-1)^2$. In the latter case, the function $g$ looks like an eigenfunction of the Laplacian. In fact, we can replace the above $g$ by $\tilde{g}=(-\Delta-\lambda^2)f$ if $\lambda$ is sufficiently close to $1$ depending on $\delta$. This gives us an abundance of auxiliary functions that dissipate well when the original function $f$ does not, which will be the key to the proof in 3d and above. We state a general lemma and use proof by contradiction to demonstrate the idea. We will also present a direct and quantitative proof during the proof for 3d (see \eqref{eq:concrete proof} and nearby arguments).

\begin{lemma}\label{lem:generalized thin annulus lemma}
Given a modulus $\phi:[0,1]\to\R_{\ge 0}$ with $\lim_{\delta\to 0}\phi(\delta)=0$ and a $\epsilon>0$, there exists a $\delta_0=\delta_0(\phi,\epsilon,d,m)$ such that the following statement is true for all $0<\delta\le \delta_0$:

For all $f_0\in L^\infty(\R^d;\R^m)$ with $\spt\hat{f}_0\subset\{\xi:R(1+\delta)^{-1}<|\xi|< R(1+\delta)\}$, if we let $f_j=(-\Delta-\lambda_j^2)f_{j-1}$, where $\lambda_j= R(1+\gamma_j)$ with any $|\gamma_j|\le\phi(\delta)$, $j=1,\dots,j_0(d)$, then there exists a $0\le j\le j_0(d)$ such that
\begin{align}
\|(-\Delta-R^2)f_j\|_{L^\infty}\le \epsilon R^2\|f_j\|_{L^\infty}.
\end{align}
\end{lemma}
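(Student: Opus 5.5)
The plan is to argue by contradiction using compactness, in the spirit announced before the statement. By the rescaling $f(\cdot/R)$ we may assume $R=1$. Write $A=-\Delta-1$ and $c_j=1-\lambda_j^2$, so that $|c_j|\lesssim\phi(\delta)\to0$. The recursion then reads $f_j=(A+c_j)f_{j-1}$, i.e.
\begin{align*}
Af_{j}=f_{j+1}-c_{j+1}f_{j},\qquad 0\le j<j_0(d).
\end{align*}
Suppose the lemma fails for some $\epsilon>0$. Then there are $\delta_n\to0$, functions $f^n_0$ with $\spt\hat f^n_0\subset\{(1+\delta_n)^{-1}<|\xi|<1+\delta_n\}$, and admissible $\gamma^n_j$ (hence $c^n_j\to0$) such that
\begin{align*}
\|Af^n_j\|_{L^\infty}>\epsilon\|f^n_j\|_{L^\infty}\quad\text{for every } 0\le j\le j_0(d).
\end{align*}

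\textbf{Normalization.} Set $N_n=\max_j\|f^n_j\|_{L^\infty}$. Passing to a subsequence, the ratios $\|f^n_j\|_{L^\infty}/N_n$ converge for every $j$, and at least one limit equals $1$. Let $j_2$ be the \emph{largest} index whose limit ratio is positive.

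\textbf{Case $j_2<j_0(d)$.} Then
\begin{align*}
\|Af^n_{j_2}\|_{L^\infty}\le\|f^n_{j_2+1}\|_{L^\infty}+|c^n_{j_2+1}|\,\|f^n_{j_2}\|_{L^\infty}=o(N_n)=o\big(\|f^n_{j_2}\|_{L^\infty}\big).
\end{align*}
This contradicts the failure assumption at $j=j_2$ directly, with no limit extraction needed.

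\textbf{Case $j_2=j_0(d)$.} Here every $\|f^n_j\|_{L^\infty}$ is $O(N_n)$ and $\|f^n_{j_0}\|_{L^\infty}\gtrsim N_n$.
\begin{enumerate}[label=(\alph*)]
\item Pick points $x_n$ with $|Af^n_{j_0}(x_n)|\ge\frac{\epsilon}{2}\|f^n_{j_0}\|_{L^\infty}$ and translate them to the origin.
\item Divide all $f^n_j$ by $N_n$. All the $f^n_j$ have Fourier support in a fixed bounded annulus, so Bernstein's inequality (derivatives on the kernel $\sum\varphi$) bounds every derivative uniformly. Arzel\`a--Ascoli together with weak-$*$ compactness in $L^\infty$ then gives a subsequence along which $f^n_j/N_n\to g_j$ locally uniformly with all derivatives.
\item The limits are bounded, their Fourier transforms are supported in $S^{d-1}$ (the distributional limit of the shrinking annuli), and since $c^n_j\to0$ they satisfy $g_j=A^jg_0$.
\item Locally uniform convergence of second derivatives gives $|Ag_{j_0}(0)|\ge\frac{\epsilon}{2}\liminf\|f^n_{j_0}\|_{L^\infty}/N_n>0$.
\item On the other hand, $Ag_{j_0}=A^{j_0(d)+1}g_0=0$ by Lemma~\ref{lem:iterated laplacian}, which is a contradiction.
\end{enumerate}
The constant $\delta_0$ then depends only on $\phi,\epsilon,d,m$, as required.

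The main obstacle is that $L^\infty$ norms are not preserved under weak-$*$ or locally uniform limits. A naive normalization, say $\|f_0\|=1$, can produce a trivial limit. Moreover, for $d\ge4$ the operators $A$ restricted to the annulus are not uniformly bounded (the kernel bound \eqref{eq:multiplier calculation} is $\delta^{(3-d)/2}$), so the $f_j$ could be much larger than $f_0$. This is why I normalize by the maximal norm, pick the largest non-negligible index, and translate to a near-supremum point of $Af_{j_0}$ rather than of $f_{j_0}$. I expect the delicate verification to be that the limit objects genuinely satisfy the hypotheses of Lemma~\ref{lem:iterated laplacian}: the support of $\hat g_0$ lies in $S^{d-1}$, and the relation $g_{j}=A^{j}g_0$ survives the limit. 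Both should follow from testing against Schwartz functions whose Fourier transforms avoid the sphere, together with $c^n_j\to0$.

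If a quantitative version is wanted instead, I would try to expand $\prod_j(A+c_j)$ and bound kernels of the multipliers $(|\xi|^2-1)^l\varphi(\frac{|\xi|-1}{\delta})$ as in \eqref{eq:multiplier calculation}. However, this only closes when $\phi(\delta)$ is comparable to $\delta$, so for general moduli I would rely on the compactness argument above.
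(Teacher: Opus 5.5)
Your proof is correct and is essentially the paper's compactness-by-contradiction argument. The paper splits into two cases. If some ratio $\|f^{(n)}_{j-1}\|_{L^\infty}/\|f^{(n)}_j\|_{L^\infty}$ is unbounded, it normalizes $f^{(n)}_{j-1}$ and gets $(-\Delta-1)f^{(n)}_{j-1}\to0$ directly. If all ratios are bounded, it normalizes $f^{(n)}_{j_0(d)}$, translates, passes to a $C^\infty_{loc}$ limit and invokes Lemma~\ref{lem:iterated laplacian}. Your max-normalization and choice of the largest non-negligible index $j_2$ reproduce exactly this split.

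One remark in your motivation is wrong, though it does not affect the proof. You say the $f_j$ could be much larger than $f_0$ for $d\ge4$. In fact Bernstein gives $\|(-\Delta-1)f\|_{L^\infty}\lesssim\|f\|_{L^\infty}$ uniformly in $\delta$ for Fourier support in a fixed ball. The real danger your normalization handles is the $f_j$ becoming much smaller.
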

\begin{proof}
We may assume $R=1$ by scaling.

Assume that we have a sequence $f_j^{(n)}$ contradicting the statement for some $\delta_n\le\frac{1}{n}$. There are two cases.

\textit{Case 1.} There exists $1\le j\le j_0(d)$ such that $\limsup_{n\to \infty}\frac{\|f_{j-1}^{(n)}\|_{L^\infty}}{\|f_{j}^{(n)}\|_{L^\infty}}=\infty$. 

In this case, we normalize $\|f_{j-1}^{(n)}\|_{L^\infty}=1$. Now $f_{j}^{(n)}$ converges to $0$ strongly in $L^\infty$, while on the other hand,
\[(-\Delta-1)f_{j-1}^{(n)}=f_{j}^{(n)}+(2\gamma_j^{(n)}+(\gamma_j^{(n)})^2)f_{j-1}^{(n)}.\]
This also converges to zero strongly, thus contradicting $\|(-\Delta-1)f_{j-1}\|_{L^\infty}>\epsilon$.

\textit{Case 2.} For each $j,n$, $\|f_{j-1}^{(n)}\|_{L^\infty}\le C\|f_{j}^{(n)}\|_{L^\infty}$. In particular, if we normalize $\|f_{j_0(d)}^{(n)}\|_{L^\infty}=1$, then every sequence $f_{j}^{(n)}$ is bounded in $L^\infty$ norm. Translate so that $|(-\Delta-1)f_{j_0(d)}^{(n)}|(0)>\epsilon$, and pick a subsequence so that for every $j$, $f_{j}^{(n)}$ converges $L^\infty$ weakly* and $C^\infty_{loc}$ to $f_j^{(\infty)}$. It is easy to see that $(-\Delta-1)f_{j-1}^{(\infty)}=f_j^{(\infty)}$ by the control $|\gamma_j^{(n)}|\to0$. Thus,
\[f_{j_0(d)}^{(\infty)}=(-\Delta-1)^{j_0(d)}f_0^{(\infty)}.\]
Note that $f_0^{(\infty)}$ has frequency supported in $S^{d-1}$. By Lemma \ref{lem:iterated laplacian}, we know $(-\Delta-1)f_{j_0(d)}^{(\infty)}=0$, contradicting the conclusion $|(-\Delta-1)f_{j_0(d)}^{(\infty)}|(0)\ge\epsilon$.

\end{proof}

\section{Lagrangian estimates in the regime \texorpdfstring{$1<\frac{1}{1-\alpha}\le 2$}{1 < 1/(1-alpha) le 2}}\label{sec:Lagrangian estimate}

In this section, we prove the Lagrangian trajectory estimates (Theorem \ref{Lagrangian estimate}) in the case of $0<\alpha\le \frac{1}{2}$, which contains the critical exponent $\alpha=\frac{1}{3}$ in the turbulence theory. We will only present the proof in dimension 2 and 3 to make the ideas clear.

We will first observe that the key lies in an estimate of the material derivative $(\partial_t+P_{\le k}u\cdot\nabla)P_{\le k}u$. Then, after evoking some well-known estimates for the commutators and the pressure, we further reduce the problem to the estimate of the diffusion term $\nu\Delta P_ku$. Evolving the flow and damping the $L^\infty$ norm using the thin annulus lemma, we can make the dissipation term as good as other terms at sufficiently small scales after a certain amount of time and thus conclude the theorem in dimension $2$.

The key to higher dimensions lies in finding auxiliary functions and making use of the dissipation in either one of the functions. The main argument is the same in any dimension, and we will present the extra ingredients in dimension 3 after the proof in dimension 2, which is easier and more intuitive.

\subsection{Main Proof in dimension 2}

The proof starts with the idea in \cite{Ise23} and \cite{Ise25}, which involves comparing the trajectories with those of the coarse scale flow. For a time scale $\tau>0$, we associate it with a spatial scale $(1+\delta)^{-k}$ given by the eddy turnover time $\tau\sim\|u\|^{-1}_{L^\infty_t\dot{C}^\alpha_x}(1+\delta)^{(\alpha-1)k}$.
Considering the coarse scale flow $\partial_t x_{(k)}=P_{\le k}u(t,x_{(k)}(t))$ with $x_{(k)}(t_0)=x(t_0)$, we can decompose the difference as follows:
\begin{align}\label{eq:difference decomposition}
\begin{split}
&u(t_0+\tau,x(t_0+\tau))-u(t_0,x(t_0))\\
&=\underbrace{(u(t_0+\tau,x(t_0+\tau))-P_{\le k}u(t_0+\tau,x(t_0+\tau)))}_{(I)}-\underbrace{(u(t_0,x(t_0))-P_{\le k}u(t_0,x(t_0)))}_{(II)}\\
&+\underbrace{(P_{\le k}u(t_0+\tau,x(t_0+\tau))-P_{\le k}u(t_0+\tau,x_{(k)}(t_0+\tau)))}_{(III)}-\underbrace{(P_{\le k}u(t_0,x(t_0))-P_{\le k}u(t_0,x_{(k)}(t_0)))}_{(IV)}\\
&+\underbrace{(P_{\le k}u(t_0+\tau,x_{(k)}(t_0+\tau))-P_{\le k}u(t_0,x_{(k)}(t_0)))}_{(V)}.
\end{split}
\end{align}

Term (I) and (II) can be estimated using the infinite sum:
\begin{align}
|u(t,x(t))-P_{\le k}u(t,x(t))|\le\sum_{h=k+1}^{\infty}|P_{h}u(t,x(t))|\lesssim(1+\delta)^{-\alpha k}\|u\|_{L^\infty_t\dot{C}^\alpha_x}\lesssim \tau^{\frac{\alpha}{1-\alpha}}\|u\|_{L^\infty_t\dot{C}^\alpha_x}^{\frac{1}{1-\alpha}}.
\end{align}
Note that the term (IV) is zero. For term (III), we need to estimate the difference between $x(t)$ and $x_{(k)}(t)$. From the trajectory equation, we can write
\begin{equation*}
\begin{split}
\partial_t(x(t)-x_{(k)}(t))
&=u(t,x(t))-P_{\le k}u(t,x_{(k)}(t))\\
&=(u(t,x(t))-P_{\le k}u(t,x(t)))+(P_{\le k}u(t,x(t))-P_{\le k}u(t,x_{(k)}(t))).
\end{split}
\end{equation*}
The first difference is estimated similarly to term (I) and (II). The second term is controlled using the fundamental theorem of calculus and the observation that $\|\nabla P_{\le k}u\|_{L^\infty}\lesssim (1+\delta)^{(1-\alpha)k}\|u\|_{L^\infty_t\dot{C}^\alpha_x}$. Summarizing these, we get
\begin{align}
|\partial_t(x(t)-x_{(k)}(t))|\le C(1+\delta)^{-\alpha k}\|u\|_{L^\infty_t\dot{C}^\alpha_x}+C(1+\delta)^{(1-\alpha)k}\|u\|_{L^\infty_t\dot{C}^\alpha_x}|x(t)-x_{(k)}(t)|.
\end{align}
Applying Gronwall's inequality and noting that $x(t_0)=x_{(k)}(t_0)$, we obtain the bound 
\begin{align}\label{Trajectory difference}
|x(t)-x_{(k)}(t)|\le C(1+\delta)^{-k}(\exp(C{(1+\delta)^{(1-\alpha)k}\|u\|_{L^\infty_t\dot{C}^\alpha_x}(t-t_0)})-1).
\end{align}
Taking $t=t_0+\tau$, we obtain the same bound $C\tau^{\frac{\alpha}{1-\alpha}}\|u\|_{L^\infty_t\dot{C}^\alpha_x}^{\frac{1}{1-\alpha}}$ for term (III).

For the last term (V), we estimate it using the material derivative:
\begin{align}
|P_{\le k}u(t_0+\tau,x_{(k)}(t_0+\tau))-P_{\le k}u(t_0,x_{(k)}(t_0))|\le\|(\partial_t+P_{\le k}u\cdot\nabla)P_{\le k}u\|_{L^\infty}\cdot\tau.
\end{align}
Thus, the problem reduces to estimating the material derivative $D_{\le k,t}P_{\le k}u$. To achieve this, we consider the evolution of Littlewood-Paley pieces. 
\begin{align*}
\partial_t P_{\le k}u+P_{\le k}u\cdot\nabla P_{\le k}u-\nu \Delta P_{\le k}u=-\nabla P_{\le k}p-\Div R_{\le k},
\end{align*}
where $R_{\le k}=P_{\le k}(u\otimes u)-P_{\le k}u\otimes P_{\le k}u$. Subtracting the $(k-1)$-th equation from the $k$-th equation, we derive
\begin{align}\label{Littlewood-Paley evolution}
\partial_t P_ku+P_{\le k}u\cdot\nabla P_ku-\nu \Delta P_ku=F_k=-P_ku\cdot\nabla P_{\le k-1}u-\nabla P_kp-\Div R_{\le k}+\Div R_{\le k-1}.
\end{align}
We now claim the forcing term estimate $\|F_k\|_{L^\infty}\lesssim (1+\delta)^{(1-2\alpha)k}\|u\|_{\dot{C}^\alpha}^2$. It can be derived in an easier way, but for later purposes, we rewrite the forcing term in alternative forms.

The pressure term has a good structure and can be estimated directly, but we do it differently by combining the high-high part also into commutator estimates for convenience. We can write
\begin{align}\label{pressure decomposition}
P_kp
&=P_k(-\Delta)^{-1}\partial_i\partial_jP_{\le k+2}(u^iu^j)\nonumber\\
&=P_k(-\Delta)^{-1}\partial_i\partial_jR_{\le k+2}^{ij}+P_k(-\Delta)^{-1}(\partial_jP_{\le k+2}u^i\cdot\partial_iP_{\le k+2}u^j).
\end{align}

The estimates for the terms $P_ku\cdot\nabla P_{\le k-1}u$ and $P_k(-\Delta)^{-1}(\partial_jP_{\le k+2}u^i\cdot\partial_iP_{\le k+2}u^j)$ are straightforward. It suffices to apply the commutator estimate \cite{CET94}: 
\begin{align}
\|R_{\le k}\|_{L^\infty}\lesssim (1+\delta)^{-2\alpha k}\|u\|_{\dot{C}^\alpha}^2.\label{Commutator base case}  
\end{align}
We give a proof for \eqref{Commutator base case}, which is not the shortest, but is useful for later use. Recall the useful decomposition exhibited in \cite{Ise23}: 
\begin{align}
R_{\le k}(x)&=P_{\le k}((u-P_{\le k}u(x))\otimes(u-P_{\le k}u(x)))(x)\\
&=P_{\le k}(P_{>k}u\otimes P_{>k}u)(x)\tag{HH}\\
&+P_{\le k}(P_{>k}u\otimes (P_{\le k}u-P_{\le k}u(x)))(x)+P_{\le k}((P_{\le k}u-P_{\le k}u(x))\otimes P_{>k}u)(x)\tag{HL,LH}\\
&+P_{\le k}((P_{\le k}u-P_{\le k}u(x))\otimes (P_{\le k}u-P_{\le k}u(x)))(x).\tag{LL}
\end{align}
We denote the terms by $R_{\le k}=R_{\le k,HH}+R_{\le k,HL}+R_{\le k,LH}+R_{\le k,LL}$. Furthermore, these terms can be written as:
\begin{equation}\label{Commutator decomposition}
\begin{split}
R_{\le k,HH}(x)&=\sum_{h>k}P_{\le k} (P_hu\otimes P_{\approx h}u),\\
R_{\le k,HL}(x)&=\int \varphi_{\le k}(y)(P_{\le k}u(x-y)-P_{\le k}(x))\otimes P_{\approx k}u(x-y)\,dy,\\
R_{\le k,LH}&=R_{\le k,HL}^T=\int \varphi_{\le k}(y)P_{\approx k}u(x-y)\otimes (P_{\le k}u(x-y)-P_{\le k}(x))\,dy,\\
R_{\le k,LL}(x)&=\int \varphi_{\le k}(y)(P_{\le k}u(x-y)-P_{\le k}(x))\otimes (P_{\le k}u(x-y)-P_{\le k}(x))\,dy,
\end{split}
\end{equation}
where $P_{\approx k}$ represents an operator of the form $=P_{[k+b_1,k+b_2]}$ with $|b_1|,|b_2|\le C_{\delta}$.
We can use the fundamental theorem of calculus to write 
\[P_{\le k}u(x-y)-P_{\le k}u(x)=-\int_0^1y\cdot\nabla P_{\le k}u(x-sy)\,ds.\]
Together with the bound $\|\varphi_{\le k}(y)y^{\otimes c}\|_{L^1}\lesssim_c (1+\delta)^{-ck}$, the estimate $\|R_{\le k}\|_{L^\infty}\lesssim (1+\delta)^{-2\alpha k}\|u\|_{\dot{C}^\alpha}^2$ readily follows.

It remains to bound the diffusion term. However, the Hölder bound only yields $\|\nu \Delta P_{\le k}u\|_{L^\infty}\lesssim \nu (1+\delta)^{(2-\alpha)k}\|u\|_{\dot{C}^\alpha}$, which provides the desired bound only for sufficiently large scales where $(1+\delta)^{-k}\ge \|u\|^{\frac{-1}{1+\alpha}}_{L^\infty\dot{C}^\alpha}\nu^{\frac{1}{1+\alpha}}$. At smaller scales $(1+\delta)^{-k}\le \|u\|_{L^\infty\dot{C}^\alpha}^{\frac{-1}{1+\alpha}}\nu^{\frac{1}{1+\alpha}}$, further analysis is required.

We employ again the maximum principle. Testing \eqref{Littlewood-Paley evolution} against $P_ku$ yields
\begin{align}
\frac{1}{2}\partial_t|P_ku|^2+\frac{1}{2}P_{\le k}u\cdot\nabla|P_ku|^2-\nu\Delta P_ku\cdot P_ku=F_k\cdot P_ku.
\end{align}
$\|P_ku\|_{L^\infty}(t)$ is a locally Lipschitz function in time, and for a.e. $t$ where it is differentiable, we have
\begin{align}
\frac{d}{dt}\|P_ku\|_{L^\infty}^2\le\sup_{x:|P_ku|^2(x)=\|P_ku\|_{L^\infty}}\partial_t|P_ku|^2(x).
\end{align}
For such maximum points $x$, $\nabla|P_ku|^2(x)=0$. Lemma \ref{Thin annulus lemma} and $d=2$ implies $-\Delta P_ku(x)\cdot P_ku(x)\ge c(1+\delta)^{2k}|P_ku|^2(x)$. We obtain
\[\frac{1}{2}\partial_t|P_ku|^2(x)\le -c\nu(1+\delta)^{2k}\|P_ku\|_{L^\infty}^2+\|F_k\|_{L^\infty}\|P_ku\|_{L^\infty}.\]
Thus, from the bound $\|F_k\|_{L^\infty}\lesssim (1+\delta)^{(1-2\alpha)k}\|u\|_{\dot{C}^\alpha}^2$, for a.e. $t$,
\begin{align}
\frac{d}{dt}\|P_ku\|_{L^\infty}\le -c\nu(1+\delta)^{2k}\|P_ku\|_{L^\infty}+C(1+\delta)^{(1-2\alpha)k}\|u\|_{\dot{C}^\alpha}^2.
\end{align}
By Gronwall's inequality,
\begin{align}
\|P_ku\|_{L^\infty}\le e^{-c\nu(1+\delta)^{2k}t}\|P_ku\|_{L^\infty}(0)+C(1+\delta)^{(1-2\alpha)k}\|u\|_{L^\infty\dot{C}^\alpha}^2\frac{1}{c\nu(1+\delta)^{2k}}(1-e^{-c\nu(1+\delta)^{2k}t}).
\end{align}
For times $t\ge a\|u\|_{L^\infty\dot{C}^\alpha}^{\frac{-2}{1+\alpha}}\nu^{\frac{1-\alpha}{1+\alpha}}$, the small-scale condition $(1+\delta)^{-k}\le \|u\|_{L^\infty\dot{C}^\alpha}^{\frac{-1}{1+\alpha}}\nu^{\frac{1}{1+\alpha}}$ ensures that $t\ge a\|u\|_{\dot{C}^\alpha}^{-1}(1+\delta)^{(\alpha-1)k}$. This leads to the bound
\[e^{-c\nu(1+\delta)^{2k}t}\|P_ku\|_{L^\infty}(0)\lesssim (c\nu (1+\delta)^{(1+\alpha)k}a\|u\|_{L^\infty\dot{C}^\alpha}^{-1})^{-1}\cdot (1+\delta)^{-\alpha k}\|u\|_{L^\infty\dot{C}^\alpha}\lesssim a^{-1}\nu^{-1}(1+\delta)^{(-1-2\alpha)k}\|u\|_{L^\infty\dot{C}^\alpha}^2.\]

Finally, we conclude the bound
\begin{align}\label{M.5 base case}
\|P_ku\|_{L^\infty}(t)\le C(a^{-1}+1)\|u\|_{L^\infty\dot{C}^\alpha}^2\nu^{-1}(1+\delta)^{(-1-2\alpha)k}.
\end{align}
Applying this estimate to the diffusion term, we obtain
\begin{align}
\|\nu\Delta P_ku\|_{L^\infty}(t)\le C(a^{-1}+1)\|u\|_{L^\infty\dot{C}^\alpha}^2(1+\delta)^{(1-2\alpha)k}.
\end{align}

From \eqref{Littlewood-Paley evolution}, we see that the material derivative obeys the same bound:
\begin{align}
\|(\partial_t+P_{\le k}u\cdot\nabla)P_ku\|_{L^\infty}(t)\le C(a^{-1}+1)\|u\|^2_{L^\infty\dot{C}^\alpha}(1+\delta)^{(1-2\alpha)k}.
\end{align}

To finish the proof, it remains to show that $(\partial_t+P_{\le k}u\cdot\nabla)P_{\le k}u$ also satisfies the same estimate. Consider the telescoping sum
\begin{align}
(\partial_t+P_{\le k}u\cdot\nabla)P_{\le k}u=\sum_{l\le k-1}\delta_{(l)}(\partial_t+P_{\le l}u\cdot\nabla)P_{\le l}u,
\end{align}
where $\delta_{(l)}$ is the ``increment'' defined by
\begin{align}\label{eq:increment}
\delta_{(l)}E_l=E_{l+1}-E_l,
\end{align}
for an expression $E_l$ indexed by $l$. We will use this notation a few times later.

Rewriting the increment as $(\partial_t+P_{\le l+1}u\cdot\nabla)P_{l+1}u+P_{l+1}u\cdot\nabla P_{\le l}u$, it follows that
\begin{align}
\|\delta_{(l)}(\partial_t+P_{\le l}u\cdot\nabla)P_{\le l}u\|_{L^\infty}\lesssim (a^{-1}+1)\|u\|^2_{L^\infty\dot{C}^\alpha}(1+\delta)^{(1-2\alpha)l}.
\end{align}
In the range $0<\alpha<\frac{1}{2}$, by the above estimates,
\[\|(\partial_t+P_{\le k}u\cdot\nabla)P_{\le k}u\|_{L^\infty}\lesssim(a^{-1}+1)\|u\|^2_{L^\infty\dot{C}^\alpha}\sum_{l\le k}(1+\delta)^{(1-2\alpha)l}\lesssim (a^{-1}+1)\|u\|^2_{L^\infty\dot{C}^\alpha}(1+\delta)^{(1-2\alpha)k}.\]
This gives the desired estimate for term (V). 

On the other hand, if $\alpha=\frac{1}{2}$, the only difference lies in the final summation, as we only have the following bound for the increments:
\[\|\delta_{(l)}(\partial_t+P_{\le l}u\cdot\nabla)P_{\le l}u\|\lesssim (a^{-1}+1)\|u\|^2_{L^\infty\dot{C}^\alpha}.\]
Since there are only finitely many $l\le k$, summing over $l\le k$ yields a power of $k$:
\begin{align}
\|(\partial_t+P_{\le k}u\cdot\nabla)P_{\le k}u\|\lesssim(a^{-1}+1)\|u\|_{L^\infty\dot{C}^\alpha}^2\max\{1,k\}.
\end{align}
From the relation $\tau\sim\|u\|_{L^\infty\dot{C}^\alpha}^{-1}(1+\delta)^{(\alpha-1)k}$, we have $\max\{1,k\}\sim C(1-\log^-(\|u\|_{L^\infty_t\dot{C}^\alpha_x}\tau))$, a logarithmic loss.

In summary, we have established Theorem \ref{Lagrangian estimate} in the range $0<\alpha\le \frac{1}{2}$ and $d=2$:
\begin{thm}\label{Lagrangian estimate less than one half}
Let $d=2$.

Let $u$ be an $L^\infty_tC^\alpha_x$ solution to \eqref{Navier-Stokes}, $0<\alpha\le \frac{1}{2}$, and $x(t)$ be a trajectory of $u$. Then, for any $a>0$ and $t_1,t_2\ge a\|u\|_{L^\infty_t\dot{C}^\alpha_x}^{\frac{-2}{1+\alpha}}\nu^{\frac{1-\alpha}{1+\alpha}}$, we have the following estimates:
\begin{enumerate}[label=(\roman*), font=\upshape]
    \item If $0<\alpha<\frac{1}{2}$,
    \begin{align}
    |x^\prime(t_1)-x^\prime(t_2)|\le C(a^{-1}+1)\|u\|^{\frac{1}{1-\alpha}}_{L^\infty_t\dot{C}^\alpha_x}|t_1-t_2|^{\frac{\alpha}{1-\alpha}}.
    \end{align}
    \item If $\alpha=\frac{1}{2}$,
    \begin{align}
    |x^\prime(t_1)-x^\prime(t_2)|\le C(a^{-1}+1)\|u\|^{\frac{1}{1-\alpha}}_{L^\infty_t\dot{C}^\alpha_x}|t_1-t_2|(1-\log^-(\|u\|_{L^\infty_t\dot{C}^\alpha_x}|t_1-t_2|)).
    \end{align}
\end{enumerate}
The constant $C$ is independent of $\nu>0$ and $a>0$.
\end{thm}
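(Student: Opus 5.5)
Since the preceding discussion already assembles nearly all the ingredients, the proof of Theorem \ref{Lagrangian estimate less than one half} consists of organizing them into a single chain of estimates. Fix $t_1<t_2$ with $t_1\ge a\|u\|_{L^\infty_t\dot{C}^\alpha_x}^{\frac{-2}{1+\alpha}}\nu^{\frac{1-\alpha}{1+\alpha}}$ and set $t_0=t_1$, $\tau=t_2-t_1$. We choose $k\in\Z$ so that $(1+\delta)^{(1-\alpha)k}\sim(\|u\|_{L^\infty_t\dot{C}^\alpha_x}\tau)^{-1}$, with $\delta$ the value fixed after Lemma \ref{Thin annulus lemma} for $\epsilon=\frac12$. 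Writing $x'(t)=u(t,x(t))$, we split $x'(t_2)-x'(t_1)$ via \eqref{eq:difference decomposition}. Terms (I) and (II) are tails of the Littlewood--Paley sum and are bounded by $(1+\delta)^{-\alpha k}\|u\|_{\dot C^\alpha}\sim\tau^{\frac{\alpha}{1-\alpha}}\|u\|^{\frac{1}{1-\alpha}}$. Term (IV) vanishes. Term (III) is controlled by $\|\nabla P_{\le k}u\|_{L^\infty}$ times the Gronwall bound \eqref{Trajectory difference}. By the choice of $k$ the exponent there is $O(1)$, so (III) has the same size.

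Everything then reduces to bounding term (V), that is, to estimating $\|(\partial_t+P_{\le k}u\cdot\nabla)P_{\le k}u\|_{L^\infty}$ on $[t_1,t_2]$ by $C(a^{-1}+1)\|u\|^2(1+\delta)^{(1-2\alpha)k}$. Multiplying by $\tau$ then gives $\tau^{\frac{\alpha}{1-\alpha}}\|u\|^{\frac{1}{1-\alpha}}$, because $(1+\delta)^{(1-2\alpha)k}\tau\|u\|^2\sim(\|u\|\tau)^{\frac{-(1-2\alpha)}{1-\alpha}+1}\|u\|$. We write $D_{\le k,t}P_{\le k}u$ as the telescoping sum of increments $D_{\le l+1,t}P_{l+1}u+P_{l+1}u\cdot\nabla P_{\le l}u$, where $D_{\le l,t}=\partial_t+P_{\le l}u\cdot\nabla$. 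The second summand is directly $O((1+\delta)^{(1-2\alpha)l}\|u\|^2)$. For the first summand we use \eqref{Littlewood-Paley evolution}, so it suffices to bound $F_l$ and $\nu\Delta P_lu$. The forcing $F_l$ is handled by the decomposition \eqref{pressure decomposition} together with the commutator estimate \eqref{Commutator base case}, in the decomposed form \eqref{Commutator decomposition}.

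The main obstacle is the viscous term at dissipation scales $(1+\delta)^{-l}\le\|u\|^{\frac{-1}{1+\alpha}}\nu^{\frac{1}{1+\alpha}}$; at larger scales the crude bound $\nu(1+\delta)^{(2-\alpha)l}\|u\|$ already suffices. At these small scales we apply the maximum principle to $|P_lu|^2$. At a maximum point the transport term drops out because $\nabla|P_lu|^2=0$, and Lemma \ref{Thin annulus lemma} (valid since $d=2$ and $P_lu$ has Fourier support in a thin annulus of radius $\sim(1+\delta)^l$) gives the damping $-c\nu(1+\delta)^{2l}\|P_lu\|^2$. This yields a differential inequality for $\|P_lu\|_{L^\infty}$, which we integrate by Gronwall. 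Using $t\ge t_1$ and the small-scale condition, we get $\nu(1+\delta)^{2l}t\ge a\|u\|(1+\delta)^{(1-\alpha)l}$, hence \eqref{M.5 base case}. Therefore $\|\nu\Delta P_lu\|\lesssim(a^{-1}+1)\|u\|^2(1+\delta)^{(1-2\alpha)l}$ uniformly for $t\ge t_1$.

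Summing over $l\le k$ gives a geometric series when $\alpha<\frac12$, yielding (i). When $\alpha=\frac12$ each increment is $O(1)$ and the series is not geometric. In that case we apply the estimate at scale $l\ge 0$ and bound the large scales $l<0$ separately: if $\|u\|\tau\gtrsim1$ we choose $k\le0$, and the sum over $l\le 0$ of the increments together with the low-frequency bounds is $O(1)$. We therefore get a factor $\max\{1,k\}\sim 1-\log^-(\|u\|\tau)$, which is (ii). A minor point is that we work with $L^\infty$-maxima on $\T^2$, which exist by compactness, so no mollification is needed.
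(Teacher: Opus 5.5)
Your proposal is correct and follows the paper's own argument essentially step for step. It uses the decomposition (I)--(V), the Gronwall bound \eqref{Trajectory difference} for the coarse trajectory, the telescoping of $D_{\le k,t}P_{\le k}u$ into increments, the forcing bound via \eqref{pressure decomposition} and \eqref{Commutator base case}, and the maximum principle with Lemma~\ref{Thin annulus lemma} leading to \eqref{M.5 base case}, ending with the geometric sum for $\alpha<\frac12$ and the $\max\{1,k\}$ count for $\alpha=\frac12$.

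One small slip: the inequality you wrote, $\nu(1+\delta)^{2l}t\ge a\|u\|(1+\delta)^{(1-\alpha)l}$, is not dimensionally consistent. What $t\ge t_1$ and the small-scale condition actually give, and what is needed together with $e^{-x}\lesssim x^{-1}$, is $t\ge a\|u\|^{-1}(1+\delta)^{(\alpha-1)l}$, i.e.\ $\nu(1+\delta)^{2l}t\ge a\,\nu\|u\|^{-1}(1+\delta)^{(1+\alpha)l}$. Your conclusion \eqref{M.5 base case} is nonetheless the right one.
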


\subsection{The Trick in 3d}

Let $f_0=P_ku$ and $\lambda=(1+\delta)^k$.
As we investigate in Section \ref{sec:generalized thin annulus lemma}, it could happen that $-\Delta f_0\cdot f_0$ does not give us enough damping to use in the proof.
The goal in this subsection is to obtain the estimate \eqref{M.5 base case} in 3d, and the result follows from the same steps.

Following the discussion in Section \ref{sec:generalized thin annulus lemma}, if we consider the function $(-\Delta-\lambda^2)f_0$, then either this function or $f_0$ has a good instantaneous dissipation. However, bounding this function does not give much information to our problem. 

Instead, we consider the auxiliary function $f_1=\lambda^{-2}(-\Delta-(\lambda(1+2\delta))^2)f_0$. Then, we have the following three properties:
\begin{enumerate}
    \item The $L^\infty$ norms of $f_0,f_1$ are comparable up to a constant $C$ depending only on $\delta$:
    \begin{align}\label{eq:comparable}
        C^{-1}\|f_0\|_{L^\infty}\le \|f_1\|_{L^\infty}\le C\|f_0\|_{L^\infty}.
    \end{align}
    \item For each time $t$, either one of the following holds:
    \begin{align}\label{eq:either dissipates}
        \|(-\Delta-\lambda^2)f_0\|_{L^\infty}\le\epsilon\lambda^2\|f_0\|_{L^\infty}\, \text{ or }\ \|(-\Delta-\lambda^2)f_1\|_{L^\infty}\le\epsilon\lambda^2\|f_1\|_{L^\infty}.
    \end{align}
    \item $f_j$ satisfies a transport-diffusion equation with same forcing term estimates:
    \begin{align}\label{eq:same transport diffusion}
        \partial_tf_j+P_{\le k}u\cdot\nabla f_j-\nu\Delta f_j=F^{(j)},
    \end{align}
    where $\|F^{(j)}\|_{L^\infty}\lesssim \|u\|_{L^\infty\dot{C}^\alpha}^2(1+\delta)^{(1-2\alpha)k}$
\end{enumerate}

For \eqref{eq:comparable}, the key is that we consider the frequency $\lambda(1+2\delta)$, which is outside the frequency support of $f_0$. As a result, we can consider the inverse multiplier $\lambda^2(|\xi|^2-(\lambda(1+2\delta))^2)^{-1}\varphi(\frac{|\xi|-\lambda}{\lambda\delta})$, which has operator norm bounded by $C\delta^{-2}$ by a calculation similar to \eqref{eq:multiplier calculation}.

\eqref{eq:either dissipates} follows from Lemma \ref{lem:generalized thin annulus lemma} by setting $\phi(\delta)=2\delta$. We also present a direct proof here. Assume that $\|(-\Delta-\lambda^2)f_0\|_{L^\infty}\ge \epsilon\lambda^2\|f_0\|_{L^\infty}$, we want to conclude $\|(-\Delta-\lambda^2)f_1\|_{L^\infty}\le \epsilon\lambda^2\|f_1\|_{L^\infty}$ for sufficiently small $\delta$. Note that then
\[\|f_1\|_{L^\infty}\ge \lambda^{-2}\|(-\Delta-\lambda^2)f_0\|_{L^\infty}-(4\delta+4\delta^2)\|f_0\|_{L^\infty}\ge \frac{\epsilon}{2}\|f_0\|_{L^\infty},\]
if we pick $\delta$ sufficiently small. Next, we claim that
\begin{align}\label{eq:concrete proof}
\|(-\Delta-\lambda^2)(-\Delta-\lambda^2(1+2\delta)^2)f_0\|_{L^\infty}\le C\delta\lambda^4\|f_0\|_{L^\infty}.
\end{align}
Then, it follows that $\|(-\Delta-\lambda^2)f_1\|_{L^\infty}\le C\delta \epsilon^{-1}\lambda^2\|f_1\|_{L^\infty}$, which completes the assertion by taking $\delta$ sufficiently small again. For the bound, we analyze the multiplier
\begin{align}
m(\xi)=(|\xi|^2-\lambda^2)(|\xi|^2-\lambda^2(1+2\delta)^2)\varphi(\frac{|\xi|-\lambda}{\lambda\delta})
\end{align}
as in \eqref{eq:multiplier calculation}, we obtain $\|\partial_\xi^{\alpha}m\|_{L^2}\lesssim \lambda^{\frac{11}{2}}\delta^{\frac{5}{2}}(\lambda\delta)^{-|\alpha|}$ if $\delta\le 1$, and thus the $L^1$ norm of the kernel is bounded by $C\lambda^4\delta$.

For \eqref{eq:same transport diffusion}, we apply $\lambda^{-2}(-\Delta-\lambda^2(1+2\delta)^2)$ to the equation \eqref{Littlewood-Paley evolution}, which yields
\[\partial_tf_1+P_{\le k}u\cdot\nabla f_1-\nu\Delta f_1=F^{(1)},\]
where
\begin{align}
F^{(1)}=\lambda^{-2}(-\Delta-\lambda^2(1+2\delta)^2)F_k+\lambda^{-2}\Delta P_{\le k}u\cdot\nabla P_ku+2\lambda^{-2}\nabla P_{\le k}u\cdot\nabla^2 P_ku
\end{align}
Note that $\|F^{(1)}\|_{L^\infty}\lesssim \|u\|_{L^\infty\dot{C}^\alpha}^2(1+\delta)^{(1-2\alpha)k}$, which is the same estimate as $F^{(1)}:=F_k$. 

Finally, we finish the proof in 3d by showing that \eqref{eq:comparable}, \eqref{eq:either dissipates}, and \eqref{eq:same transport diffusion} give the estimate \eqref{M.5 base case} as in the 2d proof.

Thus, if we apply maximum principle to the equations for $f_0$ and $f_1$, we get
\[\partial_t\|f_j\|_{L^\infty}+\nu h_j(t)\|f_j\|_{L^\infty}\le C\|u\|_{L^\infty\dot{C}^\alpha}^2(1+\delta)^{(1-2\alpha)k},\]
where $h_j(t)$ are the instantaneous dissipation rates for $f_j$ at time $t$, defined by the following:
\begin{align}
h_j(t)=\sup_{x:|f_j|(t,x)=\|f_j\|_{L^\infty}(t)}\frac{-\Delta f_j\cdot f_j}{|f_j|^2}(t,x).
\end{align}
(We define it using supremum just for convenience. One can pick any $x$ achieving $\|f_j\|$.)
From the argument before, we know that $h_0+h_1\ge \frac{1}{2}(1+\delta)^{2k}$ for all time $t$. Here we chose $\epsilon=\frac{1}{2}$ and fixed a corresponding $\delta>0$. Now consider the quantity $Q=\|f_0\|_{L^\infty}\cdot\|f_1\|_{L^\infty}$. We can bound the evolution for it as follows:
\begin{equation}\label{eq:qunatity Q}
\begin{split}
\partial_t Q+\frac{1}{2}\nu(1+\delta)^{2k}Q
&\le C\|u\|_{L^\infty\dot{C}^\alpha}^2(1+\delta)^{(1-2\alpha)k}(\|f_0\|_{L^\infty}+\|f_1\|_{L^\infty})\\
&\le C\|u\|_{L^\infty\dot{C}^\alpha}^2(1+\delta)^{(1-2\alpha)k}Q^{1/2}\\
&\le \frac{1}{4}\nu(1+\delta)^{2k}Q+C\|u\|_{L^\infty\dot{C}^\alpha}^4\nu^{-1}(1+\delta)^{-4\alpha k}.
\end{split}
\end{equation}
We could then conclude that
\[Q(t)\le e^{-\frac{1}{4}\nu(1+\delta)^{2k}(t-t_0)}Q(t_0)+C\|u\|_{L^\infty\dot{C}^\alpha}^4\nu^{-2}(1+\delta)^{(-2-4\alpha)k}.\]
Since $Q(0)\le\|u\|_{\dot{C}^\alpha}^2(1+\delta)^{-2\alpha k}$ and $t\ge a\|u\|_{L^\infty\dot{C}^\alpha}^{\frac{-2}{1+\alpha}}\nu^{\frac{1-\alpha}{1+\alpha}}\ge a\|u\|_{\dot{C}^\alpha}^{-1}(1+\delta)^{(\alpha-1)k}$, the inequality $e^{-x}\lesssim x^{-2}$ yields
\begin{align}
Q(t)\le C(a^{-2}+1)\|u\|_{L^\infty\dot{C}^\alpha}^4\nu^{-2}(1+\delta)^{(-2-4\alpha)k}.
\end{align}
Since $\|f_0\|_{L^\infty}\lesssim Q^{1/2}$, this finishes the proof.

\subsection{Alternative Approach}\label{sec:alternative approach}

We now present our first argument using an $L^p$ energy estimate. We demonstrate this approach in 2d, while higher dimension can also be done using a suitable $L^p$ analogue of the generalized thin annulus lemma (Lemma \ref{lem:generalized thin annulus lemma}). 

Testing the equation \eqref{Littlewood-Paley evolution} against $|P_ku|^{p-2}P_ku$ and integrating over the spatial domain, we obtain

\[\partial_t\frac{1}{p}\int|P_ku|^p+\int (P_{\le k}u\cdot\nabla) P_ku\cdot P_ku|P_ku|^{p-2}-\nu\int\Delta P_ku\cdot P_ku|P_ku|^{p-2}=\int F_k\cdot P_ku|P_ku|^{p-2}.\]
Note that by the divergence-free condition,
\[\int (P_{\le k}u\cdot\nabla) P_ku\cdot P_ku|P_ku|^{p-2}=\frac{1}{p}\int P_{\le k}u\cdot\nabla|P_ku|^p=\frac{-1}{p}\int \Div P_{\le k}u\cdot |P_ku|^p=0.\]
By Lemma \ref{Instantaneous dissipation rate},
\[\int -\Delta P_ku\cdot P_ku|P_ku|^{p-2}\ge c(1+\delta)^{2k}\int|P_ku|^p.\]
Combining these and using Hölder's inequality, we obtain
\begin{align}
\partial_t\frac{1}{p}\int|P_ku|^p+c_p\nu(1+\delta)^{2k}\int|P_ku|^p\le \|F_k\|_{L^\infty}\cdot\int|P_ku|^{p-1}\le C_{\T^d}\|F_k\|_{L^\infty}(\int|P_ku|^p)^{\frac{p-1}{p}}.
\end{align}
Setting $g(t)=\|P_ku\|_{L^p}(t)$ and inserting the bound $\|F_k\|_{L^\infty}\lesssim (1+\delta)^{(1-2\alpha)k}\|u\|_{\dot{C}^\alpha}^2$ yields
\[\partial_tg+c\nu(1+\delta)^{2k}g\le C(1+\delta)^{(1-2\alpha)k}\|u\|^2_{\dot{C}^\alpha}.\]
Note that $g(0)=\|P_ku\|_{L^p}(0)\le C\|P_ku\|_{L^\infty}(0)\lesssim \|u\|_{L^\infty\dot{C}^\alpha}(1+\delta)^{-\alpha k}$. By Gronwall's inequality and the same argument before, we get
\begin{align}
\|P_ku\|_{L^p}(t)\lesssim (a^{-1}+1)\|u\|_{L^\infty\dot{C}^\alpha}^2\nu^{-1}(1+\delta)^{(-1-2\alpha)k},
\end{align}
with constants independent of large $p$. By taking $p\to\infty$, we recover the same estimate as before.

\begin{remark}
The second proof utilizes the divergence-free condition and the finiteness of $L^p$ norm, which uses more than the $L^\infty$ maximum principle. However, the $L^\infty$ method failed to give any improved bound if the annulus is not thin enough, since $-\Delta P_ku\cdot P_ku$ could vanishes at extreme points (see the remark after Lemma \ref{cp lemma}). On the other hand, Lemma \ref{cp lemma} tells us that we always have
\[\int-\Delta P_ku\cdot P_ku|P_ku|^{p-2}\ge \frac{c_\delta}{p}(1+\delta)^{2k}\int|P_ku|^p,\]
which together with Bernstein's inequality yields
\begin{align}
\|P_ku\|_{L^\infty}(t)\lesssim (1+\delta)^{\frac{dk}{p}}\|P_ku\|_{L^p}(t)\lesssim p(1+\delta)^{\frac{dk}{p}}\cdot (a^{-1}+1)\|u\|_{L^\infty\dot{C}^\alpha}^2\nu^{-1}(1+\delta)^{(-1-2\alpha)k}.
\end{align}
By choosing different $p=\max\{1,k\}$ depending on the scale, we can still get an estimate with a logarithmic loss using this method. Note that this method works in any dimension.
\end{remark}

\section{Lagrangian estimates in the full range of \texorpdfstring{$0<\alpha<1$}{0<alpha<1}: Set up}\label{sec:higher derivatives}

In the previous section, we proved Theorem \ref{Lagrangian estimate} in the range $0<\alpha\le \frac{1}{2}$, and we will set up the proof for the full range $0<\alpha<1$ in this section.

Again, many steps are the same as in \cite{Ise23,Ise25}, but we will provide a self-contained proof for the sake of completeness. 

\subsection{New Difficulties}

To obtain the full range $0<\alpha<1$, we need estimates on higher material derivatives $D_{\le k,t}^{m}P_ku$, where $D_{\le k,t}=\partial_t+P_{\le k}u\cdot\nabla$. Let us examine the new challenges that arise when estimating $D_{\le k,t}^2P_ku$ in the regime $\frac{1}{2}<\alpha\le\frac{2}{3}$.

The idea is to perform the same maximum principle trick on the function $D_{\le k,t}P_ku$. There are two problems: the function $D_{\le k,t}P_ku$ is not frequency localized in a thin annulus, and we need to commute $\nu\Delta$ and the material derivative $D_{\le k,t}$.

Applying the material derivative $D_{\le k,t}$ to the equation \eqref{Littlewood-Paley evolution} yields
\[D_{\le k,t}^2P_ku-D_{\le k,t}\nu\Delta P_ku=D_{\le k,t}F_k.\]
Assume for now that we know the forcing term estimate $\|D_{\le k,t}F_k\|_{L^\infty}\lesssim (a^{-1}+1)\|u\|^3_{L^\infty\dot{C}^\alpha}(1+\delta)^{(2-3\alpha)k}$. It suffices to bound the term $D_{\le k,t}\nu\Delta P_ku$. Consider the commutator 
\[[\nu\Delta,D_{\le k,t}]P_ku=\nu\Delta P_{\le k}u\cdot\nabla P_ku+2\nu\nabla P_{\le k}u\cdot\nabla^2 P_ku.\]
To obtain an estimate that aligns with the forcing term, we apply the following bound to one of the factors:
\[\|P_ku\|_{L^\infty}\lesssim (a^{-1}+1)\|u\|_{L^\infty\dot{C}^\alpha}^2\nu^{-1}(1+\delta)^{(-1-2\alpha)k}.\]
Note that the bound is proven for all $0<\alpha<1$ in the last subsection. We cannot apply it to $P_{\le k}u=\sum_{l\le k}P_l u$ that contains lower frequencies, since the sum would be dominated by the lowest frequencies and not $(1+\delta)^k$. However, both $\Delta P_{\le k}u$ and $\nabla P_{\le k}u$ have derivatives in front, and we simply use the Hölder bound:
\begin{align}
\begin{split}
&\|[\nu\Delta,D_{\le k,t}]P_ku\|_{L^\infty}\\
&\lesssim (a^{-1}+1)\|u\|^3_{L^\infty\dot{C}^\alpha}(\nu((1+\delta)^{(2-\alpha)k})(\nu^{-1}(1+\delta)^{-2\alpha k})+\nu((1+\delta)^{(1-\alpha)k})(\nu^{-1}(1+\delta)^{(1-2\alpha)k}))\\
&\lesssim(a^{-1}+1)\|u\|^3_{L^\infty\dot{C}^\alpha}(1+\delta)^{(2-3\alpha)k}.
\end{split}
\end{align}

We thus conclude that $D_{\le k,t}P_ku$ satisfies the forced transport-diffusion equation:
\[(\partial_t+P_{\le k}u\cdot\nabla)D_{\le k,t}P_ku-\nu\Delta D_{\le k,t}P_ku=\tilde{F}_k=[D_{\le k,t},\nu\Delta]P_ku+D_{\le k,t}F_k.\]
Next, we apply $P_{[k-2,k+2]}$ to the equation and obtain
\[(\partial_t+P_{\le k}u\cdot\nabla)P_{[k-2,k+2]}D_{\le k,t}P_ku-\nu\Delta P_{[k-2,k+2]} D_{\le k,t}P_ku=P_{[k-2,k+2]}\tilde{F}_k+[D_{\le k,t},P_{[k-2,k+2]}]D_{\le k,t}P_ku.\]
The new commutator term on the right hand side turns out to have a good estimate, and we can apply the maximum principle to $|P_{[k-2,k+2]}D_{\le k,t}P_ku|^2$.
Once we get the $L^\infty$ estimate for $P_{[k-2,k+2]}D_{\le k,t}P_ku$, observe that
\[D_{\le k,t}P_ku=D_{\le k,t}P_{[k-2,k+2]}P_ku= P_{[k-2,k+2]}D_{\le k,t}P_ku+[D_{\le k,t},P_{[k-2,k+2]}]P_ku.\]
The commutator term, again, has a good estimate, which implies the desired bound for $D_{\le k,t}P_ku$. The remaining arguments are essentially the same.

\subsection{Induction Scheme}

We now turn to the proof of the general case. Let us first outline the argument to determine what kind of estimates are required.

Let $m\ge 0$ be the integer such that $m<\frac{\alpha}{1-\alpha}\le m+1$. Note that qualitatively $u\in C^\infty$ for positive times, so the $(m+1)$-th derivative of a trajectory is given by the chain rule, and we only need to obtain quantitative estimates for its Hölder norm. As before, we choose $k$ such that $\tau\sim \|u\|_{L^\infty\dot{C}^\alpha}^{-1}(1+\delta)^{(\alpha-1)k}$, and consider the coarse trajectory $\partial_tx_{(k)}(t)=P_{\le k}u(t,x_{(k)}(t))$, $x_{(k)}(t_0)=x(t_0)$. We then obtain the decomposition analogous to \eqref{eq:difference decomposition}:
\begin{align}
\begin{split}
&D^m_tu(t_0+\tau,x(t_0+\tau))-D^m_tu(t_0,x(t_0))\\
&=\underbrace{(D^m_tu(t_0+\tau,x(t_0+\tau))-D_{\le k,t}^mP_{\le k}u(t_0+\tau,x(t_0+\tau)))}_{(I)}-\underbrace{(D^m_tu(t_0,x(t_0))-D_{\le k,t}^mP_{\le k}u(t_0,x(t_0)))}_{(II)}\\
&+\underbrace{(D_{\le k,t}^mP_{\le k}u(t_0+\tau,x(t_0+\tau))-D_{\le k,t}^mP_{\le k}u(t_0+\tau,x_{(k)}(t_0+\tau)))}_{(III)}\\
&-\underbrace{(D_{\le k,t}^mP_{\le k}u(t_0,x(t_0))-D_{\le k,t}^mP_{\le k}u(t_0,x_{(k)}(t_0)))}_{(IV)}\\
&+\underbrace{(D_{\le k,t}^mP_{\le k}u(t_0+\tau,x_{(k)}(t_0+\tau))-D_{\le k,t}^mP_{\le k}u(t_0,x_{(k)}(t_0)))}_{(V)},
\end{split}
\end{align}

where $D_t=(\partial_t+u\cdot\nabla)$.

Term (IV) is zero, while term (I), (II), and (III) are bounded by $\|D^m_tu-D^m_{\le k,t}P_{\le k}u\|_{L^\infty}$ and $\|\nabla D^m_{\le k,t}P_{\le k}u\|_{L^\infty}\cdot |x-x_{(k)}|$, respectively. The difference between the two trajectories obeys the bound \eqref{Trajectory difference} as before. By noting that $D_t^mu=\lim_{h\to\infty}D_{\le h,t}^mP_{\le h}u$ in the sense of distributions, we can rewrite both terms using increments (recall the notation \eqref{eq:increment}):
\begin{align*}
D^m_tu-D^m_{\le k,t}P_{\le k}u&=\sum_{h\ge k}(D^m_{\le h+1,t}P_{\le h+1}u-D^m_{\le h,t}P_{\le h}u)=\sum_{h\ge k}\delta_{(h)}D_{\le h,t}^mP_{\le h}u,\\
\nabla D^m_{\le k,t}P_{\le k}u&=\sum_{l\le k-1}\nabla(D^m_{\le l+1,t}P_{\le l+1}u-D^m_{\le l,t}P_{\le l}u)=\sum_{l\le k-1}\nabla\delta_{(l)}D_{\le l,t}^mP_{\le l}u.
\end{align*}

If we have the following estimate that will be proven in \eqref{M.2}:
\[\|\delta_{(k)} D_{\le k,t}^m P_{\le k}u\|_{L^\infty}\lesssim (a^{-m}+1)\|u\|_{L^\infty\dot{C}^\alpha}^{m+1}(1+\delta)^{(m(1-\alpha)-\alpha)k}.\]
Then, by noting $m(1-\alpha)-\alpha<0$ and $(m+1)(1-\alpha)>0$, the infinite sums both converge and yield
\begin{align}
\|D^m_tu-D^m_{\le k,t}P_{\le k}u\|&\lesssim(a^{-m}+1)\|u\|_{L^\infty\dot{C}^\alpha}^{m+1}(1+\delta)^{(m(1-\alpha)-\alpha)k},\\
\|\nabla D^m_{\le k,t}P_{\le k}u\|&\lesssim(a^{-m}+1)\|u\|_{L^\infty\dot{C}^\alpha}^{m+1}(1+\delta)^{(m+1)(1-\alpha)k}.
\end{align}
Term (V) is bounded by $\|D_{\le k,t}^{m+1}P_{\le k}u\|_{L^\infty}\cdot\tau$. Expressing it as a sum of increments over $l\le k-1$ and applying \eqref{M.2} again, we deduce that $\|D_{\le k,t}^{m+1} P_{\le k}u\|_{L^\infty}$ is bounded by:
\begin{equation}
\begin{dcases}
C(a^{-(m+1)}+1)\|u\|_{L^\infty\dot{C}^\alpha}^{m+2}(1+\delta)^{((m+1)(1-\alpha)-\alpha)k}  &,\quad\text{if }m<\frac{\alpha}{1-\alpha}<m+1,\\
C(a^{-(m+1)}+1)\|u\|_{L^\infty\dot{C}^\alpha}^{m+2}\max\{1,k\}&,\quad\text{if } \frac{\alpha}{1-\alpha}=m+1.
\end{dcases}
\end{equation}
Thus, the estimate \eqref{M.2} for the increments suffices to prove the Hölder bound. To obtain it, we propose an induction scheme of six estimates including \eqref{M.2}. We need to prove them jointly and sequentially, as the $m+1$ case of subsequent estimates might need the $m+1$ case of preceding ones, as well as the $\le m$ cases of all the estimates \eqref{M.1} to \eqref{M.6}.

Let us first introduce some notation. $P_{\lesssim k}u$ denotes any term of the form $P_{\le k+b}u$, with $b$ an integer that satisfies $|b|\le C_\alpha$. $P_{\approx k}u=P_{[k+b_1,k+b_2]}u$ with $|b_1|,|b_2|\le C_{\alpha}$. We allow $b_2<b_1$ so that it can also express a term $-P_{[k+b_2,k+b_1]}u$ with a negative sign. We need the estimate for $P_{\approx k}u$ mainly when dealing with the high-high frequency interactions in the commutator estimate. 

Denote $D_{\lesssim k,t}=\partial_t+P_{\lesssim k}u\cdot\nabla$. By a slight abuse of notation, we use $D_{\lesssim k,t}^m$ to express the product of $m$ such operators of the same form. We will not try to commute two operators of the same kind in the proof, so this ambiguity poses no issue. $\delta_{(k)}D_{\lesssim k}^{m}P_{\lesssim k}u$ represents the difference of two terms of the form $D_{\lesssim k}^{m}P_{\lesssim k}u$. 

Finally, $\perm(A^c,B^w)$ denotes any permutation of $c$ copies of operator $A$ and $w$ copies of operator $B$. For example, $\perm(A,B^2)$ could be $ABB$, $BAB$, or $BBA$.

We can now state the estimates:

\begin{thm}\label{Main lemma}
Let $u$ be a $L^\infty_tC^\alpha_x$ solution to \eqref{Navier-Stokes}. For any $a>0$ and any $t\ge a\|u\|^{\frac{-2}{1+\alpha}}_{L^\infty\dot{C}^\alpha}\nu^{\frac{1-\alpha}{1+\alpha}}$, the following estimates hold:

For all integers $0\le m<\frac{2\alpha}{1-\alpha}+1$,
\begin{align}
\|D_{\lesssim k,t}^{m} P_{\approx k}u\|_{L^\infty}&\lesssim(a^{-m}+1)\|u\|_{L^\infty\dot{C}^\alpha}^{m+1}(1+\delta)^{(m(1-\alpha)-\alpha)k},\tag{M1}\label{M.1}\\
\|\delta_{(k)} D_{\lesssim k,t}^m P_{\lesssim k}u\|_{L^\infty}&\lesssim (a^{-m}+1)\|u\|_{L^\infty\dot{C}^\alpha}^{m+1}(1+\delta)^{(m(1-\alpha)-\alpha)k},\tag{M2}\label{M.2}\\
\|\perm(\nabla,D_{\lesssim k,t}^m)P_{\lesssim k}u\|_{L^\infty}&\lesssim (a^{-m}+1)\|u\|_{L^\infty\dot{C}^\alpha}^{m+1}(1+\delta)^{(m+1)(1-\alpha)k},\tag{M3}\label{M.3}\\
\|[\nu\Delta,D_{\lesssim k,t}^m]P_{\approx k}u\|_{L^\infty}&\lesssim (a^{-m}+1)\|u\|_{L^\infty\dot{C}^\alpha}^{m+2}(1+\delta)^{((m+1)(1-\alpha)-\alpha)k}.\tag{M4}\label{M.4}
\end{align}
And, for all integers $0\le m<\frac{2\alpha}{1-\alpha}$,
\begin{align}
\|D_{\lesssim k,t}^m \nabla P_{\approx k}p\|_{L^\infty}+\|D_{\lesssim k,t}^m \nabla R_{\lesssim k}\|_{L^\infty}\lesssim (a^{-m}+1)&\|u\|_{L^\infty\dot{C}^\alpha}^{m+2}(1+\delta)^{((m+1)(1-\alpha)-\alpha)k},\tag{M5}\label{M.5}\\
\|D_{\le k,t}^{m}P_ku\|_{L^\infty}\lesssim (a^{-(m+1)}+1)&\|u\|_{L^\infty\dot{C}^\alpha}^{m+2}\nu^{-1}(1+\delta)^{-2k}(1+\delta)^{((m+1)(1-\alpha)-\alpha)k}.\tag{M6}\label{M.6}
\end{align}
\end{thm}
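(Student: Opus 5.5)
We prove the six estimates by a joint induction on $m$. Assume \eqref{M.1}--\eqref{M.6} hold for all orders $\le m-1$ (in the ranges stated) and establish them at order $m$ in the order \eqref{M.3}, \eqref{M.1}, \eqref{M.2}, \eqref{M.4}, \eqref{M.5}, \eqref{M.6}. The cycle closes because \eqref{M.1} at order $m$ is obtained from \eqref{M.6} at order $m-1$. The base case $m=0$ is essentially Section~\ref{sec:Lagrangian estimate}:
\begin{itemize}[label={}]
\item \eqref{M.1}--\eqref{M.3} follow from the Hölder characterization \eqref{eq:Hölder characterization};
\item \eqref{M.4} is trivial, since the commutator vanishes;
\item \eqref{M.5} is the commutator estimate \eqref{Commutator base case} together with the pressure decomposition \eqref{pressure decomposition};
\item \eqref{M.6} is \eqref{M.5 base case}, proved in 2d via Lemma~\ref{Thin annulus lemma} and in $d\ge3$ via the auxiliary-function quantity $Q$.
\end{itemize}

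\textbf{Estimates \eqref{M.1}--\eqref{M.3} at order $m$.} We do not try to estimate $D^m_{\lesssim k,t}P_{\approx k}u$ directly. Instead we write $D^m_{\lesssim k,t}P_{\approx k}u=D^{m-1}_{\lesssim k,t}(D_{\lesssim k,t}P_{\approx k}u)$ and use the evolution equation \eqref{Littlewood-Paley evolution}, in the form
\[
D_{\le k,t}P_ku=\nu\Delta P_ku+F_k .
\]
Differences between the various operators $D_{\lesssim k,t}$ are of the form $P_{\approx k}u\cdot\nabla$ and are absorbed using \eqref{M.1} and \eqref{M.3} at lower orders. Then:
\begin{itemize}[label={}]
\item $D^{m-1}F_k$ is controlled by \eqref{M.5} at order $m-1$, plus the quadratic term $P_ku\cdot\nabla P_{\le k-1}u$, which we expand by Leibniz and handle with lower-order \eqref{M.1} and \eqref{M.3};
\item $D^{m-1}\nu\Delta P_ku=\nu\Delta D^{m-1}P_ku+[D^{m-1},\nu\Delta]P_ku$ is controlled by \eqref{M.6} at order $m-1$ (after localizing $D^{m-1}P_ku$ to frequency $\sim(1+\delta)^k$ via $P_{[k-2,k+2]}$ and commutator bounds) and by \eqref{M.4} at order $m-1$.
\end{itemize}
Together these give \eqref{M.1}. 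Estimate \eqref{M.2} follows because an increment $\delta_{(k)}D^m_{\lesssim k}P_{\lesssim k}u$ telescopes into terms that each contain at least one factor $P_{\approx k}u$ (either as the object or inside a $D$), and these are bounded by \eqref{M.1} and \eqref{M.3}. Estimate \eqref{M.3} follows by summing \eqref{M.2} over $l\le k$, commuting $\nabla$ past the $D$'s, which produces $\nabla P_{\lesssim l}u\cdot\nabla$ factors; the sum converges since $(m+1)(1-\alpha)>0$.

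\textbf{Estimates \eqref{M.4} and \eqref{M.5}.} For \eqref{M.4} we expand $[\nu\Delta,D^m]$ as a sum of terms in which $\nu\nabla^j P_{\lesssim k}u$ ($j=1,2$) hits one $D$ slot. Derivatives landing on $P_{\lesssim k}u$ are bounded by \eqref{M.3}. The factor $\nu$ is cancelled by placing the $\nu^{-1}$ gain from \eqref{M.6} on the high-frequency piece, which must be localized first, exactly as in the $D^2$ discussion above. For \eqref{M.5} we follow \cite{Ise23}: apply $D^m$ to the representations \eqref{Commutator decomposition} of $R_{\lesssim k}$ and to the pressure formula. Commuting $D_{\lesssim k,t}$ with a convolution $\int\varphi(y)G(x-y)\,dy$ produces the difference $P_{\lesssim k}u(x)-P_{\lesssim k}u(x-y)$, which we bound by $|y|\,\|\nabla P_{\lesssim k}u\|$ using the moment bounds $\|\varphi_{\le k}y^{\otimes c}\|_{L^1}$. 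For the HH sum we use \eqref{M.1} at each $h>k$; since $\nabla$ on $P_{\le k}$ costs only $(1+\delta)^k$, the resulting series converges when $m<\frac{2\alpha}{1-\alpha}$. This is exactly why \eqref{M.5} is stated only in that range.

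\textbf{Estimate \eqref{M.6} (main obstacle).} Set $f_0=P_{[k-2,k+2]}D^m_{\le k,t}P_ku$ and derive its forced transport--diffusion equation by applying $D^m$ and then the projection to \eqref{Littlewood-Paley evolution}. The forcing consists of $D^mF_k$ (bounded by \eqref{M.5} at order $m$), $[\nu\Delta,D^m]P_ku$ (bounded by \eqref{M.4}), and $[D_{\le k,t},P_{[k-2,k+2]}]D^mP_ku$. The last is a convolution commutator bounded by $\|\nabla P_{\le k}u\|(1+\delta)^{-k}\|\nabla D^mP_ku\|$, and the resulting bound must be closed with \eqref{M.3}. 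Altogether the forcing is of size $(a^{-m}+1)\|u\|^{m+2}(1+\delta)^{((m+1)(1-\alpha)-\alpha)k}$. We then run the maximum-principle damping argument: in 2d via Lemma~\ref{Thin annulus lemma}, and in 3d via the pair $f_0,f_1$ and $Q=\|f_0\|\|f_1\|$. This is where the obstacle sits. The frequency support of $f_0$ is not a thin annulus but $(1+\delta)^{k\pm3}$, so we must either project onto thin pieces $P_{k'}$, $|k'-k|\le 3$, and apply the lemma to each with the cross-terms put into the forcing, or choose $\delta$ much smaller relative to the width. After damping, the initial-data term is handled as before: we use $t\ge a\|u\|^{-1}(1+\delta)^{(\alpha-1)k}$ and $e^{-x}\lesssim x^{-(m+1)}$, which yields the $a^{-(m+1)}$ factor. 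Finally we remove the projection from $f_0$ using the same commutator bound.
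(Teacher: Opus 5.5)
Your induction architecture is the paper's, and your handling of the HL/LL parts of $R_{\lesssim k}$ and of the pressure is fine. Two steps, however, do not go through as you describe them.

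\textbf{High--high part of (M5).} After you commute $D_{\lesssim k,t}$ through $P_{\le k}$, \emph{coarse} material derivatives act on $P_hu\otimes P_{\approx h}u$. (M1) at scale $h$ does not cover these terms: it controls $D^w_{\lesssim h,t}P_{\approx h}u$, i.e.\ derivatives along $P_{\le h+b}u$ with $|b|$ bounded. The discrepancy $P_{[k+b,h]}u\cdot\nabla$ falling on a frequency-$(1+\delta)^h$ factor costs $(1+\delta)^{h-\alpha k}$ instead of $(1+\delta)^{(1-\alpha)h}$. With $w$ such factors your bound is $(1+\delta)^{-w\alpha k}(1+\delta)^{(w-2\alpha)h}$, which is not summable over $h>k$ once $w\ge 2\alpha$, in particular for every $w\ge2$.

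The missing idea is Isett's trick, which the paper uses: insert coarse projections for free and switch to the fine material derivative only between them,
\[
D_{\lesssim k,t}P_{\lesssim k}=P_{\lesssim k}D_{\le h,t}P_{\lesssim k}-P_{\lesssim k}(P_{\approx k}u\cdot\nabla)P_{\lesssim k}.
\]
The sandwich between two coarse projections reduces $P_{[k+b,h]}u$ to $P_{\approx k}u$. Your remark that $\nabla$ costs only $(1+\delta)^k$ applies to this error term, not to the main term. Next, the \emph{fine} derivative $D_{\le h,t}$ is commuted through the coarse convolution using the integrated-by-parts formula of Lemma~\ref{Convolution}. This puts all derivatives on the kernel, where $\|\nabla^c\varphi_{\lesssim k}(y)y^{\otimes c}\|_{L^1}\lesssim 1$, and the differences $P_{\le h}u(x)-P_{\le h}u(x-y)$ are paid for by $\nabla D^{w_i}_{\le h,t}P_{\le h}u$. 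Only after this, together with a downward induction on the number of coarse derivatives in front, do (M1) and (M3) at scale $h$ give $(1+\delta)^{(w(1-\alpha)-2\alpha)h}$. That bound is summable over $h>k$ exactly when $w<\frac{2\alpha}{1-\alpha}$.

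\textbf{Initial value in (M6).} For $m\ge1$ you cannot start the damping at $t=0$ ``as before''. The induction only provides bounds after the waiting time. At $t=0$, time derivatives of $u$ involve $\nu\Delta u_0$ with $u_0$ merely $C^\alpha$, so nothing controls $f_0(0)$ independently of $\nu$. Your exponent in $e^{-x}\lesssim x^{-(m+1)}$ would require $\|f_0(0)\|_{L^\infty}\lesssim(\nu(1+\delta)^{2k})^m\|u\|(1+\delta)^{-\alpha k}$. You never establish this, and it fails in general for $m\ge2$.

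The paper instead restarts the Gronwall argument at $t_1=\tfrac12 a\|u\|_{L^\infty\dot C^\alpha}^{-2/(1+\alpha)}\nu^{(1-\alpha)/(1+\alpha)}$. There the order-$m$ case of (M1) is available by applying the induction hypothesis with $a/2$, which is why every estimate is proved for all $a>0$. It then uses $t-t_1\ge t/2$ and $e^{-x}\lesssim x^{-(j_0(d)+1)}$ for $Q$. The factor $a^{-(m+1)}$ is $a^{-1}$ from the damping times $a^{-m}$ from (M1).

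Two smaller points:
\begin{itemize}
\item Your announced order ``(M3), (M1), (M2)'' must be (M1), (M2), (M3). The top term $\nabla D^m_{\lesssim k,t}P_{\lesssim k}u$ of (M3) is obtained by summing (M2) at the same order.
\item For $d\ge4$ the pair $f_0,f_1$ does not suffice. You need $f_0,\dots,f_{j_0(d)}$ and $Q=\prod_j\|f_j\|_{L^\infty}$, via Lemma~\ref{lem:generalized thin annulus lemma}.
\end{itemize}
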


We will break the proof of this theorem into steps and finish the proof in the final section.

\section{Proof of Theorem \ref{Main lemma}}\label{sec:proof of main lemma}

We finish the technical calculations of the estimates in Theorem \ref{Main lemma}. It relies on various commutator formulas, and we will introduce them when we need to. The estimates \eqref{M.1} to \eqref{M.4} follows rather quickly from the induction hypothesis, while \eqref{M.5} and \eqref{M.6} require a more delicate computation.

\subsection{M1 to M4}

We will prove the inductive step of \eqref{M.1} to \eqref{M.4} in this subsection, which follows from some simple commutator formulas and the induction hypothesis. \eqref{M.5} and \eqref{M.6} are harder and we will postpone them to subsequent subsections.

Before starting the proof, we record some useful commutator formulas, whose proofs will be given in Appendix \ref{Commutator estimate proof}. In the following formulas, $\tr$ denotes a suitable trace operator for the given tensors.
\begin{lemma}
Let $X, Y:[0,T)\times\T^d\to\R^d$ be vector fields, and $f:[0,T)\times\T^d\to\R$ be a function. Define $D_t=\partial_t+X\cdot\nabla$. The following formulas hold:
\begin{align}
\perm(\nabla^{c+1},D_t^w)f=\sum C\tr \nabla^{c_0+1}D_t^{w_0}f\otimes\bigotimes_{i\ge 1}\nabla^{c_i+1}D_t^{w_i}X,\tag{C1}\label{C.1}
\end{align}
where $c,w,c_i,w_i\ge 0$, $\sum_{i\ge 0}c_i=c$, and $w_0+\sum_{i\ge 1}(w_i+1)=w$. Moreover, $[\nabla^{c+1},D_t^w]f$ can also be written as a sum of terms of the same expression, with $w_0<w$. On the other hand,
\begin{align}
\perm(Y\cdot\nabla,D_t^w)f=\sum C\tr\nabla D_t^{w_0}f\otimes D_t^{w_1}Y\otimes\bigotimes_{i\ge 2}\nabla D_t^{w_i}X,\tag{C2}\label{C.2}
\end{align}
where $w,w_i\ge 0$ and $w_0+w_1+\sum_{i\ge 2}(w_i+1)=w$.

\end{lemma}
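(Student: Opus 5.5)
The plan is to prove both formulas by induction on the total number of operators in the permutation. We show that the class of expressions on the right-hand side is stable under applying one more $\nabla$ or one more $D_t$ on the left. The whole argument rests on two pointwise identities, both obtained by expanding $D_t=\partial_t+X\cdot\nabla$ in coordinates:
\[\partial_i D_t g = D_t\partial_i g+\partial_i X^j\,\partial_j g,\qquad D_t(A\otimes B)=D_tA\otimes B+A\otimes D_tB.\]
The first identity also holds with $g$ replaced by any tensor. It already has the shape \eqref{C.1} with $c=0$, $w=1$, namely $\nabla D_t g=D_t\nabla g+\tr\nabla g\otimes\nabla X$. Iterating it gives $[D_t,\nabla^{n}]g$ as a sum of terms $\tr\nabla^{a+1}g\otimes\nabla^{b+1}X$ with $a+b=n-1$ (absorbing further contractions into $\tr$). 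That is the case $w=1$ of \eqref{C.1}, with one $X$-factor carrying $w_1=0$.

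\textbf{Proof of \eqref{C.1}.} A word in $\perm(\nabla^{c+1},D_t^w)$ applied to $f$ has the form $\mathcal{O}\,\nabla D_t^{j}f$, where $\nabla D_t^{j}f$ comes from the rightmost $\nabla$ and the $j$ operators $D_t$ to its right. This starting expression already has the required form, with $c_0=0$, $w_0=j$ and no $X$-factors. Now track what happens when one more operator is applied on the left.
\begin{itemize}
\item Applying $\nabla$: by Leibniz, the derivative lands on exactly one factor. This raises one $c_i$ by $1$, so $c$ goes up by $1$ and the constraint $w_0+\sum_{i\ge1}(w_i+1)=w$ is unchanged.
\item Applying $D_t$: again by Leibniz it hits one factor, say $\nabla^{c_i+1}D_t^{w_i}Z$ with $Z\in\{f,X\}$. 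We rewrite $D_t\nabla^{c_i+1}D_t^{w_i}Z=\nabla^{c_i+1}D_t^{w_i+1}Z+[D_t,\nabla^{c_i+1}]D_t^{w_i}Z$.
  \begin{itemize}
  \item In the first term $w_i$ increases by $1$.
  \item By the $w=1$ formula above, the commutator term splits the $c_i+1$ derivatives between $D_t^{w_i}Z$ and one new factor $\nabla^{b+1}X$ carrying $w_{\rm new}=0$. This new factor contributes $w_{\rm new}+1=1$ to the count.
  \end{itemize}
  In both cases $w_0+\sum_{i\ge1}(w_i+1)$ rises by exactly $1$, and $\sum c_i$ is preserved.
\end{itemize}
Since the constants $C$ depend only on $c$ and $w$, induction closes the proof of \eqref{C.1}.

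\textbf{The ``moreover'' clause.} In the expansion above, the only terms with no $X$-factor (equivalently, with $w_0=w$) come from never taking a commutator branch. Each word therefore produces exactly one such term, equal to $\nabla^{c+1}D_t^wf$ with coefficient $1$. Applying this to $\nabla^{c+1}D_t^w f$ and to $D_t^w\nabla^{c+1}f$ and subtracting, the $w_0=w$ terms cancel in $[\nabla^{c+1},D_t^w]f$. Every surviving term has $w_0<w$.

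\textbf{Proof of \eqref{C.2}.} We argue in the same way, starting from the single $Y\cdot\nabla$ in the word. To its right sits $D_t^jf$, so we begin with $Y\cdot\nabla D_t^jf=\tr\nabla D_t^jf\otimes Y$. Each additional $D_t$ applied on the left is distributed by Leibniz:
\begin{itemize}
\item on $Y$, it raises $w_1$;
\item on an $X$-factor $\nabla D_t^{w_i}X$ with $i\ge2$, it raises $w_i$ plus a commutator term;
\item on $\nabla D_t^{w_0}f$, it raises $w_0$ plus a commutator term.
\end{itemize}
Each commutator $[D_t,\nabla]$ contributes a new factor $\nabla X$ with $w_{\rm new}=0$. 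The gradient count never changes, because $Y\cdot\nabla$ supplies the only $\nabla$. Hence all factors keep a single derivative, and $w_0+w_1+\sum_{i\ge2}(w_i+1)$ increases by $1$ each time.

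\textbf{Main obstacle.} The only delicate point is the bookkeeping: verifying that the index constraints are exactly preserved, and that the leading term appears with coefficient exactly $1$ so that it cancels in the commutator statement. The analysis itself is trivial, since everything is smooth for positive times.
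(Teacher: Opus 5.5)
Your proposal is correct and follows the paper's route: induction on the number of operators, where $\nabla$ raises one $c_i$ and $D_t$ is handled by the identity for $[D_t,\nabla^{\tilde c}]$, and, for (C2), induction on the number of $D_t$'s to the left of $Y\cdot\nabla$. The only differences are small. For the ``moreover'' clause you cancel the unique leading term $\nabla^{c+1}D_t^wf$, where the paper instead uses the recursion $[\nabla^{c+1},D_t^{w}]=[\nabla^{c+1},D_t]D_t^{w-1}+D_t[\nabla^{c+1},D_t^{w-1}]$; both work, and your start from $\nabla D_t^j f$ handles words ending in $D_t$ more explicitly than the paper's base case.
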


Let us begin the proof. We proceed by induction on $m$. For $m=0$, the estimates \eqref{M.1}, \eqref{M.2}, \eqref{M.3}, and \eqref{M.4} are straightforward. \eqref{M.5} and \eqref{M.6} have been proven in \eqref{Commutator base case} and \eqref{M.5 base case}. (Strictly speaking, we have only proved \eqref{M.6} for dimension $2,3$ in previous sections, but the proof easily generalizes to higher dimensions and will be presented in the inductive proof of \eqref{M.6}.)

We now proceed to the inductive step. Suppose that for a given integer $0\le m<\frac{2\alpha}{1-\alpha}$, the six estimates in Theorem \ref{Main lemma} hold for all integers up to $m$. We want to prove that \eqref{M.1}, \eqref{M.2}, \eqref{M.3}, and \eqref{M.4} are true for $m+1$. If, additionally, $m+1<\frac{2\alpha}{1-\alpha}$, we also prove \eqref{M.5} and \eqref{M.6}.

\textit{Estimate} \eqref{M.1}: Recall the evolution of Littlewood-Paley pieces \eqref{Littlewood-Paley evolution}:
\[D_{\le k,t}P_ku-\nu \Delta P_ku=F_k=-P_ku\cdot\nabla P_{\le k-1}u-\nabla P_kp-\Div R_{\le k}+\Div R_{\le k-1}.\]
Applying $D_{\le k,t}^m$ to the equation, we can write
\[D_{\le k,t}^{m+1}P_ku=\nu\Delta D_{\le k,t}^{m}P_ku+[D_{\le k,t}^m,\nu\Delta]P_ku+D_{\le k,t}^m F_{k}.\]
The first term can be bounded by $C\nu (1+\delta)^{2k}\|D^m_{\le k,t}P_ku\|_{L^\infty}$, which we control using the order $m$ case of \eqref{M.6}. The second term, the pressure term, and the commutator term follow from the order $m$ cases of \eqref{M.4} and \eqref{M.5}. The remaining term, using the product rule, can be bounded by
\begin{align}
\|D_{\le k,t}^{m}(P_ku\cdot\nabla P_{\le k-1}u)\|_{L^\infty}\le\sum_{w_1+w_2=m} C\|D_{\le k,t}^{w_1}P_ku\|_{L^\infty}\|D_{\le k,t}^{w_2}\nabla P_{\le k-1}u\|_{L^\infty}.
\end{align}
The desired bound for $\|D_{\le k,t}^{m+1}P_ku\|_{L^\infty}$ then follows from \eqref{M.1} and \eqref{M.3} up to order $m$. 

It remains to show that this implies the general case $\|D_{\lesssim k,t}^{m+1}P_{\approx k}u\|_{L^\infty}$. By finite sums and shifts of indices, we may assume that we are differentiating $P_ku$ instead of $P_{\approx k}u$ . Observe that
\begin{align}
D_{\lesssim k,t}^{m+1}P_{k}u-D_{\le k,t}^{m+1}P_ku= \sum \perm(P_{\approx k}u\cdot\nabla,D_{\lesssim k,t}^{m})P_ku.
\end{align}
By applying \eqref{C.2}, we can express it as
\begin{align*}
\perm(P_{\approx k}u\cdot\nabla,D_{\lesssim k,t}^m)P_{k}u=\sum C\tr\nabla D_{\lesssim k,t}^{w_0}P_{k}u\otimes D_{\lesssim k,t}^{w_1}P_{\approx k}u\otimes\bigotimes_{i\ge 2}\nabla D_{\lesssim k,t}^{w_i}P_{\lesssim k}u,
\end{align*}
where $w_0+w_1+\sum_{i\ge 2}(w_i+1)= m$. This formula yields the bound
\begin{align}
\|\perm(P_{\approx k}u\cdot\nabla,D_{\lesssim k,t}^m)P_{k}u\|_{L^\infty}\lesssim\sum \|\nabla D_{\lesssim k,t}^{w_0}P_{ k}u\|_{L^\infty}\cdot\|D_{\lesssim k,t}^{w_1}P_{\approx k}u\|_{L^\infty}\cdot\prod_{i\ge 2}\|\nabla D_{\lesssim k,t}^{w_i}P_{\lesssim k}u\|_{L^\infty}.
\end{align}
In any case, all $w_i\le m$, so we can apply \eqref{M.1} and \eqref{M.3} up to order $m$ to obtain the bound
\begin{align}
\|D_{\lesssim k,t}^{m+1}P_{\approx k}u\|_{L^\infty}\lesssim \sum (a^{-\sum_{i\ge 0}w_i}+1)\|u\|^{m+2}_{L^\infty\dot{C}^\alpha}(1+\delta)^{((m+1)(1-\alpha)-\alpha)k}.
\end{align}
The worst case occurs when $w_0+w_1=m$, i.e., there are no $w_i$ terms for $i\ge 2$, which yields the desired bound. We omit similar calculations from now on.

\textit{Estimate} \eqref{M.2}:
Using a telescoping sum, we can write
\begin{align}
\delta_{(k)}D_{\lesssim k,t}^{m+1}P_{\lesssim k}u=D_{\lesssim k,t}^{m+1} P_{\approx k}u+\sum_{w} D_{\lesssim k,t}^w(P_{\approx k}u\cdot\nabla)D_{\lesssim k,t}^{m-w} P_{\lesssim k}u.
\end{align}
The first term can be bounded using the order $m+1$ case of \eqref{M.1}. The second term takes the form $\perm(P_{\approx k}u\cdot\nabla,D_{\lesssim k,t}^m)P_{\lesssim k}u$. Applying \eqref{C.2}, together with \eqref{M.1} and \eqref{M.3} up to order $m$, yields the desired bound.

\textit{Estimate} \eqref{M.3}: We apply \eqref{C.1} with $c=0$. All $c_i=0$, so we can write 
\[\perm(\nabla,D_{\lesssim k,t}^{m+1})P_{\lesssim k}u=\sum C\tr \nabla D_{\lesssim k,t}^{w_0}P_{\lesssim k}u\otimes\bigotimes_{i\ge 1}\nabla D_{\lesssim k,t}^{w_i}P_{\lesssim k}u,\]
where $w_0+\sum_{i\ge 1}(w_i+1)=m+1$. For terms where all $w_i\le m$, we apply \eqref{M.3} up to order $m$. In the case where $w_0=m+1$, which corresponds to the term $\nabla D_{\lesssim k,t}^{m+1}P_{\lesssim k}u$, we express it as a sum of increments:
\begin{align}
\nabla D_{\lesssim k,t}^{m+1}P_{\lesssim k}u=\sum_{l\le k}\nabla\delta_{(l)}D_{\lesssim l,t}^{m+1}P_{\lesssim l}u.
\end{align}
Using the newly established $m+1$ case of \eqref{M.2}, we obtain
\begin{align*}
\|\nabla D_{\lesssim k,t}^{m+1}P_{\lesssim k}u\|_{L^\infty}&\lesssim\sum_{l\le k}(a^{-(m+1)}+1)\|u\|_{L^\infty\dot{C}^\alpha}^{m+2}(1+\delta)^{(m+2)(1-\alpha)l}\\
&\lesssim (a^{-(m+1)}+1)\|u\|_{L^\infty\dot{C}^\alpha}^{m+2}(1+\delta)^{(m+2)(1-\alpha)k},
\end{align*}
since $(m+2)(1-\alpha)>0$.

\textit{Estimate} \eqref{M.4}:
We apply \eqref{C.1} and obtain the bound
\[\|[\nu\Delta,D_{\le k,t}^{m+1}]P_ku\|_{L^\infty}\lesssim \sum \nu\|\nabla^{c_0+1} D_{\le k,t}^{w_0}P_ku\|_{L^\infty}\cdot\prod_{i\ge 1}\|\nabla^{c_i+1} D_{\le k,t}^{w_i}P_{\le k}u\|_{L^\infty},\]
where $\sum_{i\ge 0}c_i=1$, $w_0+\sum_{i\ge 1}(w_i+1)=m+1$, but $w_0\le m$. Since all $w_i\le m$, applying \eqref{M.3} and \eqref{M.6} up to order $m$ eliminates the $\nu$ dependence and yields
\[\|[\nu\Delta,D_{\le k,t}^{m+1}]P_ku\|_{L^\infty}\lesssim(a^{-(\sum_{i\ge0}w_i+1)}+1)\|u\|^{m+3}_{L^\infty\dot{C}^\alpha}(1+\delta)^{((m+2)(1-\alpha)-\alpha)k}.\]
The worst case occurs when $w_0=m,w_1=0$, and there are no $w_i$ for $i\ge 2$, which yields the desired estimate.

\subsection{Forcing Term Estimates}

In this subsection, we will prove the order $m+1$ case of \eqref{M.5}, assuming the cases proven in the last subsection. \eqref{M.5} is essentially a forcing term estimate for the transport-diffusion equation satisfied by $D_{\lesssim k}^{m+1}P_{\approx k}u$. 

First, we need to observe a commutator formula for commuting material derivatives with convolution kernel, which relies on a delicate derivation and will play a central role in the estimates \eqref{M.5} and \eqref{M.6}. We present the detailed proof of this lemma below, as variants of this formula will be required when dealing with the full term $R_{\lesssim k}$.

\begin{lemma}\label{Convolution}
Let $X:[0,T)\times\T^d\to\R^d$ be a vector field, $K:\T^d\to\R$ be a convolution kernel, and $f:[0,T)\times\T^d\to\R$ be a function. Define $D_t=\partial_t+X\cdot\nabla$. The following formula holds:
\begin{align}
&D_t^w(x)\int_{\T^d} K(y)f(x-y)\,dy=\nonumber\\
&\sum C\tr\int_{\T^d} \nabla^{c}K(y)\otimes\bigotimes_{i=1}^{c}((D_t^{\tilde{w}_i}X)(x)-(D_t^{\tilde{w}_i}X)(x-y))\otimes\bigotimes_{i\ge 1}(\nabla D^{w_i}_t X)(x-y)\otimes D_t^{w_0}f(x-y)\,dy,\tag{C3}
\end{align}
where $w,w_i,\tilde{w}_i\ge0$, $0\le c\le w$, and $w_0+\sum_{i\ge 1}(w_i+1)+\sum_{i=1}^{c}(\tilde{w}_i+1)=w$. Moreover, $[D_t^w,K*]$ can also be written as a sum of the same expressions, with $w_0<w$.

As a result,
\begin{align}
\|D_t^w(K*f)\|_{L^\infty}\lesssim \sum\|D_t^{w_0}f\|_{L^\infty}\cdot\|\nabla^{c}K(y)y^{\otimes c}\|_{L^1}\cdot\prod_{i\ge 1}\|\nabla D_t^{w_i}X\|_{L^\infty},
\end{align}
where $w,w_i\ge0$, $0\le c\le w$, and $w_0+\sum_{i\ge 1}(w_i+1)=w$.
\begin{proof}
Clearly, the theorem is true for $w=0$. We proceed by induction on $w$ and apply $D_t(x)$ to the above expression. The product rule yields two terms:
\begin{align}
\tr\int \nabla^{c}K(y)\otimes\bigotimes_{i=1}^{c}((D_t^{\tilde{w}_i}X)(x)-(D_t^{\tilde{w}_i}X)(x-y))\otimes D_t(x)\left(\bigotimes_{i\ge 1}(\nabla D^{w_i}_t X)(x-y)\otimes D_t^{w_0}f(x-y)\right)\,dy,\label{product rule 1}\\
\tr\int \nabla^{c}K(y)\otimes D_t(x)\left(\bigotimes_{i=1}^{c}((D_t^{\tilde{w}_i}X)(x)-(D_t^{\tilde{w}_i}X)(x-y))\right)\otimes\bigotimes_{i\ge 1}(\nabla D^{w_i}_t X)(x-y)\otimes D_t^{w_0}f(x-y)\,dy.\label{product rule 2}
\end{align}

\textit{Term \eqref{product rule 1}:}
Write $D_t(x)=D_t(x-y)+(X(x)-X(x-y))\cdot\nabla$ and consider the two resulting terms separately.

If $D_t(x-y)$ falls on $D_t^{w_0}f(x-y)$ we increase $w_0$ by 1, and if it falls on $\nabla D_t^{w_i}X(x-y)$ we can write it as $D_t\nabla D_t^{w_i}X=\nabla D_t^{w_{i}+1}X-\nabla X\cdot\nabla D^{w_i}_tX$. The first term increases $w_i$ by 1, while the second term produces a new $w_{i^\prime}=0$. 

Consider the term $(X(x)-X(x-y))\cdot\nabla g(x-y)$, where $g=\bigotimes_{i\ge 1}(\nabla D^{w_i}_t X)\otimes D_t^{w_0}f$. Rewriting $\nabla_x(g(x-y))=-\nabla_y(g(x-y))$ and performing integration by parts with respect to $y$, the integral becomes
\[\pm\tr\int \nabla_y\left(\nabla^{c}K(y)\otimes\bigotimes_{i=1}^{c}((D_t^{\tilde{w}_i}X)(x)-(D_t^{\tilde{w}_i}X)(x-y))\otimes(X(x)-X(x-y))\right)\otimes g(x-y)\,dy.\]
If the derivative $\nabla_y$ falls on $\nabla^c K$, we increase $c$ by $1$. Note that we also introduce a new $\tilde{w}_{c+1}=0$.

If the derivative falls on one of $(D_t^{\tilde{w}_i}X)(x)-(D_t^{\tilde{w}_i}X)(x-y)$, the first term vanishes so that we produce a term $\nabla D_t^{\tilde{w}_i}X(x-y)$. This produces a new $w_{i^\prime}=\tilde{w}_i$, while the original $\tilde{w}_i$ becomes zero, as we have a new term $X(x)-X(x-y)$.

\textit{Term \eqref{product rule 2}:}
$D_t(x)$ falls on either one of the terms $(D_t^{\tilde{w}_i}X)(x)-(D_t^{\tilde{w}_i}X)(x-y)$, and we write it as
\[(D_t(x)(D_t^{\tilde{w}_i}X)(x)-D_t(x-y)(D_t^{\tilde{w}_i}X)(x-y))+(D_t(x-y)-D_t(x))(D_t^{\tilde{w}_i}X)(x-y).\]
The first term is simply $(D_t^{\tilde{w}_i+1}X)(x)-(D_t^{\tilde{w}_i+1}X)(x-y)$, which increases $\tilde{w}_i$ by 1.

The second term can be written as $(X(x-y)-X(x))\cdot\nabla D_t^{\tilde{w}_i}X(x-y)$, which again introduces a new $w_{i^\prime}=\tilde{w}_i$ and changes $\tilde{w}_i$ to zero. This finishes the proof of the formula. 

For the commutator $[D_t^w,K*]f$, we again proceed by induction on $w$. For $w=1$, it is clear from the above proof that the term where $D_t$ falls on $f$ is canceled out. For general $w$, use $[D_t^w,K*]=D_t[D_t^{w-1},K*]+[D_t,K*]D_t^{w-1}$. It is now clear by the same induction that every term has $w_0<w$.

Finally, the fundamental theorem of calculus yields 
\[D_t^{\tilde{w}_i}X(x)-D_t^{\tilde{w}_i}X(x-y)=\int_0^1y\cdot\nabla D_t^{\tilde{w}_i}X(x-sy)\,ds.\]
and the bound readily follows.
\end{proof}
\end{lemma}

From now on, we assume $m+1<\frac{2\alpha}{1-\alpha}$.

\textit{Estimate} \eqref{M.5}:
First, we use \eqref{pressure decomposition} to write:
\[\nabla P_{\approx k}p=P_{\approx k}(-\Delta)^{-1}\partial_i\partial_j\nabla R_{\lesssim k}^{ij}+P_{\approx k}\nabla(-\Delta)^{-1}(\partial_jP_{\lesssim k}u^i\partial_iP_{\lesssim k}u^j).\]
For the second term, we apply Lemma \ref{Convolution} to the operator $P_{\approx k}\nabla(-\Delta)^{-1}$ and bound
\begin{align}
\begin{split}
&\|D_{\lesssim k,t}^{m+1}P_{\approx k}\nabla(-\Delta)^{-1}(\partial_jP_{\lesssim k}u^i\cdot\partial_iP_{\lesssim k}u^j)\|_{L^\infty}\\
&\lesssim \sum\|D_{\lesssim k,t}^{w_0}(\partial_jP_{\lesssim k}u^i\cdot\partial_iP_{\lesssim k}u^j)\|_{L^\infty}\cdot\|\nabla^c K(y)y^{c}\|_{L^1}\cdot\prod_{i\ge 1}\|\nabla D_{\lesssim k,t}^{w_i}P_{\lesssim k}u\|_{L^\infty},
\end{split}
\end{align}
where $w_0+\sum_{i\ge 1}(w_i+1)=m+1$. Note that $\|\nabla^cK(y)y^{\otimes c}\|_{L^1}\lesssim_c(1+\delta)^{-k}$ here.
Each $w_i\le m+1$, and we can apply \eqref{M.3} up to order $m+1$ to obtain the bound
\[(a^{-\sum_{i\ge 0} w_i}+1)\|u\|^{m+3}_{L^\infty\dot{C}^\alpha}(1+\delta)^{((m+2)(1-\alpha)-\alpha)k}.\]
The sum $\sum_{i\ge 0}w_i$ is maximized when $w_0=m+1$, where we get the claimed estimate.

Applying Lemma \ref{Convolution} again to $P_{\approx k}(-\Delta)^{-1}\partial_i\partial_j$, we reduce the estimate for the pressure to the estimate for the commutator term:
\[\|D_{\lesssim k,t}^{w}\nabla R_{\lesssim k}\|_{L^\infty}\lesssim (a^{-w}+1)\|u\|^{w+2}_{L^\infty\dot{C}^\alpha}(1+\delta)^{((w+1)(1-\alpha)-\alpha)k}.\]
for all $w\le m+1$. Applying \eqref{C.1} yields the bound
\[\|D_{\lesssim k,t}^{w}\nabla R_{\lesssim k}\|_{L^\infty}\lesssim \sum \|\nabla D^{w_0}_{\lesssim k,t}R_{\lesssim k}\|_{L^\infty}\cdot\prod_{i\ge 1}\|\nabla D_{\lesssim k,t}^{w_i}P_{\lesssim k}u\|_{L^\infty}.\]
where $w_0+\sum_{i\ge 1}(w_i+1)=w$. Using \eqref{M.3} up to order $m$, the problem reduces to establishing the bound
\begin{align}
\|D_{\lesssim k,t}^{w} R_{\lesssim k}\|_{L^\infty}\lesssim (a^{-w}+1)\|u\|^{w+2}_{L^\infty\dot{C}^\alpha}(1+\delta)^{(w(1-\alpha)-2\alpha)k},
\end{align}
for all $w\le m+1$.

Up to an index shift, it suffices to bound $D_{\lesssim k,t}^{w}R_{\le k}$. Recall the decomposition \eqref{Commutator decomposition} of $R_{\le k}$:
\begin{align*}
R_{\le k,HH}(x)&=\sum_{h>k}P_{\le k}(P_{\approx h}u)^2(x),\\
R_{\le k,HL}(x)&=\int \varphi_{\le k}(y)P_{\approx k}u(x-y)\otimes (P_{\le k}u(x)-P_{\le k}u(x-y))\,dy,\\
R_{\le k,LL}(x)&=\int \varphi_{\le k}(y)(P_{\le k}u(x)-P_{\le k}u(x-y))\otimes (P_{\le k}u(x)-P_{\le k}u(x-y))\,dy.
\end{align*}

We first estimate the term $R_{\le k,HL}$. By modifying the proof of Lemma \ref{Convolution}, we obtain the expression 
\begin{align*}
D_{\lesssim k,t}^{w}(x)R_{\le k,HL}(x)\\
=\sum C\tr\int \nabla^{c}\varphi_{\le k}(y)&\otimes\bigotimes_{i=1}^{c}((D_{\lesssim k,t}^{\tilde{w}_i}P_{\lesssim k}u)(x)-(D_{\lesssim k,t}^{\tilde{w}_i}P_{\lesssim k}u)(x-y))\otimes\bigotimes_{i\ge 2}(\nabla D^{w_i}_{\lesssim k,t} P_{\lesssim k}u)(x-y)\\
&\otimes D_{\lesssim k,t}^{w_0}P_{\approx k}u(x-y)\otimes ((D_{\lesssim k,t}^{w_1}P_{\le k}u)(x)-(D_{\lesssim k,t}^{w_1}P_{\le k}u)(x-y))\,dy\\
+\sum C\tr\int \nabla^{c-1}\varphi_{\le k}(y)&\otimes\bigotimes_{i=1}^{c}((D_{\lesssim k,t}^{\tilde{w}_i}P_{\lesssim k}u)(x)-(D_{\lesssim k,t}^{\tilde{w}_i}P_{\lesssim k}u)(x-y))\otimes\bigotimes_{i\ge 2}(\nabla D^{w_i}_{\lesssim k,t} P_{\lesssim k}u)(x-y)\\
&\otimes D_{\lesssim k,t}^{w_0}P_{\approx k}u(x-y)\otimes (\nabla D_{\lesssim k,t}^{w_1}P_{\le k}u)(x-y)\,dy,
\end{align*}
where $w_0+w_1+\sum_{i\ge 2}(w_i+1)+\sum_{i\ge 1}(\tilde{w}_i+1)=w$, and $c\ge 1$ in the second term. This can be proved via a similar inductive argument. The formula yields the bound
\begin{align}
\|D^{w}_{\lesssim k,t}R_{\le k,HL}\|_{L^\infty}\lesssim \sum_{w_i,c}\|D_{\lesssim k,t}^{w_0}P_{\approx k}u\|\cdot\|\nabla D_{\lesssim k,t}^{w_1}P_{\le k}u\|\cdot\|\nabla^c \varphi_{\le k}(y)y^{\otimes c+1}\|_{L^1}\cdot\prod_{i\ge 2}\|\nabla D_{\lesssim k,t}^{w_i}P_{\lesssim k}u\|,
\end{align}
where $w_0+w_1+\sum_{i\ge 2}(w_i+1)=w$. We get the desired estimate for $w\le m+1$ by applying \eqref{M.1} and \eqref{M.3} up to order $m+1$. 

The term $R_{\le k,LL}$ can treated similarly. This time we have four terms in the formula, and the bound we get is
\[\|D^{w}_{\lesssim k,t}R_{\le k,LL}\|_{L^\infty}\lesssim \sum_{w_i,c}\|\nabla D_{\lesssim k,t}^{w_0}P_{\le k}u\|_{L^\infty}\cdot\|\nabla D_{\lesssim k,t}^{w_1}P_{\le k}u\|_{L^\infty}\cdot\|\nabla^c \varphi_{\le k}(y)y^{\otimes c+2}\|_{L^1}\cdot\prod_{i\ge 2}\|\nabla D_{\lesssim k,t}^{w_i}P_{\lesssim k}u\|_{L^\infty},\]
where $w_0+w_1+\sum_{i\ge 2}(w_i+1)=w$. This yields the same estimate.

The term $R_{\le k,HH}$ is more difficult. We must generate $D_{\le h,t}$ on the term $P_{\approx h}u$ to obtain the correct estimate. In \cite{Ise23}, he proposed a clever trick, which is to write
\[D_{\lesssim k,t}P_{\lesssim k}=P_{\lesssim k}(\partial_t+P_{\lesssim k}u\cdot\nabla)P_{\lesssim k}=P_{\lesssim k}(D_{\le h,t}-P_{[k+b,h]}u\cdot\nabla)P_{\lesssim k}=P_{\lesssim k}D_{\le h,t}P_{\lesssim k}-P_{\lesssim k} (P_{\approx k}u\cdot\nabla)P_{\lesssim k}.\]
Note that in the first equality, we apply a $P_{\lesssim k}$ for free since applying the material derivative only increase the frequency radius by some constant multiple. In the last equality, we can replace $P_{[k+b,h]}$ by $P_{\approx k}$ in the last term due to frequency interactions, as it is sandwiched by two $P_{\lesssim k}$.

Our goal is to show the following estimate:
\begin{align}
\|D_{\lesssim k,t}^{w}P_{\lesssim k}(P_{\approx h}u)^2\|_{L^\infty}\lesssim (a^{-w}+1)\|u\|^{w+2}_{L^\infty\dot{C}^\alpha}(1+\delta)^{(w(1-\alpha)-2\alpha)h},
\end{align}
for all $w\le m+1$. Since $m+1<\frac{2\alpha}{1-\alpha}$, after summing over $h>k$, we obtain the desired bound.

We employ this idea and write
\[D_{\lesssim k,t}^{w}P_{\lesssim k}(P_{\approx h}u)^2=D_{\lesssim k,t}^{w-1}P_{\lesssim k}D_{\le h,t}P_{\lesssim k}(P_{\approx h}u)^2+D^{w-1}_{\lesssim k,t}P_{\lesssim k}(P_{\approx k}u\cdot\nabla)P_{\lesssim k}(P_{\approx h}u)^2.\]

To simplify the second term, recall that applying Lemma \ref{Convolution} to the operator $P_{\lesssim k}$ yields:
\[\|D_{\lesssim k,t}^{w}P_{\lesssim k}g\|_{L^\infty}\lesssim \sum \|D_{\lesssim k,t}^{w_0}g\|_{L^\infty}\prod_{i\ge 1} \|D_{\lesssim k,t}^{w_i}\nabla P_{\lesssim k}u\|_{L^\infty},\]
where $w_0+\sum_{i\ge 1}(w_i+1)=w$. Observe that $(1+\delta)^{(w_i+1)(1-\alpha)k}\lesssim(1+\delta)^{(w_i+1)(1-\alpha)h}$. Consequently, we can estimate $D_{\lesssim k,t}^{w}P_{\lesssim k}g$ for $w\le m+1$ provided that we have the corresponding estimate for $D_{\lesssim k,t}^{w_0}g$ for $w_0\le m+1$.

From this, the estimate for the second term reduces to that of $D^{w}_{\lesssim k,t}(P_{\approx k}u\cdot\nabla)P_{\lesssim k}(P_{\approx h}u)^2$ for $w\le m$.
Observing that $D^w_{\lesssim k,t}(P_{\approx k}u\cdot\nabla)=\perm(P_{\approx k}u\cdot\nabla,D^w_{\lesssim k,t})$ and applying \eqref{C.2}, we obtain
\[\|D_{\lesssim k,t}^w(P_{\approx k}u\cdot\nabla)g\|_{L^\infty}\lesssim \sum_{w_i} \|\nabla D_{\lesssim k,t}^{w_0}g\|_{L^\infty}\cdot\|D_{\lesssim k,t}^{w_1}P_{\approx k}u\|_{L^\infty}\cdot\prod_{i\ge 2}\|\nabla D_{\lesssim k,t}^{w_i} P_{\lesssim k}u\|_{L^\infty},\]
where $w_0+w_1+\sum_{i\ge 2}(w_i+1)=w$. Since $g=P_{\lesssim k}(P_{\approx h}u)^2$ is frequency localized around $\lesssim (1+\delta)^k$, the $\nabla$ in the first term costs $(1+\delta)^k$. Multiplying this $(1+\delta)^k$ by the estimate for the second term gives the bound $(1+\delta)^{(w_1+1)(1-\alpha)k}\lesssim (1+\delta)^{(w_1+1)(1-\alpha)h}$. 

In summary, we reduce the problem to estimating the two terms $D_{\lesssim k,t}^wP_{\lesssim k}D_{\le h,t}P_{\lesssim k}(P_{\approx h}u)^2$ and $D_{\lesssim k,t}^wP_{\lesssim k}(P_{\approx h}u)^2$ for $w\le m$. Proceeding by downward induction on the power of $D_{\lesssim k,t}$ in front, it suffices to prove the following estimate:
\begin{align}
\|(P_{\lesssim k}D_{\le h,t})^wP_{\lesssim k}(P_{\approx h}u)^2\|_{L^\infty}\lesssim (a^{-w}+1)\|u\|^{w+2}_{L^\infty\dot{C}^\alpha}(1+\delta)^{(w(1-\alpha)-2\alpha)h},
\end{align}
for all $w\le m+1$. We now examine the operator $D_{\le h,t}P_{\lesssim k}$, and we need to be careful due to their different frequency supports. However, Lemma \ref{Convolution} yields an estimate of the form:
\[\|D_{\le h,t}^wP_{\lesssim k}g\|_{L^\infty}\le\sum\|D_{\le h,t}^{w_0}g\|_{L^\infty}\cdot\|\nabla^c\varphi_{\lesssim k}(y)y^{\otimes c}\|_{L^1}\cdot\prod_{i\ge 1}\|\nabla D_{\le h,t}^{w_i}P_{\le h}u\|_{L^\infty},\]
where $w_0+\sum_{i\ge 1}(w_i+1)=w$. For all $c\ge0$, we have the estimate $\|\nabla^c\varphi_{\lesssim k}(y)y^{\otimes c}\|_{L^1}\lesssim_c 1$. Note that if we had not performed integration by parts in Lemma \ref{Convolution}, then the derivatives on $K$ would act on the terms $g$ and $P_{\le h}u$, which have higher frequency and give a worse estimate. 

Finally, it remains to show that
\[\|D_{\le h,t}^w(P_{\approx h}u)^2\|_{L^\infty}\lesssim (a^{-w}+1)\|u\|^{w+2}_{L^\infty\dot{C}^\alpha}(1+\delta)^{(w(1-\alpha)-2\alpha)h},\]
for all $w\le m+1$. The bound follows directly from the product rule and \eqref{M.1}.

\subsection{Maximum Principle}

In this final subsection, we will prove \eqref{M.6} using the same maximum principle trick applied to $P_{[k-2,k+2]}D_{\le k}^{m+1}P_{k}u$.

\textit{Estimate} \eqref{M.6}:
First, note that if $(1+\delta)^{-k}\ge \|u\|_{L^\infty\dot{C}^\alpha}^{\frac{-1}{1+\alpha}}\nu^{\frac{1}{1+\alpha}}$, then $\nu^{-1}(1+\delta)^{-2k}\ge \|u\|_{L^\infty\dot{C}^\alpha}^{-1}(1+\delta)^{(\alpha-1)k}$, and the bound \eqref{M.1} is already stronger. Consequently, we may assume $(1+\delta)^{-k}\le \|u\|_{L^\infty\dot{C}^\alpha}^{\frac{-1}{1+\alpha}}\nu^{\frac{1}{1+\alpha}}$.

Recall again the evolution of Littlewood-Paley pieces \eqref{Littlewood-Paley evolution}:
\[D_{\le k,t} P_ku-\nu \Delta P_ku=F_k=-P_ku\cdot\nabla P_{\le k-1}u-\nabla P_kp-\Div R_{\le k}+\Div R_{\le k-1}.\]
Applying $P_{[k-2,k+2]}D_{\le k,t}^{m+1}$ to it, we obtain
\begin{align}
\begin{split}
&D_{\le k,t}P_{[k-2,k+2]}D_{\le k,t}^{m+1}P_ku-\nu\Delta P_{[k-2,k+2]}D_{\le k,t}^{m+1} P_ku\\
&=\tilde{F}_k=P_{[k-2,k+2]}D_{\le k,t}^{m+1}F_k+P_{[k-2,k+2]}[D_{\le k,t}^{m+1},\nu\Delta]P_ku+[D_{\le k,t},P_{[k-2,k+2]}]D_{\le k,t}^{m+1}P_ku.
\end{split}
\end{align}

From Lemma \ref{Convolution}, we know that
\[\|[D_{\le k,t},P_{[k-2,k+2]}]f\|_{L^\infty}\lesssim \|\nabla P_{\le k}u\|_{L^\infty}(\|\varphi_{[k-2,k+2]}(y)\|_{L^1}+\|\nabla\varphi_{[k-2,k+2]}(y)y\|_{L^1})\|f\|_{L^\infty}.\]
Using the product rule, \eqref{M.1}, \eqref{M.3}, \eqref{M.4}, and \eqref{M.5} up to order $m+1$, we obtain the bound
\begin{align}\label{Final forcing term}
\|\tilde{F}_k\|_{L^\infty}\lesssim(a^{-(m+1)}+1)\|u\|_{L^\infty\dot{C}^\alpha}^{m+3}(1+\delta)^{((m+2)(1-\alpha)-\alpha)k}.
\end{align}
Now let 
\[f_j=(1+\delta)^{-2jk}(-\Delta-(1+\delta)^{2k+8})^jP_{[k-2,k+2]}D_{\le k,t}^{m+1}P_ku\]
for $j=0,\dots,j_0(d)$. We have the following properties analogous to \eqref{eq:comparable}, \eqref{eq:either dissipates}, and \eqref{eq:same transport diffusion}. The proofs are the same.

\begin{enumerate}
    \item The $L^\infty$ norms of $f_j$ are comparable up to a constant $C$ depending only on $\delta$: For all $j,j^\prime$,
    \begin{align}\label{eq:comparable 2}
        \|f_j\|_{L^\infty}\le C\|f_{j^\prime}\|_{L^\infty}.
    \end{align}
    \item For each time $t$, either one of $f_j$ satisfies the following:
    \begin{align}\label{eq:either dissipates 2}
        \|(-\Delta-(1+\delta)^{2k})f_j\|_{L^\infty}\le\epsilon(1+\delta)^{2k}\|f_j\|_{L^\infty}.
    \end{align}
    \item $f_j$ satisfies a transport-diffusion equation with same forcing term estimates:
    \begin{align}\label{eq:same transport diffusion 2}
        \partial_tf_j+P_{\le k}u\cdot\nabla f_j-\nu\Delta f_j=F^{(j)},
    \end{align}
    where $\|F^{(j)}\|_{L^\infty}\lesssim (a^{-(m+1)}+1)\|u\|_{L^\infty\dot{C}^\alpha}^{m+3}(1+\delta)^{((m+2)(1-\alpha)-\alpha)k}$.
\end{enumerate}

When applying Lemma \ref{lem:generalized thin annulus lemma}, we choose $\epsilon=\frac{1}{2}$ and fix a $\delta=\delta(d)$ such that \eqref{eq:either dissipates 2} holds for functions whose frequency is supported in $\{\xi:(1+\delta)^{k-3}\le|\xi|\le (1+\delta)^{k+3}\}$.

Apply maximum principle to the equation for each $f_j$, we obtain
\[\partial_t\|f_j\|_{L^\infty}+\nu (1+\delta)^{2k} h_j(t)\|f_j\|_{L^\infty}\le C(a^{-(m+1)}+1)\|u\|_{L^\infty\dot{C}^\alpha}^{m+3}(1+\delta)^{((m+2)(1-\alpha)-\alpha)k}\]
with some $h_j\ge0$ and $\sum h_j\ge \frac{1}{2}$ for all times.
Consider the quantity $Q=\prod_{j=0}^{j_0(d)}\|f_j\|_{L^\infty}$ and argue as in \eqref{eq:qunatity Q}, we obtain
\[\partial_tQ+c\nu (1+\delta)^{2k}Q\le C(\nu(1+\delta)^{2k})^{-j_0(d)}\left((a^{-(m+1)}+1)\|u\|_{L^\infty\dot{C}^\alpha}^{m+3}(1+\delta)^{((m+2)(1-\alpha)-\alpha)k}\right)^{j_0(d)+1}.\]
Solving the ODE inequality, we see
\[Q(t)\le e^{-c\nu(1+\delta)^{2k}(t-t_1)}Q(t_1)+\left((a^{-(m+1)}+1)\|u\|_{L^\infty\dot{C}^\alpha}^{m+3}\nu^{-1}(1+\delta)^{-2k}(1+\delta)^{((m+2)(1-\alpha)-\alpha)k}\right)^{j_0(d)+1}.\]
Set $t_1=\frac{1}{2}a\|u\|_{L^\infty\dot{C}^\alpha}^{\frac{-2}{1+\alpha}}\nu^{\frac{1-\alpha}{1+\alpha}}$ and $t\ge a\|u\|_{L^\infty\dot{C}^\alpha}^{\frac{-2}{1+\alpha}}\nu^{\frac{1-\alpha}{1+\alpha}}$. Since the inductive estimates are valid for time $t_1$ (we have the freedom of choosing $a$), we can bound $Q(t_1)$ using $(j_0(d)+1)$-th power of \eqref{M.1}. Combining with $e^{-x}\lesssim x^{-(j_0(d)+1)}$, we can finally bound
\begin{align}
\|P_{[k-2,k+2]}D_{\le k,t}^{m+1}P_ku\|_{L^\infty}(t)\lesssim Q^{1/(j_0(d)+1)}\lesssim (a^{-(m+2)}+1)\|u\|_{L^\infty\dot{C}^\alpha}^{m+3}\nu^{-1}(1+\delta)^{-2k}(1+\delta)^{((m+2)(1-\alpha)-\alpha)k}.
\end{align}

Finally, note that
\[D_{\le k,t}^{m+1}P_ku=D_{\le k,t}^{m+1}P_{[k-2,k+2]}P_ku=[D_{\le k,t}^{m+1},P_{[k-2,k+2]}]P_ku+P_{[k-2,k+2]}D_{\le k,t}^{m+1}P_ku.\]
Using Lemma \ref{Convolution}, we can bound
\[\|[D_{\le k,t}^{m+1},P_{[k-2,k+2]}]P_ku\|_{L^\infty}\lesssim\sum\|D_{\le k,t}^{w_0}P_ku\|_{L^\infty}\cdot\|\nabla^c\varphi_{[k-2,k+2]}(y)y^{\otimes c}\|\cdot\prod_{i\ge 1}\|\nabla D_{\le k,t}^{w_i}P_{\le k}u\|,\]
where $w_0+\sum_{i\ge 1}(w_i+1)=m+1$, but $w_0\le m$. We use \eqref{M.6} up to order $m$ for the first term and $\eqref{M.3}$ up to order $m$ to bound
\[\|[D_{\le k,t}^{m+1},P_{[k-2,k+2]}]P_ku\|_{L^\infty}\lesssim(a^{-(m+1)}+1)\|u\|_{L^\infty\dot{C}^\alpha}^{m+3}\nu^{-1}(1+\delta)^{-2k}(1+\delta)^{((m+2)(1-\alpha)-\alpha)k},\]
which is no worse than the desired estimate. This completes the induction. \qed

We conclude the paper with a remark.

\begin{remark}
In \cite{Ise23}, many estimates are handled using iterated commutators. While we rely on a more direct approach—namely, rearranging the terms into suitable forms without using these concepts—iterated commutators provide a finer and more robust tool, which we also record here for completeness. Denoting $[A,]B=[A,B]=AB-BA$ and $[A,]^wB=[A,]([A,]^{w-1}B)$, we have the following estimates:
\begin{align}
\|[D_t,]^wT_K\|_{L^p\to L^p}\lesssim \sum\|\nabla^{c}K(y)y^{\otimes c}\|_{L^1}\cdot\prod_{i\ge 1}\|\nabla D_t^{w_i}X\|_{L^\infty},
\end{align}
where $w,w_i\ge0$, $0\le c\le w$, and $\sum_{i\ge 1}(w_i+1)=w$. Here $T_Kf(x)=\int_{\T^d} K(y)f(x-y)\,dy$ denotes the convolution operator.

Furthermore, we can express $[D_t,]^w(Y\cdot\nabla)=Z\cdot\nabla$, with
\begin{align}
\|Z\|_{L^\infty}\lesssim \sum\|D_t^{w_0}Y\|_{L^\infty}\prod_{i\ge 1}\|\nabla D_t^{w_i}X\|_{L^\infty},
\end{align}
where $w,w_i\ge0$ and $w_0+\sum_{i\ge 1}(w_i+1)=w$.

To prove the first bound, we modify the proof of Lemma \ref{Convolution} to express $([D_t,]^{w}T_K)f$ as a sum of terms of the following form
\[\tr\int \nabla^{c}K(y)\otimes\bigotimes_{i=1}^{c}((D_t^{\tilde{w}_i}X)(x)-(D_t^{\tilde{w}_i}X)(x-y))\otimes\bigotimes_{i\ge 1}(\nabla D^{w_i}_t X)(x-y)\otimes f(x-y)\,dy,\]
where $w,w_i,\tilde{w}_i\ge0$, $0\le c\le w$, and $\sum_{i\ge 1}(w_i+1)+\sum_{i=1}^{c}(\tilde{w}_i+1)=w$. Applying $[D_t,]$ to the above expression, we follow the exact same steps in Lemma \ref{Convolution}. The only difference is that the term arising when $D_t(x-y)$ acts on $f(x-y)$ cancels out. Consequently, no derivatives fall on $f$, and the bound readily follows. 
For the second bound, we proceed an induction on $w$ for the expression:
\[Z=\sum C\tr D_t^{w_0}Y\otimes\bigotimes_{i\ge 1}\nabla D_t^{w_i}X,\]
where $w,w_i\ge0$ and $w_0+\sum_{i\ge 1}(w_i+1)=w$.
Noting that $[D_t,](Z\cdot\nabla)=(D_tZ-Z\cdot\nabla X)\cdot\nabla$, the proof then follows from the identity $D_t\nabla=\nabla D_t-\nabla X\cdot\nabla$.
    
\end{remark}

\appendix

\section{Appendix}

\begin{lemma}\label{Parabolic estimate}
Let $u:[0,T)\times\T^d\to\R$ be a solution to the following equation:
\begin{equation}
\begin{dcases}
\partial_tu-\nu\Delta u=f,\\
u(0,x)=0.
\end{dcases}
\end{equation}
Then, we have the following estimates:
\begin{align}
\|\partial_tu\|_{L^\infty_t\dot{C}^\alpha_x}\le C\|f\|_{L^\infty_t\dot{C}^\alpha_x},\\
\|\partial_tP_ku\|_{L^\infty_{t,x}}\le C\|P_kf\|_{L^\infty_{t,x}}.
\end{align}
The constant $C=C_{d,\alpha}$ is independent of $\nu>0$.
\begin{proof}
We first prove the Hölder estimate. We view $u$ and $f$ as $(2\pi\Z)^d$-periodic functions on the whole $\R^d$, with the $C_t\dot{C}^\alpha_x$-norm comparable.

It is equivalent to bound the term $\nu\Delta u$. By Duhamel's formula,
\[\nu\Delta u(t,x)=\lim_{\epsilon\to0}\int_\epsilon^t\int K_{\nu}(\tau,x-y)f(t-\tau,y)\,dy\,d\tau,\]
where $K_\nu(\tau,y)$ is the kernel of the operator $\nu\Delta e^{\nu \tau\Delta}$. Note that $\|K_\nu\|_{L^1_y}(\tau)=\frac{c}{\tau}$.

To estimate $\nu\Delta u(t,x+h)-\nu\Delta u(t,x)$, we split the time interval into $(0,\delta)$ and $(\delta,t)$. For the first interval, we use $\int K_v(s,x-y)\,dy=0$ to insert a difference:
\begin{equation*}
\begin{split}
&\int_{0}^{\delta}\int K_\nu(\tau,x+h-y)f(t-\tau,y)\,dy\,d\tau-\int_{0}^{\delta}\int K_\nu(\tau,x-y)f(t-\tau,y)\,dy\,d\tau\\
&=\int_{0}^{\delta}\int K_\nu(\tau,x+h-y)(f(t-\tau,y)-f(t-\tau,x+h))\,dy\,d\tau-\int_{0}^{\delta}\int K_\nu(\tau,x-y)(f(t-\tau,y)-f(t-\tau,x))\,dy\,d\tau\\
&\le 2\|f\|_{L^\infty_t\dot{C}^\alpha_x}\int_0^\delta\int|K_\nu(\tau,y)|\cdot|y|^{\alpha}\,dy\,d\tau\\
&\lesssim\|f\|_{L^\infty_t\dot{C}^{\alpha}_x}\int_0^{\delta}(\nu \tau)^{\alpha/2}\frac{d\tau}{\tau}\lesssim\|f\|_{L^\infty_t\dot{C}^{\alpha}_x}(\nu\delta)^{\alpha/2},
\end{split}
\end{equation*}
where we use the homogeneity of the kernel to calculate $\|K_{\nu}|y|^\alpha\|_{L^1_y}(\tau)=c(\nu\tau)^{\alpha/2}\tau^{-1}$.

For $(\delta,t)$, we also need some smoothness of the kernel:
\begin{equation*}
\begin{split}
&\int_{\delta}^t\int(K_{\nu}(\tau,x+h-y)-K_\nu(\tau,x-y))f(t-\tau,y)\,dy\,d\tau\\
&=\int_0^1\int_{\delta}^t\int(\nabla K_{\nu}(\tau,x+\theta h-y)\cdot h) f(t-\tau,y)\,dy\,d\tau\,d\theta\\
&=\int_0^1\int_{\delta}^t\int(\nabla K_{\nu}(\tau,x+\theta h-y)\cdot h) (f(t-\tau,y)-f(t-\tau,x+\theta h))\,dy\,d\tau\,d\theta\\
&\le \|f\|_{L^\infty_t\dot{C}^\alpha_x}|h|\int_{\delta}^t\int|\nabla K_\nu(\tau,y)|\cdot|y|^{\alpha}\,dy\,d\tau\\
&\lesssim\|f\|_{L^\infty_t\dot{C}^\alpha_x}|h|\int_{\delta}^t(\nu \tau)^{(\alpha-1)/2}\frac{d\tau}{\tau}\\
&\lesssim\|f\|_{L^\infty_t\dot{C}^\alpha_x}|h|(\nu\delta)^{(\alpha-1)/2},
\end{split}
\end{equation*}
where we use $\|\nabla K_{\nu}|y|^\alpha\|_{L^1}(\tau)=c(\nu\tau)^{(\alpha-1)/2}\tau^{-1}$.
Choosing $\delta$ such that $(\nu\delta)^{1/2}=|h|$ balances the two terms, and we obtain the bound $C\|f\|_{L^\infty_t\dot{C}^\alpha_x}|h|^{\alpha}$.

The $L^{\infty}_{t,x}$ estimate for $\partial_t P_ku$ follows readily from choosing any $0<\beta<1$ and applying the Hölder estimate:
\[\|\partial_tP_ku\|_{L^\infty_{t,x}}\lesssim_{\beta} 2^{-\beta k}\|\partial_t P_ku\|_{L^\infty_t\dot{C}^\beta_x}\lesssim 2^{-\beta k}\|P_kf\|_{L^\infty_t\dot{C}^\beta_x}\lesssim_{\beta}\|P_kf\|_{L^\infty_{t,x}}.\]
\end{proof}
\end{lemma}

\begin{lemma}\label{Parabolic estimate 2}
Denote by $e^{t\Delta}f$ the solution to the free heat equation with initial data $f$, then we have the following estimates:
\begin{align}
\|e^{t\Delta}f-f\|_{L^\infty}\lesssim t^{\frac{\alpha}{2}}\|f\|_{\dot{C}^\alpha_{*}},\label{lem:A.2.1}\\
\|e^{t\Delta}f\|_{\dot{C}^\beta_{*}}\lesssim t^{\frac{-(\beta-\gamma)}{2}}\|f\|_{\dot{C}^\gamma_{*}},\label{lem:A.2.2}
\end{align}
where $0<\alpha<2$, $\beta>\gamma>0$.
\begin{proof}
For \eqref{lem:A.2.1}, we bound
\[\|e^{t\Delta}f-f\|_{L^\infty}\le \sum_{t2^{2k}\le 1}\|(e^{t\Delta}-1)P_kf\|_{L^\infty}+\sum_{t2^{2k}\ge 1}(\|e^{t\Delta}P_kf\|_{L^\infty}+\|P_kf\|_{L^\infty})\]
When $t2^{2k}\le 1$, investigate the multiplier
\[m_k(\xi)=(e^{-t|\xi|^2}-1)\varphi(\frac{|\xi|}{2^k})\]
From $|e^{-t|\xi|^2}-1|\le Ct|\xi|^2$, and the condition $2^kt\le 2^{-k}$, we know that $\|\partial_\xi^l m_k(\xi)\|_{L^2}\lesssim t2^{2k}\cdot 2^{-|l|k}\cdot 2^{\frac{d}{2}k}$ for every multiindex $l$. Similar to the argument \eqref{eq:multiplier calculation}, we obtain
\[\|(e^{t\Delta}-1)P_kf\|_{L^\infty}\lesssim t2^{2k}\|P_kf\|_{L^\infty}.\]
From this and $\|e^{t\Delta}P_kf\|_{L^\infty}\le \|P_kf\|_{L^\infty}$, we finally obtain
\[\|e^{t\Delta}f-f\|_{L^\infty}\lesssim\sum_{2^k\le t^{-1/2}}t2^{(2-\alpha)k}\|f\|_{\dot{C}^\alpha_*}+\sum_{2^k\ge t^{-1/2}}2^{-\alpha k}\|f\|_{\dot{C}^\alpha_*}\lesssim t^{\frac{\alpha}{2}}\|f\|_{\dot{C}^\alpha_*}\]

For \eqref{lem:A.2.2}, we use the Littlewood-Paley characterization of Hölder-Zygmund norms:
\[\|e^{t\Delta}f\|_{\dot{C}^\beta_*}\sim\sup_k 2^{\beta k}\|e^{t\Delta} P_kf\|_{L^\infty}=\sup_k2^{\gamma k}\|P_{[k-2,k+2]}(\frac{|\nabla|}{2^k})^{\gamma-\beta}P_{[k-2,k+2]}|\nabla|^{\beta-\gamma}e^{t\Delta} P_kf\|_{L^\infty}.\]
Since $\|P_{[k-2,k+2]}(\frac{|\nabla|}{2^k})^{\gamma-\beta}\|\lesssim 1$ and $\|P_{[k-2,k+2]}|\nabla|^{\beta-\gamma}e^{t\Delta}\|\lesssim t^{\frac{-(\beta-\gamma)}{2}}$, we obtain
\[\|e^{t\Delta}f\|_{\dot{C}^\beta_*}\lesssim t^{\frac{-(\beta-\gamma)}{2}}\sup_k  2^{\gamma k}\|P_kf\|_{L^\infty}\sim t^{\frac{-(\beta-\gamma)}{2}}\|f\|_{\dot{C}^\gamma_*}.\]
\end{proof}
\end{lemma}

The following lemma is taken from \cite[Lemma 2.8]{BCD11}, which applies to functions with Fourier support in any annulus. Compare this with Lemma \ref{Instantaneous dissipation rate}.

\begin{lemma}\label{cp lemma}
For $p\ge 2$, and a scalar function on $\T^d$ or $\R^d$ with $\spt\hat{f}\subset\{\xi:C^{-1}R\le|\xi|\le CR\}$, then
\begin{align}
\int(-\Delta f)\cdot f |f|^{p-2}\ge \frac{c}{p}R^2\int|f|^p,
\end{align}
with $c$ depending only on $C$.
\begin{proof}

Writing $f=\Div\nabla\Delta^{-1}f$, we find
\begin{align*}
\int |f|^p&=\int\Div\nabla\Delta^{-1}f\cdot f|f|^{p-2}\\
&=-(p-1)\int\nabla\Delta^{-1}f\cdot\nabla f|f|^{p-2}\\
&\le (p-1)\|\nabla\Delta^{-1}f\|_{L^p}\cdot\|\nabla f|f|^{\frac{p-2}{2}}\|_{L^2}\cdot\||f|^{\frac{p-2}{2}}\|_{L^{\frac{2p}{p-2}}}.
\end{align*}
by Hölder's inequality and $\frac{1}{p}+\frac{1}{2}+\frac{p-2}{2p}=1$. Observe that $\|\nabla\Delta^{-1} f\|_{L^p}\lesssim_C R^{-1}\|f\|_{L^p}$ and $\||f|^{\frac{p-2}{2}}\|_{L^{\frac{2p}{p-2}}}=\|f\|_{L^p}^{\frac{p-2}{2}}$. Simplifying the expression, we obtain:
\[\int|\nabla f|^2|f|^{p-2}\ge \frac{c}{(p-1)^2}R^2\int|f|^p.\]
On the other hand, we perform an integration by parts:
\[\int(-\Delta f)\cdot f |f|^{p-2}=(p-1)\int|\nabla f|^2|f|^{p-2}.\]
Combining the formulas, we conclude the proof.
\end{proof}
\end{lemma}

If we let $c_p$ be the actual optimal constant in the estimate, it is easy to show that $\lim_{p\to\infty}c_p=0$ if $C>\sqrt{3}$ in the lemma, i.e. the annulus is too thick, so that Lemma \ref{Thin annulus lemma} and Lemma \ref{Instantaneous dissipation rate} are meaningful. Specifically, the scalar function $f(x)=\frac{1}{8}(9\cos x-\cos 3x)$ defined on $\T=\R/2\pi\Z$  satisfies $\Delta f(x_0)=0$ for any point $x_0$ that attains the $L^\infty$-norm.  Then the probability measures
\[d\mu_p=\frac{|f|^p(x)}{\|f\|^p_{L^p}}\,dx\]
has a subsequence $d\mu_{p_q}$ converging weakly as $p_q\to\infty$ to a measure $d\mu_{\infty}$ supported in the set $\{x:|f|(x)=\|f\|_{L^\infty}\}$, so that
\[\int\frac{-\Delta f}{f}\,d\mu_{p_q}\to\int\frac{-\Delta f}{f}\,d\mu_{\infty}=0.\]
We can extract such a subsequence for every subsequence of $p\to\infty$, and thus conclude that $\lim_{p\to\infty}c_p=0$.

\begin{proof}[Proof of \eqref{C.1}, \eqref{C.2}]\label{Commutator estimate proof}

We prove formula \eqref{C.1}:
\[\perm(\nabla^{c+1},D_t^w)f=\sum C\tr \nabla^{c_0+1}D_t^{w_0}f\otimes \prod_{i\ge 1}\nabla^{c_i+1}D_t^{w_i}X,\]
where $c,w,c_i,w_i\ge0$, $\sum_{i\ge0}c_i=c$, and $w_0+\sum_{i\ge 1}(w_i+1)=w$. We prove this by induction on $c+w$.

When $c+w=0$, the only term is $\nabla f$. For the inductive step, applying $\nabla$ increases one of the indices $c_i$ by $1$, so we only need to examine the effect of applying $D_t$ to the expression. We use the following identity
\[D_t\nabla^{\tilde{c}}=\nabla^{\tilde{c}}D_t-\tr\sum_{\tilde{c}_{1}+\tilde{c}_2=\tilde{c}} \nabla^{\tilde{c}_1+1} X\otimes \nabla^{\tilde{c}_2},\]
which corresponds to increasing $w_i$ by $1$ or generating a new $c_i$ without changing the sum $\sum_{i\ge 0}c_i$. For $[\nabla^{c+1},D^w_t]f$, we proceed by induction on the same formula with the condition $w_0<w$. From the identity $[\nabla^{c+1},D_t^{w}]=[\nabla^{c+1},D_t]D_t^{w-1}+D_t[\nabla^{c+1},D_t^{w-1}]$, the condition $w_0<w$ follows readily.

For \eqref{C.2}:
\[\perm(Y\cdot\nabla,D_t^w)f=\sum C\tr\nabla D_t^{w_0}f\otimes D_t^{w_1}Y\otimes\bigotimes_{i\ge 2}\nabla D_t^{w_i}X,\]
where $w,w_i\ge 0$ and $w_0+w_1+\sum_{i\ge 2}(w_i+1)=w$.
We can write
\[\perm(Y\cdot\nabla,D_t^w)f=D_t^{\tilde{w}}(Y\cdot\nabla)D_t^{w-\tilde{w}}f.\]
We proceed by induction on $\tilde{w}$. The base case $\tilde{w}=0$ is trivial. Applying $D_t$ to the expression and using $D_t\nabla=\nabla D_t-\nabla X\cdot\nabla$ concludes the induction.

\end{proof}

\textbf{Acknowledgments.}
I would like to thank my advisor, Camillo De Lellis, for invaluable guidance and insightful feedback on this manuscript. The research of the author has been supported by the National Science Foundation
under grant no. DMS-2424441 and by the Simons Initiative on the Geometry of Flows
(Grant Award ID BD-Targeted-00017375, CF).

\nocite{*}
\bibliographystyle{amsalpha}
\bibliography{bibliography}

@article{Ise23,
  author  = {Isett, Philip},
  title   = {Regularity in time along the coarse scale flow for the incompressible {E}uler equations},
  journal = {Transactions of the American Mathematical Society},
  volume  = {376},
  number  = {10},
  pages   = {6927--6987},
  year    = {2023},
  doi     = {10.1090/tran/8991}
}

@incollection{CDS12,
  author    = {Conti, Sergio and {De Lellis}, Camillo and Sz\'{e}kelyhidi, Jr., L\'{a}szl\'{o}},
  title     = {$h$-principle and rigidity for {$C^{1,\alpha}$} isometric embeddings},
  booktitle = {Nonlinear Partial Differential Equations},
  series    = {Abel Symposia},
  volume    = {7},
  pages     = {83--116},
  publisher = {Springer, Heidelberg},
  year      = {2012},
  doi       = {10.1007/978-3-642-25361-4_4}
}

@book{Tri83,
  author    = {Triebel, Hans},
  title     = {Theory of Function Spaces},
  series    = {Monographs in Mathematics},
  volume    = {78},
  publisher = {Birkh\"{a}user Verlag},
  year      = {1983},
  address   = {Basel},
  isbn      = {3-7643-1381-1}
}

@book{Tri92,
  author    = {Triebel, Hans},
  title     = {Theory of Function Spaces {II}},
  series    = {Monographs in Mathematics},
  volume    = {84},
  publisher = {Birkh\"{a}user Verlag},
  year      = {1992},
  address   = {Basel},
  isbn      = {3-7643-2639-5}
}

@book{Ste70,
  author    = {Stein, Elias M.},
  title     = {Singular Integrals and Differentiability Properties of Functions},
  series    = {Princeton Mathematical Series},
  volume    = {30},
  publisher = {Princeton University Press},
  year      = {1970},
  address   = {Princeton, NJ}
}

@article{CDR20,
  author  = {Colombo, Maria and De Rosa, Luigi},
  title   = {Regularity in time of {H}\"{o}lder solutions of {E}uler and hypodissipative {N}avier--{S}tokes equations},
  journal = {SIAM Journal on Mathematical Analysis},
  volume  = {52},
  number  = {1},
  pages   = {221--238},
  year    = {2020},
  doi     = {10.1137/19M1243750}
}

@incollection{Ise25,
  author    = {Isett, Philip},
  title     = {Regularity of Trajectories and Smooth Observables in Rough {E}uler Flows},
  booktitle = {Partial Differential Equations: Waves, Nonlinearities and Nonlocalities},
  editor    = {Ehrnstr\"{o}m, Mats and Holden, Helge and Jakobsen, Espen R.},
  series    = {Abel Symposia},
  volume    = {18},
  pages     = {185--204},
  publisher = {Springer, Cham},
  year      = {2025},
  doi       = {10.1007/978-3-031-91282-5_9}
}

@book{Fri95,
  author    = {Frisch, Uriel},
  title     = {Turbulence: The legacy of {A. N. Kolmogorov}},
  publisher = {Cambridge University Press},
  address   = {Cambridge},
  year      = {1995},
  isbn      = {978-0-521-45713-2}
}

@book{LL87,
  author    = {Landau, L. D. and Lifshitz, E. M.},
  title     = {Fluid mechanics},
  edition   = {2nd},
  series    = {Course of Theoretical Physics, Volume 6},
  publisher = {Pergamon Press},
  address   = {Oxford},
  year      = {1987}
}

@article{CDF20,
  author    = {Colombo, Maria and {De Rosa}, Luigi and Forcella, Luigi},
  title     = {Regularity results for rough solutions of the incompressible {E}uler equations via interpolation methods},
  journal   = {Nonlinearity},
  volume    = {33},
  number    = {9},
  pages     = {4818--4836},
  year      = {2020},
  doi       = {10.1088/1361-6544/ab8d54}
}

@article{CVW15,
  author    = {Constantin, Peter and Vicol, Vlad and Wu, Jiahong},
  title     = {Analyticity of {L}agrangian trajectories for well posed inviscid incompressible fluid models},
  journal   = {Advances in Mathematics},
  volume    = {285},
  pages     = {352--393},
  year      = {2015},
  doi       = {10.1016/j.aim.2015.05.019}
}

@article{WML23,
  author    = {Wang, Yanqing and Mei, Xue and Liu, Jitao},
  title     = {{H\"{o}lder} regularity in time of solutions to the generalized surface quasi-geostrophic equation},
  journal   = {Applied Mathematics Letters},
  volume    = {137},
  pages     = {108480},
  year      = {2023},
  doi       = {10.1016/j.aml.2022.108480}
}

@article{Her60,
  author  = {Herz, Carl S.},
  title   = {The spectral theory of bounded functions},
  journal = {Transactions of the American Mathematical Society},
  volume  = {94},
  number  = {2},
  pages   = {181--232},
  year    = {1960}
}

@article{Ham86,
  author  = {Hamilton, Richard S.},
  title   = {Four-manifolds with positive curvature operator},
  journal = {Journal of Differential Geometry},
  volume  = {24},
  number  = {2},
  pages   = {153--179},
  year    = {1986}
}

@book{BCD11,
  author    = {Bahouri, Hajer and Chemin, Jean-Yves and Danchin, Rapha\"{e}l},
  title     = {{F}ourier analysis and nonlinear partial differential equations},
  series    = {Grundlehren der mathematischen Wissenschaften},
  volume    = {343},
  publisher = {Springer},
  address   = {Berlin, Heidelberg},
  year      = {2011},
  doi       = {10.1007/978-3-642-16830-7}
}

@article{CET94,
  author  = {Constantin, Peter and E, Weinan and Titi, Edriss S.},
  title   = {{O}nsager's conjecture on the energy conservation for solutions of {E}uler's equation},
  journal = {Communications in Mathematical Physics},
  volume  = {165},
  number  = {1},
  pages   = {207--209},
  year    = {1994},
  doi     = {10.1007/BF02099744}
}

@article{Kol41,
  author  = {Kolmogorov, A. N.},
  title   = {The local structure of turbulence in incompressible viscous fluid for very large {R}eynolds numbers},
  journal = {Doklady Akademii Nauk SSSR},
  volume  = {30},
  number  = {4},
  pages   = {299--303},
  year    = {1941}
}

@article{DS12,
  author  = {{De Lellis}, Camillo and Sz\'{e}kelyhidi, Jr., L\'{a}szl\'{o}},
  title   = {Dissipative continuous {E}uler flows},
  journal = {Inventiones Mathematicae},
  volume  = {193},
  number  = {2},
  pages   = {377--407},
  year    = {2012},
  doi     = {10.1007/s00222-012-0429-9}
}

@article{Ise18,
  author  = {Isett, Philip},
  title   = {A proof of {O}nsager's conjecture},
  journal = {Annals of Mathematics},
  volume  = {188},
  number  = {3},
  pages   = {871--963},
  year    = {2018},
  doi     = {10.4007/annals.2018.188.3.4}
}

\end{document}